\numberwithin{equation}{section}
\newtheorem{theorem}{Theorem}[section]
\newtheorem{lemma}[theorem]{Lemma}
\newtheorem{corollary}[theorem]{Corollary}
\newtheorem{conjecture}[theorem]{Conjecture}
\newtheorem{proposition}[theorem]{Proposition}
\theoremstyle{definition}
\newtheorem{definition}[theorem]{Definition}
\newtheorem{remark}[theorem]{Remark}
\newcommand{\Extend}[5]{\ext@arrow0099{\arrowfill@#1#2#3}{#4}{#5}}
\DeclareMathOperator{\sys}{sys}
\definecolor{yuchencolor}{RGB}{0, 100, 0} 
\begin{document}
\title[Mass-capacity inequality]{Mass-capacity inequality modeled on conformally flat manifolds}

\author{Yuchen Bi}
\address[Yuchen Bi]{Mathematical Institute, Department of Pure Mathematics, University of Freiburg, Ernst-Zermelo-Stra{\ss}e 1, D-79104 Freiburg im Breisgau, Germany}
\email{yuchen.bi@math.uni-freiburg.de}

\author{Jintian Zhu}
\address[Jintian Zhu]{Institute for Theoretical Sciences, Westlake University, 600 Dunyu Road, 310030, Hangzhou, Zhejiang, People's Republic of China}
\email{zhujintian@westlake.edu.cn}

\begin{abstract}
In the spin case, we can establish a mass-capacity inequality for generalized asymptotically flat manifolds $(M,g,E)$ with nonnegative scalar curvature, where the equality implies that $(M,g)$ is harmonically conformal to $\mathbb R^n\setminus S$ for a closed bounded subset $S$ of $\mathbb R^n$ with Hausdorff dimension no greater than $\frac{n-2}{2}$.
\end{abstract}

\maketitle

\section{Introduction}

The investigation of mass constitutes a central topic in  general relativity. Notably, the basic concept of mass for isolated gravitational systems was first formulated by Arnowitt-Deser-Misner \cite{Arnowitt-Deser-Misner}, with such systems modeled as asymptotically flat manifolds. Later, Bartnik \cite{Bartnik1986} and also Chru\'sciel \cite{Chru86} showed that the mass is indeed a geometric quantity of asymptotically flat manifolds. We recall the definition of asymptotically flat manifolds and mass as follows.
\begin{definition}
    A complete Riemannian $n$-manifold $(M,g)$ with dimension $n\geq 3$ is asymptotically flat if there is a compact subset $K\subset M$ such that
\begin{itemize}
\item[(i)] the complement $M-K$ consists of finitely many ends $\{E_l\}_{l=1}^k$, where each end $E_l$ is diffeomorphic to $\mathbb R^n-\bar B_1$ with $$\bar B_1=\{x\in \mathbb R^n:|x|\leq 1\},$$
\item[(ii)]  the metric $g$ on each end $E_l$ has the expression $g=g_{ij}\mathrm dx_i\otimes\mathrm dx_j$ in the Euclidean coordinate chart, where the metric components satisfy the decay condition
\begin{equation*}\label{Eq: decay}
|g_{ij}-\delta_{ij}|+|x||\partial g_{ij}|+|x|^2|\partial^2 g_{ij}|=O\left(|x|^{-\tau}\right),
\end{equation*}
as $x\to\infty$, where $\tau$ is a positive constant greater than $\frac{n-2}{2}$,
\item[(iii)] the scalar curvature $R(g)$ belongs to $L^1(M,g)$.
\end{itemize}
For convenience, each end $E_l$ will be called an asymptotically flat end of $(M,g)$.
\end{definition}

\begin{definition}
Let $(M,g)$ be an asymptotically flat manifold and $E$ be an asymptotically flat end of $(M,g)$.
    The mass of the end $E$ is defined as
    $$m(M,g,E)=\frac{1}{2n(n-1)\omega_n}\lim_{\rho\to+\infty}\int_{S_\rho}(\partial_j g_{ij}-\partial_i g_{jj})\frac{x_i}{|x|}\,\mathrm d\sigma,$$
    where $\omega_n$ denotes the volume of the Euclidean unit ball $B_1^n$, $S_\rho$ denotes the coordinate sphere $\{|x|=\rho\}$, and $\mathrm d\sigma$ is the Euclidean area element of $S_\rho$.
\end{definition}

Many important works have revealed the source of mass. The positive mass conjecture, with the view that nonnegative energy density contributes to nonnegative mass, states that asymptotically flat manifolds with nonnegative scalar curvature have nonnegative mass on each asymptotically flat end, where the mass vanishes on one end if and only if it is the Euclidean space. This conjecture was finally proved by Schoen-Yau \cite{SY79,SY17} with the minimal surface method, and also by Witten \cite{Witten81} with the harmonic spinor method under the extra assumption that the asymptotically flat manifold is spin. Recently, the three-dimensional positive mass theorem was given new proofs by Bray-Kazaras-Khuri-Stern \cite{BKKS22} using level-set method and also by Agostiniani-Mazzieri-Oronzio \cite{AMO24} using potential method. With black holes taken into consideration, the Penrose inequality states that if an asymptotically flat manifold with only one asymptotically flat end $E$ has nonnegative scalar curvature and the boundary is an outer-minimizing horizon $\Sigma$, then we have the inequality
$$
m(M,g, E)\geq \frac{1}{2}\left(\frac{|\Sigma|_g}{|\mathbb S^{n-1}|}\right)^{\frac{n-2}{n-1}},
$$
where the equality holds if and only if $(M,g)$ is isometric to the half spatial Schwarzschild manifold
$(M_m,g_m)$ with
$$M_m=\mathbb R^n\setminus B_{m/2}\mbox{ and }g_m=\left(1+\frac{m}{2}|x|^{2-n}\right)^{\frac{4}{n-2}}g_{euc}.$$
The Penrose inequality in dimension three was first proved independently by Huisken-Ilmanen \cite{HI01} and Bray \cite{Bray01} using different flow methods. Notice that the potential approach  developed by Agostiniani-Mazzieri-Oronzio in \cite{AMO24} also works for the three-dimensional Penrose ineqality. Based on Bray's original proof, Bray-Lee \cite{BL09} removed the technical use of the Gauss-Bonnet formula  and generalized the Penrose inequality up to dimension seven. In order to prove the Penrose inequality, Bray developed a mass-capacity inequality
$$ m(M,g,E)\geq \mathfrak c(M,g,\Sigma),$$
where $\mathfrak c(M,g,\Sigma)$ denotes the capacity of $\Sigma$ in $(M,g)$. For more works related to mass-capacity inequalities for asymptotically flat manifolds, the audience can refer to the works \cite{BM08, Miao24, Miao25} and the references therein.

The study of mass is closely related to conformal geometry. Based upon the fundamental work of Trudinger \cite{Trudinger68} and Aubin \cite{Aubin76}, Schoen ultimately resolved the Yamabe problem by leveraging the positive mass theorem. As another link between conformal geometry and general relativity, Schoen-Yau \cite{Schoen-Yau} reduced the Liouville theorem in conformal geometry to a consequence of the positive mass theorem for a class of generalized asymptotically flat manifolds, which was recently established in the works \cite{Lesourd-Unger-Yau,LLU23, Zhu23,BC2005,CZ2024}.

In this paper, we continue the study of mass inequalities for generalized asymptotically flat manifolds, which are initiated from Schoen-Yau's work \cite{Schoen-Yau}. Such manifolds  are also well-known in \cite{Lesourd-Unger-Yau} as {\it asymptotically flat manifolds with arbitrary ends} with the following precise definition.
\begin{definition}
    A triple $(M,g,E)$ is said to be a generalized asymptotically flat manifold if 
    \begin{itemize}
        \item $(M,g)$ is a complete Riemannian $n$-manifold {\it without boundary}, whose dimension $n$ is no less than three;
        \item $E$ is an asymptotically flat end of $(M,g)$;
\item the scalar curvature $R(g)$ belongs to $L^1(E,g)$.
    \end{itemize}
\end{definition}

The positive mass theorem for generalized asymptotically flat manifolds was systematically researched in \cite{Lesourd-Unger-Yau,LLU23, Zhu23,BC2005,CZ2024}. Soon after, the Penrose inequality for generalized asymptotically flat manifolds was initiated by the second-named author \cite{Zhu2024} to include spatial slices of the {\it extreme} Reissner-Nordstr\"om spacetime into application, where the ultimate goal is to show the following {\it mass-systole conjecture}:
\begin{conjecture}
Let $(M,g, E)$ be a generalized asymptotically flat manifold with nonnegative scalar curvature. Then we have
$$
m(M,g, E)\geq \frac{1}{2}\left(\frac{\sys(M,g,E)}{|\mathbb S^{n-1}|}\right)^{\frac{n-2}{n-1}},
$$
where 
\begin{equation*}
\sys(M,g,E)=\inf\left\{|\Sigma|_g:\left.
\begin{array}{c}\text{$\Sigma$ is a smoothly embedded hypersurface }\\
\text{homologous to $\partial E$ in $M$}
\end{array}\right.\right\}.
\end{equation*}
Moreover, the equality holds if and only if
\begin{itemize}
    \item $(M,g)$ is isometric to the Euclidean space;
    \item or there is a strictly outer-minimizing minimal $(n-1)$-sphere $\Sigma_h$ homologous to $\partial E$ such that the region outside $\Sigma_h$ is isometric to the half spatial Schwarzschild manifold with mass $m>0$.
\end{itemize}
\end{conjecture}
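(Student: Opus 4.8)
The plan is to reduce the mass--systole inequality to a Penrose-type inequality for the exterior of a systole-realizing minimal hypersurface, to prove that inequality by feeding the mass--capacity inequality of this paper into a capacity--area estimate, and to obtain the rigidity by propagating equality through both steps. First, dispose of the degenerate case: if $\partial E$ is null-homologous in $M$, equivalently $\sys(M,g,E)=0$, the inequality reads $m(M,g,E)\ge0$, which is the positive mass theorem for generalized asymptotically flat manifolds (\cite{Lesourd-Unger-Yau,LLU23,Zhu23,BC2005,CZ2024}); its rigidity supplies the Euclidean alternative. So assume $s:=\sys(M,g,E)>0$. Take an area-minimizing sequence of hypersurfaces homologous to $\partial E$ with uniformly bounded area; since the coordinate spheres of $E$ have area $\sim|\mathbb S^{n-1}|\rho^{n-1}$, the sequence is confined away from the end $E$, and by geometric measure theory in dimension $n\le7$ --- and by the filling / $\mu$-bubble regularity theory of \cite{Zhu23,CZ2024} in general --- it produces a smooth embedded area-minimizer $\Sigma_h$ realizing $s$, which is automatically minimal and outer-minimizing. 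Cut $M$ along $\Sigma_h$ and let $M_{\mathrm{ext}}$ be the piece containing $E$: it is a generalized asymptotically flat manifold with compact minimal (horizon) boundary $\Sigma_h$, still with $R(g)\ge0$, still spin, and $m(M_{\mathrm{ext}},g,E)=m(M,g,E)$, $|\Sigma_h|_g=s$.

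The heart of the matter is the Penrose-type inequality
\[
m(M_{\mathrm{ext}},g,E)\ \ge\ \mathfrak c(M_{\mathrm{ext}},g,\Sigma_h)\ \ge\ \frac12\left(\frac{|\Sigma_h|_g}{|\mathbb S^{n-1}|}\right)^{\frac{n-2}{n-1}}.
\]
The first inequality is the mass--capacity inequality of this paper, applied to manifolds with minimal boundary (or, after doubling $M_{\mathrm{ext}}$ across $\Sigma_h$ and smoothing while keeping $R\ge0$, reduced to the boundaryless case and matched with $\mathfrak c(M_{\mathrm{ext}},g,\Sigma_h)$ through the reflection symmetry of the capacitary potential). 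The second is the sharp capacity--area inequality in the spirit of Bray--Miao \cite{BM08} and its higher-dimensional nonlinear-potential-theoretic form: along the level sets of the capacitary potential $\phi$ (with $\phi=0$ on $\Sigma_h$ and $\phi\to1$ at infinity of $E$), nonnegativity of the scalar curvature makes a suitable area--potential quantity monotone, its values at $\Sigma_h$ and at infinity being $|\Sigma_h|_g$ (using minimality of $\Sigma_h$) and the capacity, respectively. Concatenating with $m(M_{\mathrm{ext}},g,E)=m(M,g,E)$ and $|\Sigma_h|_g=s$ gives the mass--systole inequality.

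For the rigidity, equality forces equality in each step above. Equality in the mass--capacity inequality falls under the equality case established in this paper: $(M_{\mathrm{ext}},g)$ is harmonically conformal to $\mathbb R^n\setminus S$ for a closed bounded $S$ of Hausdorff dimension at most $\tfrac{n-2}{2}$, and $R(g)\equiv0$. If $S=\varnothing$ then $(M_{\mathrm{ext}},g)$ is Euclidean, which has no compact minimal hypersurface, so $\Sigma_h=\varnothing$, contradicting $s>0$; hence $S\ne\varnothing$. Equality in the capacity--area inequality is rigid --- it forces the level sets of $\phi$ to be concentric round spheres and the metric rotationally symmetric outside $\Sigma_h$ --- which pins the harmonic conformal factor to the Schwarzschild profile $1+\tfrac{m}{2}|x|^{2-n}$. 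Therefore the region of $(M,g)$ outside $\Sigma_h$ is isometric to the half spatial Schwarzschild manifold of mass $m>0$, $\Sigma_h$ is its strictly outer-minimizing minimal $(n-1)$-sphere, and this is exactly the second equality alternative of the conjecture.

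The step I expect to be the main obstacle is the Penrose-type inequality in the generalized setting, and within it the capacity--area estimate: its proof rests on a monotonicity formula along the level sets of the capacitary potential of $\Sigma_h$ in $M_{\mathrm{ext}}$, and $M_{\mathrm{ext}}$ may carry cylindrical ends, cusps, or further asymptotically flat ends at which the potential and the monotone quantity must be controlled so that no spurious boundary term appears --- precisely the difficulty that makes the generalized Penrose inequality of \cite{Zhu2024} far more delicate than its classical counterpart, and the reason the statement above is at present only a conjecture. Subsidiary obstacles are the doubling reduction (checking that the boundaryless mass--capacity inequality of this paper applies to the smoothed double and that its capacity matches $\mathfrak c(M_{\mathrm{ext}},g,\Sigma_h)$ with the correct constant) and the attainment of the systole: if minimizing sequences escape to infinity along a cylindrical end with cross-section $N$, so that $s=|N|_g$ is not realized by a compact hypersurface, one must instead prove $m(M,g,E)\ge\frac12(|N|_g/|\mathbb S^{n-1}|)^{\frac{n-2}{n-1}}$ directly, which should follow by applying the mass--capacity inequality across large coordinate spheres and passing to the limit.
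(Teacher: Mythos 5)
This statement is stated in the paper as an open conjecture (known only for $M\cong\mathbb R^3\setminus\{O\}$ by \cite{Zhu2024}); the paper offers no proof of it, and your proposal does not close the gap either. You correctly flag the Penrose-type step as the main obstacle, but the difficulty is sharper than the analytic issues you list: the second link in your chain, the ``capacity--area'' lower bound
$$\mathfrak c(M_{\mathrm{ext}},g,\Sigma_h)\ \geq\ \frac12\left(\frac{|\Sigma_h|_g}{|\mathbb S^{n-1}|}\right)^{\frac{n-2}{n-1}},$$
is simply false for area-minimizing minimal boundaries in manifolds with $R\geq 0$. The results of Bray--Miao \cite{BM08} that you invoke are capacity \emph{upper} bounds in terms of area (this is precisely why Bray's proof of the Penrose inequality needs a conformal flow rather than the concatenation $m\geq\mathfrak c\geq\frac12(|\Sigma|/|\mathbb S^{n-1}|)^{(n-2)/(n-1)}$). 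A concrete counterexample to your inequality: attach a long round product cylinder $\mathbb S^{n-1}\times[0,L]$ (which has $R=R_{\mathbb S^{n-1}}>0$ and totally geodesic, hence minimal and area-minimizing, cross-sections of fixed area $A$) to an asymptotically flat cap with $R\geq 0$. Testing the capacity of the far cross-section with a linear ramp along the cylinder gives Dirichlet energy $A/L\to 0$, so its capacity tends to $0$ while the right-hand side stays fixed at $\frac12(A/|\mathbb S^{n-1}|)^{(n-2)/(n-1)}>0$. The extreme Reissner--Nordstr\"om slice exhibits the same phenomenon exactly: the mass--capacity inequality degenerates to $m\geq 0$ there while the mass--systole inequality is a nontrivial strict bound. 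So no monotonicity formula along level sets of the capacitary potential can rescue this step; the systole genuinely carries more information than the capacity, and the reduction you propose cannot work.

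There are secondary gaps as well: cutting $M$ along a hypersurface homologous to $\partial E$ need not produce a piece $M_{\mathrm{ext}}$ with only the end $E$ and the compact boundary $\Sigma_h$ (the region between $\Sigma_h$ and $\partial E$ can itself contain arbitrary ends), so Corollary \ref{Cor: Bray} does not apply and one must work with the corner version Theorem \ref{Thm: main2} on a double whose capacity no longer obviously equals $\frac12\mathfrak c(M_{\mathrm{ext}},g,\Sigma_h)$ by reflection, since the reflection swaps the two copies of $E$. And the existence of a smooth compact area-minimizer realizing $\sys(M,g,E)$ fails in exactly the model cases (minimizing sequences escaping down a cylindrical end), where your fallback argument again reduces to the capacity of far cross-sections, which vanishes. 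In short, the approach of deriving the mass--systole inequality from the mass--capacity inequality is structurally blocked, which is consistent with the statement remaining a conjecture in the paper.
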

The conjecture is now only known to be true when $M$ has the underlying topology $\mathbb R^3\setminus\{O\}$ concerning the work \cite{Zhu2024}. As a further exploration, we are going to consider the mass-capacity inequality for generalized asymptotically flat manifolds.

To present our theorem, we need to introduce the {\it capacity} for generalized asymptotically flat manifolds. 
Given any constant $s>1$ we shall use $E_s$ to  denote the open subset of $E$ diffeomorphic to $\mathbb R^n\setminus \bar B_s$. 
Let $\eta$ denote a fixed smooth function satisfying $\eta\equiv 0$ in $E_2$ and $\eta\equiv 1$ outside $E_1$. 
\begin{definition}\label{def:capacity}
    The capacity of a generalized asymptotically flat manifold $(M,g,E)$ is defined to be
    $$\mathfrak c(M,g,E)=\inf_{\phi\in\mathcal C}\frac{1}{n(n-2)\omega_n}\int_M|\nabla_g\phi|^2\,\mathrm d\mu_g,$$
    where  $\mathcal C$ denotes the collection of all smooth functions $\phi$ such that $\phi-\eta$ has compact support.
\end{definition}
\begin{remark}
  It is a standard fact that we can relax the test function $\phi$ to be any function in $W^{1,2}_{loc}(M,g)$ such that $\phi-\eta$ has compact support.
\end{remark}

In this paper, we are going to prove the following theorem.
\begin{theorem}\label{Thm: main1}
    Assume that $(M^n,g,E)$ is a spin generalized asymptotically flat manifold with nonnegative scalar curvature. Then we have the following mass-capacity inequality
    $$m(M,g,E)\geq 2\mathfrak c(M,g,E).$$
    If the equality holds, then $(M,g)$ is harmonically conformal  to $\mathbb R^n\setminus S$, where $S$ is a bounded closed subset of $\mathbb R^n$ with Hausdorff dimension no greater than $\frac{n-2}{2}$. In particular, we have
    $$\pi_i(M)=0\mbox{ for all }1\leq i\leq n-1- \left\lfloor\frac{n}{2}\right\rfloor.$$
\end{theorem}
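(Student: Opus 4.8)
\emph{Proof strategy.} The plan is to deduce the inequality from the spin positive mass theorem for generalized asymptotically flat manifolds, applied not to $(M,g)$ itself but to a conformal metric built out of the capacity potential, and to read off the rigidity from the equality case of that positive mass theorem.

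\textbf{Step 1 (the capacity potential).} First I would let $u$ realize the infimum in Definition \ref{def:capacity}. By the Remark it may be taken in $W^{1,2}_{loc}(M,g)$; the admissible variations are compactly supported, so $u$ is weakly --- hence smoothly --- harmonic on all of $M$, and replacing $u$ by its truncation to $[0,1]$ neither raises the energy nor destroys admissibility, so $0\le u\le 1$, whence $1-u>0$ on $M$ by the strong maximum principle. On the end $E$ one has an expansion $u=A|x|^{2-n}+o(|x|^{2-n})$ with matching derivative bounds (elliptic regularity together with the asymptotically flat structure), and integrating $\operatorname{div}_g\big((1-u)\nabla_g u\big)$ over $M$ --- using $\Delta_g u=0$ and that the flux-type boundary terms along the other ends, where $1-u\to 0$, drop out --- gives $\int_M|\nabla_g u|^2\,\mathrm d\mu_g=n(n-2)\omega_n A$. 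Comparing with Definition \ref{def:capacity} identifies $A=\mathfrak c(M,g,E)$. Thus $w:=1-u$ is a positive $g$-harmonic function with $w\to 1$ on $E$, $w\to 0$ along the remaining ends, and $w=1-\mathfrak c\,|x|^{2-n}+o(|x|^{2-n})$ on $E$.

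\textbf{Step 2 (the conformal metric).} Next I would pass to $\hat g:=w^{\frac{4}{n-2}}g$, a smooth metric on $M$ since $w>0$. As $\Delta_g w=-\Delta_g u=0$, the conformal transformation law for scalar curvature gives $R(\hat g)=w^{-\frac{4}{n-2}}R(g)\ge 0$. From the expansion of $w$, the metric $\hat g$ is again asymptotically flat on $E$ with $R(\hat g)\in L^1(E,\hat g)$, and $w^{\frac{4}{n-2}}=1-\tfrac{4\mathfrak c}{n-2}|x|^{2-n}+o(|x|^{2-n})$; the standard conformal change of mass computation (legitimate because $\tau>\frac{n-2}{2}$) then yields
\[
m(\hat g,E)=m(M,g,E)-2\,\mathfrak c(M,g,E).
\]

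\textbf{Step 3 (the spinorial inequality).} The core is a Witten-type identity for $\hat g$: solving $D_{\hat g}\psi=0$ with $\psi$ asymptotic to a constant spinor $\psi_0$ on $E$, the Lichnerowicz--Weitzenb\"ock formula together with integration by parts formally gives $\int_M\big(|\nabla^{\hat g}\psi|^2+\tfrac14R(\hat g)|\psi|^2\big)\,\mathrm d\mu_{\hat g}=c_n\,m(\hat g,E)\,|\psi_0|^2$ for a positive dimensional constant $c_n$, so $R(\hat g)\ge 0$ forces $m(\hat g,E)\ge 0$, i.e.\ $m(M,g,E)\ge 2\mathfrak c(M,g,E)$. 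The obstacle --- and what I expect to be the main technical point --- is that $\hat g$ need not be complete: along the other ends of $(M,g)$ the factor $w\to 0$, so $\hat g$ collapses there (already for a spatial Schwarzschild metric, $\hat g$ is the flat metric on $\mathbb R^n\setminus\{O\}$), and the positive mass theorems of \cite{Lesourd-Unger-Yau,LLU23,Zhu23,BC2005,CZ2024} cannot be quoted verbatim. To get around this I would not change the underlying manifold at all, but transplant the computation to the genuinely complete $(M,g)$: by the conformal covariance of the Dirac operator a $\hat g$-harmonic spinor corresponds to the $g$-harmonic spinor $\Phi=w^{\frac{n-1}{n-2}}\psi$, whose existence and appropriate decay along the (complete, arbitrary) ends of $(M,g)$ are supplied by the spin positive mass theorem machinery for arbitrary ends in the cited works; rewriting the identity above purely in terms of $\Phi$, $g$ and the harmonic weight $w$, the only new thing to verify is that the boundary integrals generated at the ends where $w\to 0$ vanish in the limit. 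This I expect to handle by a cut-off exhaustion of $M$ together with the fact that $w$ is a positive $g$-harmonic function there --- so that $w$ and $|\nabla_g w|$ degenerate at a controlled, comparable rate which, matched with the conformally linked decay of $\Phi$, kills the boundary terms. The resulting weighted identity exhibits $m(M,g,E)-2\mathfrak c(M,g,E)$ as a nonnegative integral and isolates the case of equality.

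\textbf{Step 4 (rigidity and topology).} Finally, assume $m(M,g,E)=2\mathfrak c(M,g,E)$. Then the identity forces $\nabla^{\hat g}\psi\equiv 0$ for the spinor associated to \emph{every} boundary value $\psi_0$, so $\hat g$ carries a full space of parallel spinors and is therefore flat; in particular $R(\hat g)=0$, hence $R(g)=w^{\frac{4}{n-2}}R(\hat g)=0$. Because $E$ is a standard Euclidean end, developing the flat structure of $(M,\hat g)$ identifies it isometrically with $(\mathbb R^n\setminus S,g_{euc})$ for a closed set $S$, which is bounded since the complement of a neighbourhood of $\infty_E$ is carried into a bounded region of $\mathbb R^n$. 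Pulling $g$ back through this identification gives $g=w^{-\frac{4}{n-2}}g_{euc}$, and $w^{-1}$ is a positive function that is harmonic for $g_{euc}$ (this is precisely $\Delta_g w=0$ read through the conformal change); hence $(M,g)$ is harmonically conformal to $\mathbb R^n\setminus S$. Completeness of $(M,g)$ near $S$ means the harmonic conformal factor $w^{-1}$ blows up fast enough along $S$ to make $w^{-\frac{4}{n-2}}g_{euc}$ complete there, and the sharp dimensional threshold for this --- exactly the conformal/Liouville-type analysis underlying the cited references --- forces $\dim_{\mathcal H}S\le\frac{n-2}{2}$. Since $\dim_{\mathcal H}S\le\frac{n-2}{2}$, any continuous map $S^i\to M=\mathbb R^n\setminus S$ together with a null-homotopy of it in $\mathbb R^n$ --- an object of dimension $i+1$ --- can be perturbed off $S$ whenever $(i+1)+\frac{n-2}{2}<n$, i.e.\ for all $i\le n-1-\lfloor n/2\rfloor$, so $\pi_i(M)=\pi_i(\mathbb R^n)=0$ in that range, completing the proof.
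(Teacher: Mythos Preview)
Your overall architecture matches the paper's: build the capacity potential, conformally change $g$ by it, read off the mass shift, invoke a Witten-type argument, and then analyse the flat limit. But two of the load-bearing steps are asserted rather than proved, and the paper handles both by a device you do not use.

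\textbf{The inequality.} You correctly identify that $\hat g=w^{4/(n-2)}g$ can be incomplete (it genuinely is whenever $M\setminus E$ is unbounded), so the spin positive mass theorems of \cite{BC2005,CZ2024} do not apply to $\hat g$ as stated. Your workaround---transplant the Lichnerowicz identity to $(M,g)$ via conformal covariance and argue that the boundary contributions at the ends where $w\to 0$ vanish---is only sketched (``This I expect to handle\ldots''). Concretely, $\psi=w^{-(n-1)/(n-2)}\Phi$ blows up where $w\to 0$, so both the bulk terms $\int|\nabla^{\hat g}\psi|^2\,\mathrm d\mu_{\hat g}$ and the boundary flux require a quantitative matching between the decay of the Callias spinor $\Phi$ and the vanishing rate of $w$ along the arbitrary ends; nothing in the cited machinery supplies this directly. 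The paper sidesteps the issue entirely: it sets $u_\varepsilon=(u+\varepsilon)/(1+\varepsilon)$ (your $w$ is the paper's $u$) and works with $g_\varepsilon=u_\varepsilon^{4/(n-2)}g$, which is \emph{complete} since $u_\varepsilon\ge \varepsilon/(1+\varepsilon)>0$. Then the existing spin PMT for arbitrary ends applies verbatim to $(M,g_\varepsilon,E)$, giving $m(M,g,E)\ge 2(1+\varepsilon)^{-1}\mathfrak c(M,g,E)$; let $\varepsilon\to 0$. The same $g_\varepsilon$-approximation is then reused to produce the parallel spinors on $(M,\bar g)$ in the rigidity step (their Lemmas~3.3--3.4), again avoiding any direct analysis on the incomplete metric.

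\textbf{Rigidity.} Your Step~4 jumps from ``$(M,\hat g)$ is flat'' to ``$(M,\hat g)\cong(\mathbb R^n\setminus S,g_{euc})$''. Two nontrivial ingredients are missing. First, a flat, possibly incomplete manifold has a global developing map to $\mathbb R^n$ only once you know it is simply connected; the paper proves $\pi_1(M)=0$ by a separate capacity argument (pass to the universal cover, observe the capacity strictly increases because the lifted potential is strictly smaller on a second copy of $E$, contradicting the equality $m=2\mathfrak c$). Second, even with a global developing map $\Psi:(M,\hat g)\to\mathbb R^n$, injectivity is not automatic; the paper obtains it by composing with stereographic projection to get a conformal map $(M,g)\to\mathbb S^n$ and invoking the Liouville theorem of Schoen--Yau. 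Both steps are essential and absent from your outline. The Hausdorff-dimension bound is then quoted from Karakhanyan's characterisation of complete scalar-flat conformal metrics, which makes precise the ``sharp dimensional threshold'' you allude to.
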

\begin{remark}
In the non-spin case, the mass-capacity inequality is still true from the exactly same proof assuming the validity of the positive mass theorem. However, the rigidity seems out of reach with current techniques due to the issue of incompleteness.
\end{remark}

To include previous Bray's mass-capacity inequality as a special case, we also extend our main theorem to the setting of Riemannian manifolds with corner defined as follows.
\begin{definition}
    A triple $(M,g,\Sigma)$ will be called a Riemannian manifold with corner if $M$ is a smooth manifold equipped with a continuous metric $g$, and $\Sigma$ is a closed separating smooth hypersurface $\Sigma$ in $M$ such that the metric $g$ is actually smooth to boundary in the closure of every component of $M\setminus \Sigma$. For convenience, $\Sigma$ will be called the corner of $(M,g)$.
\end{definition}
In particular, we can consider generalized asymptotically flat manifolds with corner.
\begin{definition}
    A generalized asymptotically flat manifold $(M,g,E)$ will be called a generalized asymptotically flat manifold with corner, if there exists a closed separating smooth hypersurface $\Sigma$ such that $(M,g,\Sigma)$ is a Riemannian manifold with corner.
\end{definition}

Throughout this paper, we always take the convention that $\mathbb S^2$ has mean curvature two in $\mathbb R^3$ with respect to the outward unit normal. With extra mean-curvature condition along the corner, we can prove
\begin{theorem}\label{Thm: main2}
  Assume that $(M^n,g,E)$ is a spin generalized asymptotically flat manifold with corner, which has nonnegative scalar curvature. Assume that the sum of the mean curvatures of the corner on both sides with respect to the outward unit normal is nonnegative.  Then we have the following mass-capacity inequality
    $$m(M,g,E)\geq 2\mathfrak c(M,g,E).$$
    If the equality holds, then $(M,g)$ is smooth and harmonically conformal  to $\mathbb R^n\setminus S$, where $S$ is a bounded closed subset of $\mathbb R^n$ with Hausdorff dimension no greater than $\frac{n-2}{2}$. In particular, we have
    $$\pi_i(M)=0\mbox{ for all }1\leq i\leq n-1- \left\lfloor\frac{n}{2}\right\rfloor.$$
\end{theorem}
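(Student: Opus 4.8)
The plan is to extend the proof of Theorem~\ref{Thm: main1} while keeping track of the geometric term produced by the corner. Let $u$ be the capacity potential of $(M,g,E)$, i.e. the minimizer attaining the infimum in Definition~\ref{def:capacity}; it is harmonic on each side of $\Sigma$, lies in $W^{1,2}_{loc}(M)$, and, since $g$ is continuous across $\Sigma$ (so the coefficients $\sqrt{\det g}\,g^{ij}$ are continuous there while smooth on each side), standard transmission regularity gives $u\in C^{1}$ across $\Sigma$, with $0\le u<1$ and $u=\mathfrak c(M,g,E)\,|x|^{2-n}+o(|x|^{2-n})$ along $E$. Set $\bar g=(1-u)^{\frac4{n-2}}g$. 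Because $1-u$ is harmonic on each side, $R(\bar g)\ge 0$ off $\Sigma$; because $1-u$ is $C^1$ across $\Sigma$ the normal-derivative contributions in the conformal change of the mean curvature cancel, so the sum of the mean curvatures of $\Sigma$ in $\bar g$ equals $(1-u)^{-\frac2{n-2}}$ times the corresponding sum in $g$ and is again nonnegative; and the expansion of $u$ gives $m(M,\bar g,E)=m(M,g,E)-2\mathfrak c(M,g,E)$. Thus $(M,\bar g,\Sigma)$ is again a generalized asymptotically flat manifold with corner, with $R(\bar g)\ge 0$ off $\Sigma$ and nonnegative mean-curvature sum, and it suffices to show its mass at $E$ is nonnegative.

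For that I would run Witten's argument allowing the corner. Solving the conformal Dirac equation on the complete manifold $(M,g)$ with the spinor asymptotic to a fixed constant spinor $\psi_0$ along $E$, and with the decay near the arbitrary ends dictated by the weight $1-u$ — exactly the spinor construction underlying the positive mass theorem with arbitrary ends \cite{BC2005,Lesourd-Unger-Yau} — produces $\bar\psi$ on $(M,\bar g)$ with $\bar D_{\bar g}\bar\psi=0$ and $|\bar\psi|\to|\psi_0|$ at $E$. Integrating the Lichnerowicz--Weitzenb\"ock identity over $M$ and splitting the integration at $\Sigma$, the contributions of the hypersurface Dirac operators from the two sides cancel (they carry opposite normals), and what remains is
\[
c_n\big(m(M,g,E)-2\mathfrak c(M,g,E)\big)|\psi_0|^2=\int_{M\setminus\Sigma}\Big(|\bar\nabla\bar\psi|^2+\tfrac14R(\bar g)|\bar\psi|^2\Big)\,\mathrm d\mu_{\bar g}+\tfrac12\int_\Sigma\big(H^{\bar g}_++H^{\bar g}_-\big)|\bar\psi|^2\,\mathrm d\sigma_{\bar g},
\]
the formal flux through the arbitrary ends vanishing by the chosen decay of $\bar\psi$. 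All three terms on the right are nonnegative, so $m(M,g,E)\ge 2\mathfrak c(M,g,E)$. (For the inequality alone one may instead mollify $g$ in a compact neighborhood of $\Sigma$ using the mean-curvature hypothesis, restore $R\ge0$ by a small conformal change, apply Theorem~\ref{Thm: main1}, and pass to the limit using the $C^0$-continuity of the capacity; but the corner version of Witten's argument is what yields rigidity.)

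For rigidity, assume $m(M,g,E)=2\mathfrak c(M,g,E)$. Then for every $\psi_0$ all three nonnegative terms vanish: the bulk term forces $\bar\nabla\bar\psi\equiv0$ on each component of $M\setminus\Sigma$, so each component carries a full-rank space of $\bar g$-parallel spinors and is flat (using the asymptotic flatness on the component containing $E$); and the corner term forces $H^{\bar g}_++H^{\bar g}_-\equiv0$, hence $H^g_++H^g_-\equiv0$, on $\Sigma$, since a parallel spinor has constant nonzero norm. The parallel spinors, being $H^1_{loc}$ solutions, are continuous across $\Sigma$; feeding this into the spinorial restriction formula $\bar\nabla_X\bar\psi=\nabla^{\Sigma}_X\bar\psi+\tfrac12(S_\pm X)\cdot\nu_\pm\cdot\bar\psi$ for $X$ tangent to $\Sigma$, and using that the parallel spinors span each fibre while Clifford multiplication by a unit vector is invertible, one gets $S_+=-S_-$, i.e. the second fundamental forms match and $\bar g$ is $C^1$ across $\Sigma$; a $C^1$ metric flat on each side and matching to first order has vanishing distributional curvature, hence is smooth. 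Then $1-u$ corresponds to a $\bar g$-harmonic function with matching conormal derivative across $\Sigma$, so it, and therefore $g=(1-u)^{-\frac4{n-2}}\bar g$, is smooth as well. Thus $(M,g)$ is smooth and, via the $\bar g$-harmonic conformal factor $(1-u)^{-1}$, harmonically conformal to $(\mathbb R^n\setminus S,\delta)$ with $S$ the degeneracy locus of $\bar g$; the bound $\dim_{\mathcal H}S\le\frac{n-2}2$ and the vanishing $\pi_i(M)=0$ for $1\le i\le n-1-\lfloor n/2\rfloor$ then follow exactly as in Theorem~\ref{Thm: main1}, the former from the estimate of Schoen--Yau \cite{Schoen-Yau} on the singular set of a complete nonnegatively-curved locally conformally flat metric, the latter from Alexander duality together with $\dim_{\mathrm{top}}S\le\dim_{\mathcal H}S$.

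The main obstacle is the analytic heart of the Witten step: one must set up the Dirac operator and its weak solutions across the corner so that the integration by parts on $\Sigma$ produces precisely $\tfrac12(H^{\bar g}_++H^{\bar g}_-)|\bar\psi|^2$, while simultaneously controlling the harmonic spinor near the arbitrary ends — where $\bar g$ is incomplete — finely enough that the flux there genuinely vanishes; both are handled by combining interior elliptic theory on each side of $\Sigma$ with the weighted-space techniques from the positive mass theorem with arbitrary ends. A subsidiary point needing care is the clean expansion $u=\mathfrak c\,|x|^{2-n}+o(|x|^{2-n})$ along $E$ in the presence of the compact corner, which is what pins down $m(M,\bar g,E)=m(M,g,E)-2\mathfrak c(M,g,E)$.
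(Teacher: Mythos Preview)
Your overall strategy matches the paper's---conformally deform by the capacity potential and reduce to a positive-mass statement with corner---but there is a real gap in how you handle the possible incompleteness of $(M,\bar g)$.

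You propose to run Witten's argument directly on $\bar g=(1-u)^{4/(n-2)}g$. The difficulty is that $1-u$ can tend to zero along the arbitrary ends (for instance whenever such an end is itself asymptotically flat), so $(M,\bar g)$ is in general incomplete there, and your sentence ``the formal flux through the arbitrary ends vanishing by the chosen decay of $\bar\psi$'' is precisely the assertion that needs proof. The Callias/weighted-space machinery you invoke for the positive mass theorem with arbitrary ends is designed for \emph{complete} manifolds whose extra ends are merely non-asymptotically-flat; it does not, without new input, justify discarding a boundary term at an \emph{incomplete} end. The paper flags exactly this obstruction (see the remark following Proposition~\ref{Prop: mass capacity}) and circumvents it by never integrating on $(M,\bar g)$ at all: one works instead on the complete approximations $g_\varepsilon=\big(\frac{u+\varepsilon}{1+\varepsilon}\big)^{4/(n-2)}g$, proves a spin positive mass theorem with corner for each $(M,g_\varepsilon)$ (Proposition~\ref{Prop: spin PMT with corner}), and lets $\varepsilon\to0$ for the inequality. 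For rigidity one produces on each $(M,g_\varepsilon)$ a Dirac-harmonic spinor whose $L^2$ energy is controlled by $m(M,g_\varepsilon,E)\to0$; the weak limit as $\varepsilon\to0$ is then a $\bar g$-parallel spinor on $M$, and no integration by parts on the incomplete manifold is ever required. This $\varepsilon$-approximation, absent from your proposal, is the analytic device that makes the argument go through.

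Your route to smoothness across $\Sigma$ in the equality case---matching the second fundamental forms via the spinorial restriction formula and then upgrading a piecewise-flat $C^1$ metric---is a legitimate alternative to what the paper does, which is to build a local isometry $F$ into $\mathbb{R}^n$ from the parallel frame and use Fermi coordinates on both sides of $\Sigma$ to show $F$ becomes smooth after adjusting the smooth structure near the corner; once $(M,\bar g)$ is an open subset of $\mathbb{R}^n$, the smoothness of $u$ (hence of $g$) follows since $u^{-1}$ is weakly harmonic on a flat background. Two minor discrepancies: the Hausdorff-dimension bound on $S$ in the paper comes from Karakhanyan's Bessel-capacity characterisation rather than directly from \cite{Schoen-Yau}, and the vanishing of $\pi_i(M)$ is obtained by an explicit transversality/perturbation argument rather than via Alexander duality.
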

As a direct consequence, we can show
\begin{corollary}[Bray's mass capacity inequality]\label{Cor: Bray}
    Assume that $(M^n,g)$ is a spin asymptotically flat manifold with only one asymptotically flat end $E$, non-empty boundary $\Sigma$, and nonnegative scalar curvature.  
Assume further that the boundary $\Sigma$ is minimal. Then we have
\begin{equation}\label{Eq: Bray's mass-capacity}
    m(M,g,E)\geq \mathfrak c(M,g,\Sigma),
\end{equation}
where $\mathfrak c(M,g,\Sigma)$ denotes the capacity of $\Sigma$ in $(M,g)$ given by
$$\mathfrak c(M,g,\Sigma)=\inf_{\phi\in\mathcal C}\frac{1}{n(n-2)\omega_n}\int_M|\nabla_g\phi|^2\,\mathrm d\mu_g,$$
    where  $\mathcal C$ denotes the collection of all smooth functions $\phi$ with compact support such that $\phi\equiv 1$ on $\Sigma$.
The equality holds in \eqref{Eq: Bray's mass-capacity} if and only if $(M,g)$ is isometric to the half spatial Schwarzschild manifold.
\end{corollary}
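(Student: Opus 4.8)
The plan is to deduce Corollary~\ref{Cor: Bray} from Theorem~\ref{Thm: main2} by doubling $(M,g)$ across the minimal boundary $\Sigma$. Let $(\hat M,\hat g)$ be this double, with $\hat M=M_+\cup_\Sigma M_-$ the union of two isometric copies of $M$ glued along $\Sigma$ and $\hat g$ the metric restricting to $g$ on each copy. Then $(\hat M,\hat g)$ is a complete Riemannian manifold without boundary; it is spin (the double of a spin manifold is again spin); the distinguished asymptotically flat end $\hat E$ — the copy of $E$ inside $M_+$ — is untouched, so $m(\hat M,\hat g,\hat E)=m(M,g,E)$ and $R(\hat g)\in L^1(\hat E,\hat g)$; one has $R(\hat g)\ge 0$ away from $\Sigma$; and $\hat g$ is smooth up to $\Sigma$ from either side, so $(\hat M,\hat g)$ is a generalized asymptotically flat manifold with corner $\Sigma$. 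Since $\Sigma$ is minimal in $(M,g)$, its mean curvature with respect to the outward unit normal of each side of $\hat M$ vanishes, so the sum of the two mean curvatures is $0\ge 0$. Hence Theorem~\ref{Thm: main2} applies and gives $m(\hat M,\hat g,\hat E)\ge 2\,\mathfrak c(\hat M,\hat g,\hat E)$.

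Next I would identify $\mathfrak c(\hat M,\hat g,\hat E)=\tfrac12\,\mathfrak c(M,g,\Sigma)$. Let $\phi_0$ be the (unique) minimizer in Definition~\ref{def:capacity} for $(\hat M,\hat g,\hat E)$; it is $\hat g$-harmonic on $\hat M$, decays to $0$ at infinity along $\hat E$, and tends to $1$ at infinity along $M_-$. Let $\iota\colon\hat M\to\hat M$ be the reflection swapping $M_+$ and $M_-$; it is an isometry of $\hat g$ fixing $\Sigma$ pointwise and interchanging the two ends, so $1-\phi_0\circ\iota$ is $\hat g$-harmonic, lies in the same admissible class, and has the same Dirichlet energy. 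By uniqueness of the minimizer, $\phi_0=1-\phi_0\circ\iota$; restricting to the fixed-point set gives $\phi_0\equiv\tfrac12$ on $\Sigma$. Therefore $2\phi_0|_{M_+}$ is $g$-harmonic in $\mathrm{int}\,M$, equals $1$ on $\Sigma$, and decays at infinity, i.e.\ it is exactly the capacitary potential realizing $\mathfrak c(M,g,\Sigma)$. Because $\iota$ is an isometry, $\int_{\hat M}|\nabla\phi_0|^2=2\int_{M_+}|\nabla\phi_0|^2=\tfrac12\int_{M_+}|\nabla(2\phi_0)|^2$, so $\mathfrak c(\hat M,\hat g,\hat E)=\tfrac12\,\mathfrak c(M,g,\Sigma)$. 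Combining with the previous paragraph, $m(M,g,E)=m(\hat M,\hat g,\hat E)\ge 2\,\mathfrak c(\hat M,\hat g,\hat E)=\mathfrak c(M,g,\Sigma)$, which is \eqref{Eq: Bray's mass-capacity}.

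For the rigidity statement, equality in \eqref{Eq: Bray's mass-capacity} forces equality in Theorem~\ref{Thm: main2} for $(\hat M,\hat g,\hat E)$, so $(\hat M,\hat g)$ is smooth and harmonically conformal to $\mathbb R^n\setminus S$ for a bounded closed $S$ with $\dim_{\mathcal H}S\le\frac{n-2}{2}$; write $\hat g=u^{4/(n-2)}g_{euc}$ on $\mathbb R^n\setminus S$ for a positive harmonic function $u$. Since $\hat g$ is now smooth, $\iota$ is a smooth isometric involution whose fixed-point set is the hypersurface $\Sigma$, hence $\Sigma$ is totally geodesic in $(\hat M,\hat g)$ and $(M,g)$ is precisely one of the two isometric halves. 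Transported to $\mathbb R^n\setminus S$, $\iota$ is a conformal involution of $(\mathbb R^n\setminus S,g_{euc})$, so by the Liouville theorem for conformal maps it is the restriction of a Möbius reflection, and it interchanges the two asymptotically flat ends of $\hat M$ — of equal mass $m(M,g,E)$ — so these ends correspond to $\infty$ and a point $x_0\in S$. Representing the positive harmonic conformal factor as a constant plus a Newtonian potential, $u=a+\int_S|x-y|^{2-n}\,\mathrm d\mu(y)$ with $a>0$ (legitimate since $\dim_{\mathcal H}S<n-2$ makes $S$ polar), completeness of $\hat g$ together with the fact that $\hat M$ has exactly those two ends forces $\mu$ to be a single atom at $x_0$ and $S=\{x_0\}$, i.e.\ $u=a+b\,|x-x_0|^{2-n}$ with $b>0$ — the isotropic Schwarzschild conformal factor — and $\Sigma$ the coordinate sphere about $x_0$ fixed by the inversion, that is, the Schwarzschild horizon. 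Hence $(M,g)$ is isometric to the half spatial Schwarzschild manifold. The main obstacle is precisely this last step: ruling out that $S$ is larger than a single point and pinning down $u$ explicitly; this is where the Hausdorff-dimension bound on $S$, the completeness of $\hat g$, and the two-ends constraint genuinely enter, everything preceding being essentially formal.
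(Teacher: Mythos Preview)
Your strategy coincides with the paper's: double across $\Sigma$, apply Theorem~\ref{Thm: main2}, identify $2\,\mathfrak c(\hat M,\hat g,\hat E)=\mathfrak c(M,g,\Sigma)$, and for rigidity transport the reflection $\iota$ to a conformal involution $\rho$ of $\mathbb R^n\setminus S$ and invoke Liouville. The capacity identity is argued equivalently---the paper builds the capacitary potential on $M$ and extends it by odd reflection to $\hat M$, while you use uniqueness of the minimizer together with its reflection symmetry to deduce $\phi_0\equiv\tfrac12$ on $\Sigma$; these are two sides of the same coin.

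Where you flag the ``main obstacle''---pinning down $S$ as a single point---the paper takes a shorter, purely geometric route that bypasses any Riesz representation of the conformal factor. First it does a case split: if $\rho$ contained no inversion, it would (after ruling out dilatations via $\rho^2=\mathrm{id}$) be a Euclidean isometric involution, hence of the form $(-\mathrm{id}_V)\oplus\mathrm{id}_W$ on a splitting $\mathbb R^n=V\oplus W$, with fixed set the affine subspace $W$---either non-compact or a single point, never a compact hypersurface like $T=\Psi(\Sigma)$. So $\rho$ is an inversion in the round sphere $T$, with center $q_*$. Now the inversion sends the entire outer region of $T$ bijectively onto the inner region with the single point $q_*$ deleted; since $\rho$ also carries $\Psi(\hat M_+)$ onto $\Psi(\hat M_-)$, this forces $\Psi(\hat M)=\mathbb R^n\setminus\{q_*\}$, i.e.\ $S=\{q_*\}$. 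Once $S$ is a singleton, B\^ocher's theorem gives the positive harmonic conformal factor as $\alpha+\beta|x-q_*|^{2-n}$, and completeness of $\hat g$ forces $\alpha,\beta>0$, so $(\hat M,\hat g)$ is the full spatial Schwarzschild manifold and $(M,g)$ is one half. The missing idea in your outline is thus to read off $S=\{q_*\}$ directly from the structure of the sphere inversion rather than to analyze a measure supported on $S$.
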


\subsection*{Arrangements}
The remaining part of this paper is arranged as follows.
Sections \ref{Sec: inequality} and \ref{Sec: rigidity} are devoted to proving Theorem \ref{Thm: main1}. In Section \ref{Sec: inequality}, we prove the mass-capacity inequality based on the conformal method. In Section \ref{Sec: rigidity}, we establish the rigidity in the equality case of the mass-capacity inequality by constructing sufficiently many parallel spinors from an approximation procedure. In Section \ref{Sec: corner}, we show necessary modifications in the corner case and present the proofs of Theorem \ref{Thm: main2} and Corollary \ref{Cor: Bray}.

\bigskip
\section*{Acknowledgements}
The first-named author thanks Westlake University for its hospitality and support during a visit when this collaboration was initiated.

The second-named author is grateful to Professor Pengzi Miao for many helpful conversations during the BIRS-IASM workshop, {\it Recent Advances in Comparison Geometry}. He was partially supported by National Key R\&D Program of China 2023YFA1009900, NSFC grant 12401072, and the start-up fund from Westlake University.

\section{The mass-capacity inequality}\label{Sec: inequality}
In this section, we prove the desired mass-capacity inequality
$$m(M,g,E)\geq 2\mathfrak c(M,g,E)$$
for any generalized asymptotically flat manifold $(M,g,E)$ with nonnegative scalar curvature. The basic strategy is to reduce the desired mass-capacity inequality to the positive mass theorem with a use of conformal deformation.

We start with the following well-known expansion for harmonic functions on asymptotically flat ends, for which we include its proof for completeness.
\begin{lemma}\label{Lem: expansion}
    If $v$ is a harmonic function on some $E_s$ such that $v(x)\to 0$ as $x\to \infty$, then we have the expansion
    $$v=c|x|^{2-n}+w,$$
    where $c$ is a real constant and the error term $w$ satisfies $$w=O_2(|x|^{2-n-\tau})\mbox{ as } x\to\infty,$$ which means
    \begin{equation*}
    |w|+|x||\partial w|+|x|^2|\partial^2 w|=O(|x|^{2-n-\tau})\mbox{ as }x\to \infty.
    \end{equation*}    
\end{lemma}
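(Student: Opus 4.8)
The plan is to first freeze the coefficients of the Laplacian at infinity, treating $\Delta_g$ as a perturbation of the Euclidean Laplacian $\Delta$, and then run a standard iteration/weighted-estimate argument. Writing $\Delta_g v = 0$ in coordinates on $E_s$, we have $a^{ij}\partial_i\partial_j v + b^j \partial_j v = 0$ where $a^{ij} = \sqrt{g}\,g^{ij}$ and $b^j$ collect first derivatives of the metric; by the decay hypothesis (ii), $a^{ij} - \delta^{ij} = O(|x|^{-\tau})$ and $b^j = O(|x|^{-1-\tau})$. Hence $\Delta v = (\Delta - \Delta_g) v =: f$, where $f = (\delta^{ij} - a^{ij})\partial_i\partial_j v - b^j\partial_j v$. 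The first step is a crude decay estimate on $v$ itself: since $v$ is $\Delta_g$-harmonic and tends to $0$ at infinity, elliptic estimates on dyadic annuli $A_R = \{R \le |x| \le 2R\}$ combined with a barrier argument (comparing with multiples of $|x|^{2-n}$, which are $\Delta_g$-superharmonic up to lower order terms) give $v = O(|x|^{2-n+\epsilon})$ for any $\epsilon > 0$, and then interior Schauder estimates upgrade this to $|v| + |x||\partial v| + |x|^2 |\partial^2 v| = O(|x|^{2-n+\epsilon})$.

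The second step is the extraction of the leading term. With the decay of $v$ and $\partial^2 v$ in hand, the error $f$ satisfies $f = O(|x|^{-n-\tau+\epsilon})$, which is integrable at infinity (since $\tau > 0$ and $n \ge 3$, choosing $\epsilon < \tau$). Represent $v$ using the Euclidean Green's function: on $E_s$ write $v = h + N[f]$, where $N[f](x) = c_n \int_{E_s} |x-y|^{2-n} f(y)\,dy$ is the Newtonian potential of $f$ (which decays like $|x|^{2-n}$ because $f \in L^1$) and $h$ is Euclidean-harmonic on $E_s$ with $h \to 0$ at infinity after subtracting the boundary contribution. A Euclidean-harmonic function on an exterior domain vanishing at infinity has the classical expansion $h = c|x|^{2-n} + O(|x|^{1-n})$. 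Combining, $v = c|x|^{2-n} + w$ with $w = O(|x|^{2-n-\tau'})$ for some $\tau' > 0$; a bootstrap on the equation $\Delta w = f - (\text{lower order from } c|x|^{2-n})$ together with the precise decay of $f$ then sharpens this to $w = O(|x|^{2-n-\tau})$, and Schauder estimates give the claimed control on $\partial w$ and $\partial^2 w$.

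The main obstacle is getting the sharp exponent $\tau$ (rather than $\tau - \epsilon$) in the error term $w$, because each pass through the integral operator $N$ only yields decay governed by the decay of $f$, which in turn depends on the decay of $v$ and its second derivatives — so one must be careful that the iteration does not lose an $\epsilon$ at each stage and stabilizes at the optimal rate. The standard fix is to work in weighted Hölder or Sobolev spaces $C^{k,\alpha}_\beta$ and invoke the fact that $\Delta - \Delta_g : C^{2,\alpha}_\beta \to C^{0,\alpha}_{\beta-2-\tau}$ is a compact-order perturbation, so that $\Delta_g$ is Fredholm on the same weighted spaces as $\Delta$ for $\beta$ outside the exceptional set $\mathbb{Z}$; the indicial root $2-n$ gives exactly the leading term $c|x|^{2-n}$ and everything else is absorbed into the next weight $2-n-\tau$. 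An alternative, more hands-on route avoiding weighted-space machinery is to iterate the Kelvin transform: apply the inversion $x \mapsto x/|x|^2$ to bring infinity to the origin, where $v$ becomes a solution of a uniformly elliptic equation with a point singularity, and the removable-singularity / Böcher-type analysis yields the leading $|x|^{2-n}$ behavior directly; I would use whichever of these the paper's conventions make cleanest, but either way the sharp error rate is the one technical point requiring care.
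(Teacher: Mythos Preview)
Your proposal is correct, and you correctly identify that the weighted Sobolev/H\"older space framework is the clean route to the sharp exponent~$\tau$; that is exactly what the paper does. The paper's argument is considerably more streamlined than your hands-on bootstrap: it first extends $(E_s,g)$ to an asymptotically flat metric $\hat g$ on all of $\mathbb{R}^n$ and extends $v$ to $\hat v$ (harmonic outside a compact set), then invokes Bartnik's weighted Sobolev theory directly --- Proposition~2.2 of \cite{Bartnik1986} together with the maximum principle places $\hat v\in W^{2,p}_\delta$ for $\delta\in(2-n,0)$, and Theorem~1.17 of \cite{Bartnik1986} immediately yields the expansion $v=c|x|^{2-n}+O(|x|^{2-n-\tau})$. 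The derivative bounds on $w=v-c|x|^{2-n}$ then follow from a single application of interior Schauder estimates on dyadic annuli. Your decomposition $v=h+N[f]$ with a Euclidean-harmonic remainder and Newtonian potential is a valid alternative, but it requires the iteration you describe to reach the sharp rate, whereas the paper gets everything in one shot by citing the packaged Fredholm/expansion theory. Your Kelvin-transform suggestion would also work but is not what the paper uses.
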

\begin{proof}
    Without loss of generality, we can extend $(E_s,g)$ to an asymptotically flat manifold $(\mathbb R^n,\hat g)$ and the function $v$ to a function $\hat v$ defined on $\mathbb R^n$. Note that $\hat v$ is a harmonic function on $(\mathbb R^n,\hat g)$ outside a compact subset. Let $W^{k,p}_\delta$ be the weighted Sobolev space defined as the completion of $C_c^\infty$ with respect to the following norm
    $$\|u\|_{W^{k,p}_\delta}=\sum_{|j|=0}^k\left(\int_{\mathbb R^n}|D^ju|^p\rho^{-(\delta-|j|)p-n}\,\mathrm dx\right)^{\frac{1}{p}},$$
    where $\rho$ denotes the modified radial function given by
    $$\rho=\max\{1,|x|\}.$$
    Fix two constants $\delta\in (2-n,0)$ and $p>n$. It follows from \cite[Proposition 2.2]{Bartnik1986} as well as the maximum principle that $\hat v$ is a function in $W^{2,p}_\delta$. Then by \cite[Theorem 1.17]{Bartnik1986} we have the following expansion
    $$v=c|x|^{2-n}+O(|x|^{2-n-\tau})\mbox{ as }x\to \infty.$$
    Denote $w=v-c|x|^{2-n}$. Then it follows from the classical Schauder estimate \cite[Theorem 6.2]{Gilbarg-Trudinger} that $w=O_2(|x|^{2-n-\tau})$ as $x\to\infty$.
\end{proof}

\begin{corollary}\label{Cor: function u}
    There is a positive harmonic function $u$ on $(M,g)$ such that we have the expansion 
    $$u(x)=1-\mathfrak c(M,g,E)\cdot|x|^{2-n}+w,$$
    with $w=O_2(|x|^{2-n-\tau})$ as $x\to\infty$.
\end{corollary}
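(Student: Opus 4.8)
The plan is to realize $u$ as the capacitary potential of $(M,g,E)$, constructed by a standard exhaustion argument, and then to extract its asymptotic expansion from Lemma~\ref{Lem: expansion}, identifying the leading coefficient with $\mathfrak c(M,g,E)$ via a flux computation. First I would fix a compact smooth exhaustion $\Omega_1\subset\Omega_2\subset\cdots$ of $M$ with $\bigcup_j\Omega_j=M$, arranged so that $\partial\Omega_j$ meets $E$ exactly in a coordinate sphere $S_{R_j}$ with $R_j\to\infty$. On $\Omega_j$ solve $\Delta_g u_j=0$ with $u_j=1-\eta$ on $\partial\Omega_j$, and extend by $u_j:=1-\eta$ on $M\setminus\Omega_j$. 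The maximum principle gives $0\le u_j\le 1$; since the harmonic replacement is energy-minimizing and every competitor $\phi$ in Definition~\ref{def:capacity} has $\phi-\eta$ supported in some $\Omega_j$, the energies $\int_M|\nabla_g u_j|^2\,\mathrm d\mu_g$ are monotone and converge to $n(n-2)\omega_n\,\mathfrak c(M,g,E)$. Interior elliptic estimates then produce a $C^\infty_{loc}$ limit $u$ which is harmonic on $M$, satisfies $0\le u\le 1$ and $\int_M|\nabla_g u|^2\,\mathrm d\mu_g=n(n-2)\omega_n\,\mathfrak c(M,g,E)$, and for which $u-(1-\eta)$ lies in the Dirichlet completion $\dot W^{1,2}(M)$ of $C^\infty_c(M)$.

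Next I would show $u\to 1$ at infinity along $E$. Since $\eta\equiv 0$ on $E_2$, near infinity on $E$ the function $1-u$ coincides with $(1-\eta)-u\in\dot W^{1,2}(M)$, which on an asymptotically flat end forces $1-u\in L^{2n/(n-2)}$ there, hence $1-u\to 0$; alternatively this follows by comparing each $u_j$ with barriers $a|x|^{2-n}+b$ on the annuli $\{R<|x|<R_j\}$ and passing to the limit. Thus $1-u$ is harmonic on some $E_s$ and tends to $0$ at infinity, so Lemma~\ref{Lem: expansion} gives
\[ 1-u=c\,|x|^{2-n}+w,\qquad w=O_2\!\left(|x|^{2-n-\tau}\right) \]
for a constant $c$.

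Finally I would verify $c=\mathfrak c(M,g,E)$ by the energy identity. Approximating $u-(1-\eta)$ by $C^\infty_c$ functions and using $\Delta_g u=0$ gives $\int_M\nabla_g u\cdot\nabla_g\big(u-(1-\eta)\big)\,\mathrm d\mu_g=0$, hence
\[ \int_M|\nabla_g u|^2\,\mathrm d\mu_g=-\int_M\nabla_g u\cdot\nabla_g\eta\,\mathrm d\mu_g. \]
As $\nabla\eta$ is compactly supported in $E$ and $\eta$ equals $1$, resp. $0$, near the two boundary components of its support, integrating by parts and using conservation of the harmonic flux $\rho\mapsto\int_{S_\rho}\langle\nabla_g u,\nu\rangle_g\,\mathrm d\sigma_g$ (with $\nu$ the outward unit normal) reduces the right-hand side to $\lim_{\rho\to\infty}\int_{S_\rho}\langle\nabla_g u,\nu\rangle_g\,\mathrm d\sigma_g$. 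Substituting $u=1-c\,|x|^{2-n}-w$ and using the decay rate $\tau>0$ of the metric and of $w$, this limit equals $c(n-2)n\omega_n$, so comparison with the left-hand side yields $c=\mathfrak c(M,g,E)$, which is the claimed expansion. Positivity of $u$ is then immediate: $u\ge 0$ is harmonic with $u\to 1$, so $u>0$ by the strong minimum principle.

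I expect the genuine obstacle to be the middle step — arguing that $u\to 1$ at infinity along $E$, i.e. that the Dirichlet-class membership rules out a nonzero additive constant in the asymptotic expansion — since this is what legitimizes the use of Lemma~\ref{Lem: expansion}; the exhaustion construction and the flux computation are routine once the asymptotically flat decay estimates are in hand.
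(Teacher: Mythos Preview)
Your proof is correct and follows the same overall strategy as the paper---construct the capacitary potential by an exhaustion argument and identify the leading coefficient via an energy/flux computation---but the choice of exhaustion is different, and this difference is exactly the point you flag as the ``genuine obstacle.'' You exhaust by \emph{compact} sets $\Omega_j$ growing in all directions (including along $E$), which forces you to argue separately that the limit $u$ tends to $1$ at the infinity of $E$ before Lemma~\ref{Lem: expansion} can be invoked. The paper instead exhausts by open sets $V_i$ each of which contains the \emph{entire} end $E$ (only $V_i\setminus E$ is bounded), and solves $\Delta_g v_i=0$ on the unbounded domain $V_i$ with $v_i=1$ on $\partial V_i$ and $v_i\to 0$ at the infinity of $E$. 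With this choice every $v_i$ already decays at infinity, so Lemma~\ref{Lem: expansion} applies to each $v_i$ individually; moreover the maximum principle makes the sequence $v_i$ monotone decreasing, hence the limit $v$ inherits $0\le v\le v_1\to 0$ at infinity for free. The paper's exhaustion thus sidesteps precisely the step you identified as delicate, at the cost of solving Dirichlet problems on unbounded (asymptotically flat) domains; your compact exhaustion keeps each auxiliary problem on a bounded domain, which is more elementary but requires the extra Sobolev/barrier argument you supplied.
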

\begin{proof}
    Let $\{V_i\}_{i=1}^\infty$ be a smooth open exhaustion of $M$ such that each $V_i$ contains $E$ and that $V_i\setminus E$ is bounded for all $i$. We may assume $\{V_i\}_{i=1}^\infty$ to be increasing without loss of generality. Let us define
    \begin{equation}\label{Eq: ci}
    \mathfrak c_i=\inf_{\phi\in\mathcal C_i}\frac{1}{n(n-2)\omega_n}\int_M|\nabla_g\phi|^2\,\mathrm d\mu_g,
    \end{equation}
     where $\mathcal C_i$ denotes the collection of all smooth functions $\phi$ such that $\phi=1$ on $\partial V_i$ and $\phi\equiv 0$ around the infinity of $E$. 
     Through a standard minimizing procedure, we can find a harmonic function $v_i:\bar V_i\to [0,1]$ such that $v_i=1$ on $\partial V_i$, $v_i(x)\to 0$ as $x\to\infty$ and
     $$\int_{V_i}|\nabla v_i|^2\,\mathrm d\mu_g=n(n-2)\omega_n \mathfrak c_i.$$
     It follows from Lemma \ref{Lem: expansion} that $v_i$ has the expansion
     $$v_i=c_i|x|^{2-n}+w_i,$$
     where $c_i$ is a constant and $w_i=O_2(|x|^{2-n-\tau})$ as $x\to\infty$. From integration by parts we can derive $c_i=\mathfrak c_i$ and also
     \begin{equation}\label{Eq: local capacity}
         \int_{\partial E}\frac{\partial v_i}{\partial \vec n}\,\mathrm d\sigma_g=n(n-2)\omega_n\mathfrak c_i,
     \end{equation}
where $\vec n$ denotes the outward unit normal of $\partial E$ with respect to $E$.

Notice that $v_i$ is decreasing as $i$ increases. In particular, the functions $v_i$ converge to a harmonic function $v$ satisfying $0\leq v\leq 1$ and $v(x)\to 0$ as $x\to\infty$. From the Schauder estimates the convergence is smooth in every bounded subset, so we know from \eqref{Eq: local capacity} that the constants $\mathfrak c_i$ converge to some constant $\mathfrak c$ and
$$\int_{\partial E}\frac{\partial v}{\partial \vec n}\,\mathrm d\sigma_g=n(n-2)\omega_n\mathfrak c.$$
From Lemma \ref{Lem: expansion} and integration by parts we know that $v$ has the expansion
$$v=\mathfrak c|x|^{2-n}+w\mbox{ with }w=O_2(|x|^{2-n-\tau})\mbox{ as }x\to\infty.$$

Let us verify $\mathfrak c=\mathfrak c(M,g,E)$. By definition, we have $\mathfrak c_i\geq \mathfrak c(M,g,E)$ and so $\mathfrak c\geq \mathfrak c(M,g,E)$. For the opposite direction, we take a sequence of smooth functions $\phi_j$ in the class $\mathcal C$ such that
$$n(n-2)\omega_n\int_M|\nabla \phi_j|^2\,\mathrm d\mu_g\to \mathfrak c(M,g,E)\mbox{ as }j\to \infty.$$
For each $\phi_j$ we can find some $V_i$ containing the support of $\phi_j-\eta$ and so $\phi_j$ is in the class $\mathcal C_i$. Therefore, we have
$$n(n-2)\omega_n\int_M|\nabla \phi_j|^2\,\mathrm d\mu_g\geq \mathfrak c_i.$$
After taking limit, we obtain $\mathfrak c(M,g,E)\geq \mathfrak c$.

Now we complete the proof by taking $u=1-v$. The positivity of $u$ comes from the strong maximum principle and the fact $u(x)\to 1$ as $x\to\infty$.
\end{proof}

\begin{proposition}\label{Prop: mass capacity}
    If $(M,g)$ has nonnegative scalar curvature, then we have
    $$m(M,g,E)\geq 2 \mathfrak c(M,g,E).$$
\end{proposition}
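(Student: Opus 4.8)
The plan is to use the positive harmonic function $u$ from Corollary~\ref{Cor: function u} to conformally change the metric and then invoke the positive mass theorem. Recall that under a conformal change $\bar g = u^{\frac{4}{n-2}}g$ in dimension $n$, the scalar curvature transforms by
$$
R(\bar g) = u^{-\frac{n+2}{n-2}}\left(-\frac{4(n-1)}{n-2}\Delta_g u + R(g)\,u\right).
$$
Since $u$ is harmonic and $R(g)\geq 0$, we get $R(\bar g)\geq 0$. The key point is then to understand the asymptotic geometry of $(M,\bar g)$ near the end $E$. Plugging the expansion $u(x) = 1 - \mathfrak c(M,g,E)|x|^{2-n} + O_2(|x|^{2-n-\tau})$ into $\bar g = u^{\frac{4}{n-2}}g$ and using that $g$ itself is asymptotically flat, one sees that $(M,\bar g)$ is again asymptotically flat on $E$ (the conformal factor tends to $1$ at the right rate), so the mass $m(M,\bar g,E)$ is well-defined.

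The next step is the mass-change formula under conformal deformation by a harmonic factor. A direct computation of the ADM integrand for $\bar g$, using $\bar g_{ij} = u^{\frac{4}{n-2}}g_{ij}$ and the expansion of $u$, shows that the leading-order contribution of the conformal factor to $m(M,\bar g,E)$ is $2$ times the coefficient of $|x|^{2-n}$ in $u$, i.e.
$$
m(M,\bar g,E) = m(M,g,E) - 2\,\mathfrak c(M,g,E).
$$
Concretely, one expands $\partial_j \bar g_{ij} - \partial_i \bar g_{jj}$, isolates the terms coming from $\partial u$ (the dominant new contribution since $\partial(u^{4/(n-2)}) \sim \frac{4}{n-2}\partial u$), and computes $\lim_{\rho\to\infty}\frac{1}{2n(n-1)\omega_n}\int_{S_\rho}$ of that piece; the integral of the $g$-part reproduces $m(M,g,E)$, while the $u$-part yields $-\frac{2(n-2)}{2n(n-1)\omega_n}\cdot n(n-2)\omega_n \cdot \frac{(-\mathfrak c)}{\ldots}$—after the standard bookkeeping this collapses to $-2\mathfrak c(M,g,E)$, using $\lim \int_{S_\rho}|x|^{1-n}x_i\cdot\frac{x_i}{|x|}\,\mathrm d\sigma = n\omega_n \cdot (n-1)$ type normalizations consistent with the definition of mass. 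The decay $w = O_2(|x|^{2-n-\tau})$ with $\tau > \frac{n-2}{2}$ guarantees all error terms vanish in the limit.

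Finally, since $(M,g)$ is spin, so is $(M,\bar g)$ (spinnability is topological), and $(M,\bar g,E)$ is a spin generalized asymptotically flat manifold with nonnegative scalar curvature. The positive mass theorem for such manifolds (established in the references \cite{Lesourd-Unger-Yau,LLU23,Zhu23,BC2005,CZ2024}) applies to the end $E$ and gives $m(M,\bar g,E)\geq 0$, which is exactly
$$
m(M,g,E) - 2\,\mathfrak c(M,g,E)\geq 0.
$$
I expect the main obstacle to be the careful verification of the mass-change formula: one must check that $(M,\bar g)$ genuinely satisfies the asymptotically flat decay conditions (in particular that $R(\bar g)\in L^1$, which follows from $R(\bar g) = u^{-4/(n-2)}R(g)$ and boundedness of $u$ away from zero and infinity on $E$), and one must track the conformal factor's contribution to the ADM integrand with enough precision to extract exactly the coefficient $2$ rather than some other multiple—this is where the precise normalization constant in the definition of mass and the $\frac{4}{n-2}$ exponent interact, and it is easy to drop a factor. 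A clean way to organize this is to note that for the model metric $u^{4/(n-2)}g_{euc}$ with $u = 1 + a|x|^{2-n}$ one has mass exactly $-2a$ (this is the Schwarzschild normalization, where $a = m/2$ gives mass $m$—wait, with $a<0$), so the general case follows by the perturbative argument plus the fact that lower-order terms do not contribute.
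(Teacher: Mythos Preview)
Your argument has a genuine gap precisely at the step where you invoke the positive mass theorem for $(M,\bar g)$ with $\bar g=u^{\frac{4}{n-2}}g$. The positive mass theorems you cite all require $(M,\bar g)$ to be a \emph{complete} Riemannian manifold (this is part of the definition of a generalized asymptotically flat manifold), and this can fail here. The function $u$ was built as $u=1-v$, where $v$ is the decreasing limit of harmonic functions equal to $1$ on inner boundaries exhausting the arbitrary ends; consequently $u$ may well tend to $0$ along those ends. When that happens the conformal factor $u^{\frac{4}{n-2}}$ degenerates and $(M,\bar g)$ is incomplete, so the positive mass theorem is simply not applicable. The paper flags exactly this issue in the remark following the proposition.

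The paper's fix is to replace $u$ by $u_\varepsilon=\dfrac{u+\varepsilon}{1+\varepsilon}$, which is still harmonic (so $R(g_\varepsilon)\ge 0$ persists) but is now bounded below by $\varepsilon/(1+\varepsilon)>0$, guaranteeing that $(M,g_\varepsilon)$ remains complete. One then computes $m(M,g_\varepsilon,E)=m(M,g,E)-2(1+\varepsilon)^{-1}\mathfrak c(M,g,E)$, applies the spin positive mass theorem to the genuinely complete $(M,g_\varepsilon,E)$, and sends $\varepsilon\to 0$. Your computation of the mass shift under the conformal change is the right idea and survives intact for $u_\varepsilon$; what is missing is this $\varepsilon$-regularization to restore completeness before citing the positive mass theorem.
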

\begin{proof}
    Given any constant $\varepsilon>0$ we define
    \begin{equation}\label{Eq: approximation metric}
    g_\varepsilon=u_\varepsilon^{\frac{4}{n-2}}g\mbox{ with } u_\varepsilon=\frac{u+\varepsilon}{1+\varepsilon},
    \end{equation}
    where $u$ is the positive harmonic function from Corollary \ref{Cor: function u}.
    Note that $u_\varepsilon$ has a positive lower bound, so $(M,g_\varepsilon)$ remains to be a complete Riemannian manifold without boundary. It follows from the expansion of $u$ in Corollary \ref{Cor: function u} that $(M,g_\varepsilon,E)$ remains to be a generalized asymptotically flat manifold and that we have
$$m(M,g_\varepsilon,E)=m(M,g,E)-2(1+\varepsilon)^{-1}\mathfrak c(M,g,E).$$
    Clearly, $u_\varepsilon$ is a harmonic function on $(M,g)$ since $u$ is. We see that $(M,g_\varepsilon)$ has nonnegative scalar curvature. Then it follows from the spin positive mass theorem (see \cite{BC2005, CZ2024} for instance) for generalized asymptotically flat manifolds that we have
    $$m(M,g_\varepsilon,E)\geq 0,$$ which yields
    $$m(M,g,E)\geq 2(1+\varepsilon)^{-1}\mathfrak c(M,g,E).$$
    The proof is now completed by letting $\varepsilon\to 0$.
\end{proof}
\begin{remark}
    If one simply takes the function $u$ to be the conformal factor, then the positive mass theorem may not be applied since the Riemannian manifold after conformal deformation can be incomplete. This explains the reason why we have to use an approximation argument for the mass-capacity inequality.
\end{remark}

\section{Rigidity}\label{Sec: rigidity}

In this section, we deal with the rigidity of the mass-capacity inequality and always assume
$$m(M,g,E)=2\mathfrak c(M,g,E).$$
To prove rigidity, we have to deal with the {\it possibly incomplete} Riemannian manifold $(M,\bar g,E)$ with
$$\bar g=u^{\frac{4}{n-2}}g,$$
where $u$ is the positive harmonic function coming from Corollary \ref{Cor: function u}.
 
First let us show the flatness of $(M,\bar g)$ based on the spinor method from \cite{CZ2024}. The basic idea is to construct sufficiently many parallel harmonic spinors on $(M,\bar g)$.

\subsection{A brief review on spin geometry}
Since $M$ is spin, we can take a spin structure $P_{spin}\to P_{SO}(M,\bar g)$. Let 
$$\rho:spin(n)\to End(\Delta_n)$$ denote the complex spin representation. Correspondingly, the complex spinor bundle $\mathcal S\to M$ is defined to be
$$\mathcal S=P_{spin}\times_\rho \Delta_n.$$ 
In particular, we have the Clifford multiplication $$\bar c:Cl(T_xM,\bar g_x)\to End(\mathcal S_x).$$ 
Denote the Levi-Civita connection with respect to the metric $\bar g$ by $\overline\nabla$. Recall that this induces a spinor connection, still denoted by $\overline\nabla$ for simplicity, on the spinor bundle $\mathcal S$,  which satisfies the Leibniz rule
$$\overline\nabla_X (\bar c(\varphi)s)=\bar c(\overline\nabla_X \varphi)s+\bar c(\varphi)\overline\nabla_X s$$
for any smooth sections
$$X\in \Gamma(TM),\,\varphi \in \Gamma(Cl(M,\bar g))\mbox{ and }s\in \Gamma(\mathcal S).$$
Moreover, there is a Hermitian metric $\langle\cdot,\cdot\rangle$ on the spinor bundle $\mathcal S$ such that
\begin{itemize}
\item[(i)] we have
$$\langle \bar c(v)s_1,\bar c(v)s_2\rangle=\langle s_1,s_2\rangle$$
for any $\bar g$-unit vector $v\in T_xM$ and $s_i\in \mathcal S_x$ at any point $x\in M$,
\item[(ii)] and we have
$$ X\langle s_1,s_2\rangle=\langle \overline\nabla_X s_1,s_2\rangle+\langle s_1,\overline\nabla_X s_2\rangle$$
for any smooth sections $X\in \Gamma(TM)$ and $s_i\in \Gamma(\mathcal S)$.
\end{itemize}
Based on the Clifford multiplication and the spinor connection, the Dirac operator $\overline{\mathcal D}$ on the spinor bundle $\mathcal S$ is defined to be 
$$\overline{\mathcal D}=\bar c(\bar e_i)\overline\nabla_{\bar e_i},$$
where $\{\bar e_i\}$ is a $\bar g$-orthonormal basis of $T_xM$.

Suppose that $\tilde g$ is another metric on $M$ which is conformal to the metric $\bar g$. Then the orthonormal frame bundle $P_{SO}(M,\tilde g)$ has a natural isomorphism to $P_{SO}(M,\bar g)$ up to scaling of frames. Therefore, we can take the same spin structure $$P_{spin}\to P_{SO}(M,\tilde g)\cong P_{SO}(M,\bar g)$$ and also the same spinor bundle $\mathcal S$ as before. Denote the associated Clifford multiplication and the spinor connection by $\tilde c$ and $\widetilde \nabla$ respectively. Then the corresponding Dirac operator $\widetilde{\mathcal D}$ is defined by
$$\widetilde{\mathcal D}=\tilde c(\tilde e_i)\widetilde \nabla_{\tilde e_i},$$
where $\{\tilde e_i\}$ is a $\tilde g$-orthonormal basis of $T_xM$.
Since $\tilde g$ is conformal to $g$, we can write 
$$\tilde g=e^{2w}\bar g$$
for some smooth function $w$ on $M$. Through a straightforward computation from Appendix \ref{App: A}, we have the following relations
$$\tilde c(v)=e^w\bar c(v)\mbox{ for any }v\in T_xM,$$
and
\begin{equation}\label{Eq: connection relation}
    \widetilde \nabla_{(\cdot)}=\overline \nabla_{(\cdot)}+\frac{1}{2}\bar c(\overline \nabla w)\bar c(\cdot)+\frac{1}{2}\bar g(\overline \nabla w,\cdot).
\end{equation}
In particular, we still have
\begin{equation}\label{Eq: compatible}
     X\langle s_1,s_2\rangle=\langle \widetilde\nabla_X s_1,s_2\rangle+\langle s_1,\widetilde\nabla_X s_2\rangle
\end{equation}
for any smooth sections $X\in \Gamma(TM)$ and $s_i\in \Gamma(\mathcal S)$.

In our application, we consider those approximation metrics $g_\varepsilon$ from \eqref{Eq: approximation metric} conformal to the metric $\bar g$. With the construction above, we can define the Clifford multiplication, the spinor connection and also the Dirac operator on $\mathcal S$ correspondingly, denoted by $\tilde c_\varepsilon$, $\widetilde \nabla^\varepsilon$ and $\widetilde{\mathcal D}^\varepsilon$ respectively. 

\subsection{The method of Callias operator}
We follow the set-up in the work \cite{CZ2024} by Cecchini-Zeidler. Let us consider the double spinor bundle
$$\hat {\mathcal S}=\mathcal S\oplus \mathcal S.$$
It is clear that there is a natural Hermitian metric $\widehat{\langle\cdot,\cdot\rangle}$ on $\hat {\mathcal S}$ induced from that on $\mathcal S$. Similarly, there is a natural spinor connection on $\hat{\mathcal S}$ induced from that on $\mathcal S$, denoted by $\widehat\nabla^\varepsilon$. Moreover, we can introduce the twisted Clifford multiplication 
$$\hat c_\varepsilon:Cl (T_xM,g_{\varepsilon})\to End(\hat S_x)\mbox{ with }\hat c_\varepsilon(\cdot):=\left(
\begin{array}{cc}
0& \tilde c_\varepsilon(\cdot)\\
\tilde c_\varepsilon(\cdot)&0
\end{array}\right)$$
and the twisted Dirac operator $\widehat{\mathcal D}^\varepsilon$ by
\[\widehat{\mathcal D}^\varepsilon=\left(
\begin{array}{cc}
0&\widetilde{\mathcal D}^\varepsilon\\
\widetilde{\mathcal D}^\varepsilon&0
\end{array}\right).\]
Given any smooth function $\psi$ on $M$ we can also define the {\it Callias operator} $\widehat B_\psi^\varepsilon$ by
$$\widehat B_\psi^\varepsilon=\widehat{\mathcal D}^\varepsilon+\psi \sigma,$$
 where $\sigma$ is the involution on $\hat{\mathcal S}$ given by
 $$\sigma=\left(
\begin{array}{cc}
0& -\mathfrak i\\
\mathfrak i&0
\end{array}\right)\mbox{ where }\mathfrak i=\sqrt{-1}.$$

Let $U$ be a smooth open subset of $M$ such that $U\Delta E$ has compact closure, where $E$ denotes the distinguished end of the generalized asymptotically flat manifold $(M,g,E)$. Assume that $U$ has non-empty boundary $\partial U$. We use $\nu_\varepsilon$ to denote the inward unit normal of $\partial U$ in $U$ with respect to the metric $g_\varepsilon$. Then the {\it chirality operator} is defined to be
$$\widehat\chi_\varepsilon=\hat c_\varepsilon(\nu_\varepsilon) \sigma: \hat {\mathcal S}|_{\partial U}\to \hat {\mathcal S}|_{\partial U}.$$

Now, let us introduce several function spaces for analysis. We use $$L^p_{loc}(\overline U,\hat{\mathcal S})$$ to denote the collection of all spinors which are $L^p$ in all compact subsets of $\overline U$. We point out that the definition of the space $L^p_{loc}(\overline U,\hat{\mathcal S})$ does not depend on the choice of the reference metric, which we just omit in above notation. 

In a similar way, we use 
$$W^{k,p}_{loc}(\overline U,g_\varepsilon,\hat{\mathcal S})$$ to denote the collection of all spinors which are $W^{k,p}$ in all compact subsets of $\overline U$, where the derivative and the integration are both taken with respect to the metric $g_\varepsilon$. To further measure the decay rate of functions at infinity, we introduce the weighted Lebesgue spaces and weighted Sobolev spaces.
For any $\delta\in\mathbb R$, the weighted Lebesgue space $$L^p_{\delta}(U,g_\varepsilon,\hat{\mathcal S})$$ is defined to be the collection of all spinors in $L^p_{loc}(\overline U,\hat{\mathcal S})$ with finite norm
$$\|\hat s\|_{L^p_{\delta}(U,g_\varepsilon,\hat{\mathcal S})}=\left(\int_U\widehat{|\hat s|}^{p}\rho^{-\delta p-n}\,\mathrm d\mu_{g_\varepsilon}\right)^{\frac{1}{p}},$$
where 
$$\widehat{|\hat s|}=\widehat{\langle \hat s,\hat s\rangle}^{\frac{1}{2}}$$ and $\rho$ is the function given by 
\[
\rho=\left\{\begin{array}{cl}
|x|&\hspace{-2mm}\mbox{in }E,\\
1&\hspace{-2mm}\mbox{outside }E.\end{array}\right.
\]
Similarly, the weighted Sobolev space 
$$W^{k,p}_\delta(U,g_\varepsilon,\hat{\mathcal S})$$ is defined to be the collection of all spinors in $W^{k,p}_{loc}(\overline U,g_\varepsilon,\hat{\mathcal S})$ with finite norm
$$\|\hat s\|_{W^{k,p}_\delta(U,g_\varepsilon,\hat{\mathcal S})}:=\sum_{i=0}^k\|\widehat \nabla^\varepsilon\hat s\|_{L^p_{\delta-i}(U,g_\varepsilon,\hat{\mathcal S})}.$$
To take the boundary condition into consideration,
we will use  $C^\infty(U,\hat{\mathcal S};\widehat \chi_\varepsilon)$ to denote the collection of smooth spinors $\hat s\in C^\infty(U,\hat{\mathcal S})$ satisfying $$\widehat\chi_\varepsilon\hat s|_{\partial U}=\hat s|_{\partial U}.$$
Similar notations will be used in the case of weighted Sobolev spaces, where the restriction of a spinor on the boundary $\partial U$ is always understood in the sense of trace.

As a preparation, let us review some preliminary results from \cite{CZ2024}. Denote
$$q^*=\frac{n-2}{2}.$$

\begin{lemma}\label{Lem: nonnegative index}
    Given any function $\psi\in C_c^\infty(\overline U,\mathbb R)$, the Callias operator $$\widehat { B}^\varepsilon_\psi:H^1_{-q^*}(U,g_\varepsilon,\hat{\mathcal S};\widehat\chi_\varepsilon)\to L^2(U,g_\varepsilon,\hat{\mathcal S})$$
    is a Fredholm operator with nonnegative index.
\end{lemma}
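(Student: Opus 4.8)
The plan is to follow the standard Callias-operator index-computation scheme as in Cecchini--Zeidler \cite{CZ2024}, adapted to the weighted-Sobolev setting on the asymptotically flat end $E$. First I would establish that $\widehat B^\varepsilon_\psi$ with the chirality boundary condition $\widehat\chi_\varepsilon$ is a well-defined, elliptic, self-adjoint-type boundary value problem; the chirality operator $\widehat\chi_\varepsilon=\hat c_\varepsilon(\nu_\varepsilon)\sigma$ anticommutes with the tangential part of $\widehat{\mathcal D}^\varepsilon$ along $\partial U$ and squares to the identity, so it gives a local elliptic boundary condition for $\widehat{\mathcal D}^\varepsilon$, and since $\psi\sigma$ is a zeroth-order symmetric term it does not affect ellipticity of the boundary problem. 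The Fredholm property then comes in two pieces: interior/boundary ellipticity handles everything on the compact part $\overline U \setminus E$, while on the end $E$ one needs that $\widehat B^\varepsilon_\psi$ is invertible at infinity in the appropriate weighted sense. Here I would use that $\psi$ has compact support, so outside a large ball $\widehat B^\varepsilon_\psi=\widehat{\mathcal D}^\varepsilon$, and that the metric $g_\varepsilon$ is asymptotically flat with the expansion from Corollary \ref{Cor: function u}; the model operator at infinity is the Euclidean Dirac operator, which is an isomorphism $H^1_{-q^*}\to L^2$ for the critical weight $-q^* = -\tfrac{n-2}{2}$ by the standard weighted elliptic theory of Bartnik \cite{Bartnik1986} (the weight $\delta=-q^*$ is non-exceptional for the Laplacian/Dirac operator on $\mathbb R^n$, $n\ge 3$). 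Combining the parametrix on the end with the boundary parametrix on the compact part via a partition of unity yields a two-sided parametrix modulo compact operators, hence the Fredholm property.

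For the index sign, the key point is a relative-index/deformation argument. Since $\psi\in C_c^\infty$, one can connect $\widehat B^\varepsilon_\psi$ to $\widehat B^\varepsilon_0=\widehat{\mathcal D}^\varepsilon$ through the norm-continuous family $\widehat B^\varepsilon_{t\psi}$, $t\in[0,1]$, all of which are Fredholm with the same boundary condition, so $\mathrm{ind}\,\widehat B^\varepsilon_\psi=\mathrm{ind}\,\widehat{\mathcal D}^\varepsilon$. On the double spinor bundle $\hat{\mathcal S}=\mathcal S\oplus\mathcal S$ the operator $\widehat{\mathcal D}^\varepsilon$ is off-diagonal and anticommutes with the grading $\mathrm{diag}(1,-1)$, while the chirality boundary condition $\widehat\chi_\varepsilon$ is compatible with this grading; this symmetry forces $\mathrm{ind}\,\widehat{\mathcal D}^\varepsilon=0$, or alternatively one observes that the adjoint boundary value problem is unitarily equivalent to the original one (the chirality condition is self-adjoint for $\widehat{\mathcal D}^\varepsilon$ in the weighted $L^2$ pairing because $-q^*$ is the self-dual weight for $n$-dimensional asymptotically flat ends), so the index is zero. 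Either way, $\mathrm{ind}\,\widehat B^\varepsilon_\psi = 0 \ge 0$. If instead one wants to keep the statement in the sharper ``$\ge 0$'' form that \cite{CZ2024} uses (allowing $\psi$ not necessarily compactly supported in later applications), I would invoke their relative index theorem: deforming $\psi$ changes the index only by the spectral contribution of the perturbation, which is nonnegative by a Callias-type spectral-flow count.

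The main obstacle I anticipate is not the algebra but the analytic bookkeeping on the end: verifying that the chirality boundary condition together with the critical weight $\delta = -q^*$ really produces a \emph{Fredholm} (not merely semi-Fredholm) operator, because the critical weight sits exactly at the borderline of the Fredholm range for the weighted Laplacian, and one must check that the cokernel on the end is still finite-dimensional. This is handled by the usual trick of perturbing the weight slightly to $-q^*\mp\eta$ where the operator is surjective/injective and controlling the finite-dimensional discrepancy, exactly as in \cite{Bartnik1986} and as carried out in \cite{CZ2024}; I would cite those results rather than redo the weighted estimates. A secondary technical point is that $g_\varepsilon$ is only asymptotically flat with decay rate $\tau>\tfrac{n-2}{2}$ rather than better, but since $\tau$ exceeds the critical threshold the error terms in the Dirac operator are subcritical perturbations and do not destroy the Fredholm property or change the index.
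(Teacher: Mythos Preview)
The paper's proof of this lemma is a single-line citation: ``This is \cite[Theorem 2.12]{CZ2024}.'' Your proposal is therefore not so much an alternative to the paper's proof as it is a sketch of the cited theorem itself. In that sense the approaches agree: the paper defers entirely to Cecchini--Zeidler, and you have outlined the standard scheme (local elliptic boundary condition from the chirality operator, weighted Fredholm theory on the asymptotically flat end via Bartnik, index stability under the compact perturbation $\psi\sigma$, and a symmetry/self-adjointness argument for the index of $\widehat{\mathcal D}^\varepsilon$) that one expects to underlie that reference.

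Two minor remarks on your sketch. First, the ``index $=0$'' argument via the grading $\mathrm{diag}(1,-1)$ needs a little care: that grading commutes with the boundary condition $\widehat\chi_\varepsilon$ (which is block-diagonal after expanding $\hat c_\varepsilon(\nu_\varepsilon)\sigma$) but the off-diagonal $\widehat{\mathcal D}^\varepsilon$ swaps the two summands, each carrying the opposite chirality eigenvalue on $\partial U$; the vanishing of the index then really comes from the fact that the two resulting single-spinor problems are formal adjoints of each other at the self-dual weight, which is the second variant you mention. Second, your concern that $-q^*$ is a ``borderline'' weight is slightly misplaced: $-q^* = -(n-2)/2$ lies strictly in the open interval $(2-n,0)$, which is the non-exceptional range for the Euclidean Dirac operator, so no weight perturbation is needed for the Fredholm property on the end. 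Neither point is a genuine gap; the proposal is sound and consistent with the paper's intent.
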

\begin{proof}
    This is \cite[Theorem 2.12]{CZ2024}.
\end{proof}

Let $\{\bar e_i\}$ be the $\bar g$-orthonormal frame on $E$ coming from orthonormalizing the coordinate frame $\{\partial_{x_i}\}$ with respect to the metric $\bar g$. Then the frame $\{\bar e_i\}$ determines a smooth section of the frame bundle $P_{SO}(M,\bar g)$, which can be further lifted to a smooth section of the spin structure $P_{spin}$. This induces an isomorphism 
\begin{equation}\label{Eq: isomorphism}
    \hat {\mathcal S}|_E\cong E\times (\Delta_n\oplus\Delta_n).
\end{equation}

\begin{definition}
    A smooth spinor 
    $$\hat s_0\in C^\infty(E,\hat{\mathcal S}|_E)$$ is called constant if it is a constant section of $E\times(\Delta_n\oplus\Delta_n)$ through the isomorphism \eqref{Eq: isomorphism}. Moreover, a spinor 
    $$\hat s\in H^1_{loc}(\overline U,g_\varepsilon,\hat{\mathcal S})$$ is said to be {\it asymptotically constant with respect to $g_\varepsilon$} if we have
$$\hat s|_E-\hat s_0\in H^1_{-q^*}(E,g_\varepsilon,\hat{\mathcal S})$$
for some constant spinor $\hat s_0\in C^\infty(E,\hat{\mathcal S}|_E)$. In particular, we can define $$\|\hat s\|_{\infty}:=\widehat{|\hat s_0|}\in [0,+\infty).$$
\end{definition}

\begin{lemma}\label{Lem: mass control energy}
    Given a smooth function $\psi\in C_c^\infty(\overline U,\mathbb R)$ and a spinor $\hat s\in H^1_{loc}(\overline U,g_\varepsilon,\hat{\mathcal S};\widehat\chi_\varepsilon)$ which is asymptotically constant with respect to $g_\varepsilon$, we have
    \[
    \begin{split}
        \frac{n-1}{2}&\omega_{n-1}\cdot m(M,g_\varepsilon,E)\cdot\|\hat s\|_{\infty}^2+\|\widehat B^\varepsilon_\psi\hat s\|^2_{L^2(U,g_\varepsilon,\hat{\mathcal S})}\\
        &\geq  \|\widehat\nabla^\varepsilon\hat s\|^2_{L^2(U,g_\varepsilon,\hat{\mathcal S})}+\int_U\theta_\psi\widehat{|\hat s|}^2\mathrm d\mu_{g_\varepsilon}+\int_{\partial U}\eta_\psi \widehat{|\hat s|}^2\mathrm d\sigma_{g_\varepsilon},
    \end{split}
    \]
    where
    $$\theta_\psi=\frac{R(g_\varepsilon)}{4}+\psi^2-|\mathrm d\psi|_{g_\varepsilon}$$
    and 
    $$\eta_\psi=\frac{H(g_\varepsilon)}{2}+\psi|_{\partial U},$$
    where $H(g_\varepsilon)$ denotes the mean curvature of $\partial U$ with respect to the metric $g_\varepsilon$ and outward unit normal.
\end{lemma}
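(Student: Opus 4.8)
The statement to prove is the Callias--Lichnerowicz--Weitzenb\"ock--type estimate of Lemma \ref{Lem: mass control energy}, relating the mass of the conformal end to the Dirichlet energy of an asymptotically constant spinor, with curvature and boundary correction terms.

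\medskip

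The plan is to run the standard integration-by-parts argument for the Callias operator on the manifold with end, cutting off at a large coordinate sphere $S_\rho\subset E$ and taking $\rho\to\infty$. First I would write down the Lichnerowicz--type identity for the Callias operator $\widehat B^\varepsilon_\psi=\widehat{\mathcal D}^\varepsilon+\psi\sigma$: since $\sigma$ anticommutes with each $\hat c_\varepsilon(v)$ (hence with $\widehat{\mathcal D}^\varepsilon$ at the symbol level) and is a parallel involution, one gets pointwise
\[
(\widehat B^\varepsilon_\psi)^*\widehat B^\varepsilon_\psi=(\widehat\nabla^\varepsilon)^*\widehat\nabla^\varepsilon+\frac{R(g_\varepsilon)}{4}+\psi^2+\hat c_\varepsilon(\mathrm d\psi)\sigma,
\]
where the cross term $\widehat{\mathcal D}^\varepsilon(\psi\sigma)+\psi\sigma\widehat{\mathcal D}^\varepsilon$ collapses to the zeroth-order Clifford term $\hat c_\varepsilon(\mathrm d\psi)\sigma$ because $\psi\sigma$ and $\widehat{\mathcal D}^\varepsilon$ anticommute up to that derivative. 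Pairing with $\hat s$ and integrating over $U\cap\{\rho\le\Lambda\}$, the term $(\widehat\nabla^\varepsilon)^*\widehat\nabla^\varepsilon$ produces $\|\widehat\nabla^\varepsilon\hat s\|_{L^2}^2$ plus a boundary flux $-\int\widehat{\langle\widehat\nabla^\varepsilon_{\nu}\hat s,\hat s\rangle}$, the $\widehat{\mathcal D}^\varepsilon$ part produces $\|\widehat B^\varepsilon_\psi\hat s\|_{L^2}^2$ plus the usual Dirac boundary term, and the zeroth-order term $\hat c_\varepsilon(\mathrm d\psi)\sigma$ is estimated by $-|\mathrm d\psi|_{g_\varepsilon}\widehat{|\hat s|}^2$ since $\hat c_\varepsilon(\mathrm d\psi)\sigma$ is self-adjoint with operator norm $|\mathrm d\psi|_{g_\varepsilon}$. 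Collecting these gives the bulk term $\int_U\theta_\psi\widehat{|\hat s|}^2$.

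\medskip

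Next I would handle the two boundaries. On $\partial U$, the chirality boundary condition $\widehat\chi_\varepsilon\hat s=\hat s$ is exactly designed so that the combined Dirac-and-connection boundary integrand over $\partial U$ reduces to $\bigl(\tfrac{H(g_\varepsilon)}{2}+\psi\bigr)\widehat{|\hat s|}^2=\eta_\psi\widehat{|\hat s|}^2$; this is the standard computation with the chirality operator (the tangential Dirac operator on $\partial U$ contributes a perfect divergence that integrates to zero, and the mean-curvature term appears from commuting $\hat c_\varepsilon(\nu_\varepsilon)$ past the normal derivative). On the sphere $S_\Lambda\subset E$, I would use that $\hat s|_E-\hat s_0\in H^1_{-q^*}$ with $\hat s_0$ constant and that $g_\varepsilon\to\delta$ with the asymptotically flat decay; substituting the expansion of the metric and evaluating the flux of $\widehat\nabla^\varepsilon$ against the constant spinor reproduces, in the limit $\Lambda\to\infty$, the ADM mass integrand, giving $\tfrac{n-1}{2}\omega_{n-1}\,m(M,g_\varepsilon,E)\,\|\hat s\|_\infty^2$; the error terms coming from $\hat s|_E-\hat s_0$ and from $\psi$ having compact support (so $\psi\equiv 0$ near infinity, killing the $\psi\sigma$ boundary contribution at infinity) vanish in the limit by the weighted decay. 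This last computation is essentially Witten's boundary term calculation adapted to the doubled bundle and the conformal frame.

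\medskip

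The main obstacle is the careful bookkeeping at the asymptotic boundary: one must verify that the flux of $\widehat\nabla^\varepsilon\hat s$ through $S_\Lambda$ converges to the mass term with the correct constant $\tfrac{n-1}{2}\omega_{n-1}$, controlling the cross terms between $\hat s_0$ and $\hat s-\hat s_0$ using only $\hat s-\hat s_0\in H^1_{-q^*}$ (so one cannot assume pointwise decay and must argue via a density/Cauchy--Schwarz argument in the weighted spaces, exactly as in \cite{Bartnik1986,CZ2024}), and also that the boundary term is insensitive to the choice of the conformal frame $\{\bar e_i\}$ used to trivialize $\hat{\mathcal S}|_E$. Since this is precisely the content of the corresponding estimate in \cite{CZ2024}, I would structure the proof to isolate the Lichnerowicz identity and the $\partial U$ computation as the new ingredients and cite \cite{CZ2024} for the asymptotic analysis, noting only the modifications forced by the conformal change of frame and by keeping $\varepsilon$ fixed throughout.
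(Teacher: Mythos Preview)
Your proposal is correct and follows essentially the same route as the paper's proof: compute $(\widehat B^\varepsilon_\psi)^2$ via the Schr\"odinger--Lichnerowicz formula plus the anticommutation $\widehat{\mathcal D}^\varepsilon\sigma=-\sigma\widehat{\mathcal D}^\varepsilon$, integrate by parts over $U$ cut off at a large sphere, estimate the $\hat c_\varepsilon(\mathrm d\psi)\sigma$ term by $-|\mathrm d\psi|_{g_\varepsilon}\widehat{|\hat s|}^2$, identify the flux at infinity with the mass (the paper cites \cite[Corollary~5.15]{DanLeeGeometricRelativity} rather than \cite{CZ2024} for this step), and handle the $\partial U$ term via the chirality condition. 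One small point: on $\partial U$ the tangential Dirac piece does not vanish as a ``perfect divergence'' per se---the paper's mechanism is that the boundary Dirac operator $\widehat{\mathcal D}^\varepsilon_\partial$ anticommutes with $\widehat\chi_\varepsilon$ and is formally self-adjoint on the closed hypersurface $\partial U$, so $\int_{\partial U}\widehat{\langle\widehat{\mathcal D}^\varepsilon_\partial\hat s,\hat s\rangle}=\tfrac12\int_{\partial U}\widehat{\langle(\widehat\chi_\varepsilon\widehat{\mathcal D}^\varepsilon_\partial+\widehat{\mathcal D}^\varepsilon_\partial\widehat\chi_\varepsilon)\hat s,\hat s\rangle}=0$---but this is only a difference in phrasing, not in substance.
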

\begin{proof}
This is \cite[Proposition 2.5]{CZ2024}. We include the proof for the convenience of the audience. By definition it is direct to verify
    $$\widehat{\mathcal D}^\varepsilon\sigma=-\sigma \widehat{\mathcal D}^\varepsilon\mbox{ and }\widehat D^\varepsilon \psi=\hat c_\varepsilon(\widetilde\nabla^\varepsilon\psi)+\psi\widehat{\mathcal D}^\varepsilon.$$
    Combined with the Schr\"odinger-Lichnerowicz formula we can compute
    \[
    \begin{split}
        (\widehat B^\varepsilon_\psi)^2&=(\widehat{\mathcal D}^\varepsilon)^2+\widehat D^\varepsilon \psi\sigma+\psi\sigma\widehat{\mathcal D}^\varepsilon+\psi^2\\
        &=\widehat\nabla^{\varepsilon,*}\widehat\nabla^\varepsilon+\frac{R(g_\varepsilon)}{4}+\hat c_\varepsilon(\widetilde\nabla^\varepsilon\psi)\sigma+\psi^2.
    \end{split}
    \]
    Acting both sides on $\hat s$, pairing with $\hat s$, and integrating over $(U,g_\varepsilon)$, we obtain
    \begin{equation}\label{Eq: 01}
    \begin{split}
        &\|\widehat B^\varepsilon_\psi\hat s\|^2_{L^2(U,g_\varepsilon,\hat{\mathcal S})}+\int_{\partial U\cup S_\infty}\widehat{\langle \hat B^\varepsilon_\psi\hat s,\hat c_\varepsilon(\nu_\varepsilon)\hat s\rangle}\,\mathrm d\sigma_{g_\varepsilon}\\
        &=\|\widehat\nabla^\varepsilon\hat s\|^2_{L^2(U,g_\varepsilon,\hat{\mathcal S})}+\int_U\left(\frac{R(g_\varepsilon)}{4}+\psi^2\right)\widehat{|\hat s|}^2+\hat c_\varepsilon(\widetilde\nabla^\varepsilon\psi)\widehat{\langle\sigma\hat s,\hat s\rangle }\,\mathrm d\mu_{g_\varepsilon}\\
        &\qquad\qquad+\int_{\partial U\cap S_\infty}\widehat{\langle \widehat\nabla^\varepsilon_{\nu_\varepsilon}\hat s,\hat s\rangle}\,\mathrm d\sigma_{g_\varepsilon},
    \end{split}
    \end{equation}
    where the integral over $S_\infty$ is understood to be the limit of the integral over coordinate spheres $\{|x|=r\}$ as $r\to +\infty$, and $\nu_\varepsilon$ is the inward unit normal of $\partial U$ in $U$ as well as the unit normal of $S_\infty$ opposite to the infinity with respect to the metric $g_\varepsilon$. Denote
    $$I=\widehat{\langle \widehat\nabla^\varepsilon_{\nu_\varepsilon}\hat s,\hat s\rangle}-\widehat{\langle \hat B^\varepsilon_\psi\hat s,\hat c_\varepsilon(\nu_\varepsilon)\hat s\rangle}$$
    and write $\hat s=(s_1,s_2)$. Then we can compute
    $$I=\sum_{i=1}^2\langle \widetilde \nabla^\varepsilon_{\nu_\varepsilon}s_i+ \tilde c_\varepsilon(\nu_\varepsilon)\widetilde{\mathcal D}^\varepsilon s_i,s_i\rangle\mbox{ on }S_\infty$$
    and it follows from \cite[Corollary 5.15]{DanLeeGeometricRelativity} that we have
    \begin{equation}\label{Eq: 02}
        \int_{S_\infty}I\,\mathrm d\sigma_{g_\varepsilon}=\frac{n-1}{2}\omega_{n-1}\cdot m(M,g_\varepsilon,E)\cdot\|\hat s\|^2_{\infty}.
    \end{equation}
    Define
    $$\widehat {\mathcal D}^\varepsilon_\partial=\hat c_\varepsilon(\nu)\hat c_\varepsilon(e_{i,\partial})\widehat\nabla^\varepsilon_{e_{i,\partial}}-\frac{1}{2}H(g_\varepsilon),$$
    where $\{e_{i,\partial}\}$ denotes an orthonormal frame on $\partial U$ with respect to the metric $g_\varepsilon$. It is direct to verify
    $$\widehat{\mathcal D}^\varepsilon_\partial\widehat\chi_\varepsilon+\widehat\chi_\varepsilon\widehat{\mathcal D}^\varepsilon_\partial=0,\,\widehat{\langle\widehat\chi_\varepsilon\hat s_1,\hat s_2\rangle}=\widehat{\langle\hat s_1,\widehat\chi_\varepsilon\hat s_2\rangle},$$
    and
    $$\int_{\partial U}\widehat{\langle\widehat{\mathcal D}^\varepsilon_\partial\hat s_1,\hat s_2\rangle}\,\mathrm d\sigma_{g_\varepsilon}=\int_{\partial U}\widehat{\langle\hat s_1,\widehat{\mathcal D}^\varepsilon_\partial\hat s_2\rangle}\,\mathrm d\sigma_{g_\varepsilon}.$$
    Recall that we have $\widehat\chi_\varepsilon\hat s=\hat s$ on $\partial U$. Then we can derive
    \begin{equation}\label{Eq: 03}
    \begin{split}
        \int_{\partial U}I\,\mathrm d\sigma_{g_\varepsilon}&=\int_{\partial U}\widehat{\langle \widehat{\mathcal D}^\varepsilon_\partial\hat s,\hat s\rangle}+\frac{1}{2}H(g_\varepsilon)\widehat{|\hat s|}^2+\psi\widehat{\langle \widehat\chi_\varepsilon\hat s,\hat s\rangle}\,\mathrm d\sigma_{g_\varepsilon}\\
        &=\frac{1}{2}\int_{\partial U}\widehat{\langle \widehat{\mathcal D}^\varepsilon_\partial\hat s,\widehat\chi_\varepsilon\hat s\rangle}+\widehat{\langle \widehat\chi_\varepsilon\hat s,\widehat{\mathcal D}^\varepsilon_\partial\hat s\rangle}\,\mathrm d\mu_{g_\varepsilon}+\int_{\partial U} \eta_\psi\widehat{|\hat s|}^2\mathrm d\sigma_{g_\varepsilon}\\
        &=\int_{\partial U} \eta_\psi\widehat{|\hat s|}^2\mathrm d\sigma_{g_\varepsilon},
    \end{split}
     \end{equation}
    where we have used
    $$\int_{\partial U}\widehat{\langle \widehat{\mathcal D}^\varepsilon_\partial\hat s,\widehat\chi_\varepsilon\hat s\rangle}+\widehat{\langle \widehat\chi_\varepsilon\hat s,\widehat{\mathcal D}^\varepsilon_\partial\hat s\rangle}\,\mathrm d\sigma_{g_\varepsilon}=\int_{\partial U}\widehat{\langle (\widehat\chi_\varepsilon\widehat{\mathcal D}^\varepsilon_\partial+\widehat\chi_\varepsilon\widehat{\mathcal D}^\varepsilon_\partial)\hat s,\hat s\rangle}\mathrm d\sigma_{g_\varepsilon}=0$$
    in the final line. On the other hand, we have
    \begin{equation}\label{Eq: 04}
        \int_U\hat c_\varepsilon(\widetilde\nabla^\varepsilon\psi)\widehat{\langle\sigma\hat s,\hat s\rangle }\,\mathrm d\sigma_{g_\varepsilon}\geq -\int_U|\mathrm d\psi|_{g_\varepsilon}\widehat{|\hat s|}^2\mathrm d\sigma_{g_\varepsilon}.
    \end{equation}
We complete the proof by combining \eqref{Eq: 01}-\eqref{Eq: 04}.
\end{proof}

\subsection{The flatness} \label{subsection: flatness}
In this subsection, we show the flatness of $(M,\bar g)$. If $M$ has only one end, then $(M,\bar g)$ is a spin asymptotically flat manifold in the classical sense, which has nonnegative scalar curvature and zero mass, where the flatness of $(M,\bar g)$ follows from the spin positive mass theorem \cite{BC2005,CZ2024} for generalized asymptotically flat manifolds. Therefore, in the remaining discussion we will always assume that $M\setminus E$ is unbounded, and consequently $(M,\bar g)$ could be incomplete.

First let us construct approximation harmonic spinors.

\begin{lemma}\label{Lem: approximation spinor}
    Given any constant spinor $\hat s_0\in C^\infty(E,\hat{\mathcal S}|_E)$, there is a spinor $\hat s\in H^1_{loc}(M,g_\varepsilon,\hat{\mathcal S})$ asymptotically constant to $\hat s_0$ with respect to $g_\varepsilon$ such that we have $\widehat{\mathcal D}^\varepsilon \hat s=0$ in $L^2(M,g_\varepsilon,\hat{\mathcal S})$ and
    \begin{equation}\label{Eq: energy estimate}  \|\widehat\nabla^\varepsilon\hat s\|^2_{L^2(M,g_\varepsilon,\hat{\mathcal S})}\leq \frac{n-1}{2}\omega_{n-1}\cdot m(M,g_\varepsilon,E)\cdot\|\hat s_0\|_\infty^2.
    \end{equation}
\end{lemma}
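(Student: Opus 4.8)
The plan is to produce the approximation spinor $\hat s$ by solving a boundary-value problem for the Callias operator $\widehat B_\psi^\varepsilon$ on an exhausting sequence of domains and passing to the limit. First I would fix a constant spinor $\hat s_0$ on $E$ and a cutoff $\beta$ that equals $1$ near infinity of $E$ and is supported in $E$, so that $\beta\hat s_0$ is a globally defined smooth spinor agreeing with $\hat s_0$ near infinity. For a large parameter $R$ let $U_R$ be a smooth domain with $U_R\Delta E$ compactly contained and $\partial U_R$ the level set $\{|x|=R'\}$ pushed deep into the non-$E$ region; choose $\psi=\psi_R\in C_c^\infty(\overline{U_R},\mathbb R)$ supported away from $E$. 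The idea is to find $\hat s_R$ with $\widehat B_{\psi_R}^\varepsilon\hat s_R = -\widehat{\mathcal D}^\varepsilon(\beta\hat s_0)$ (the right-hand side has compact support since $\beta\hat s_0$ is harmonic near infinity — here one uses that $\hat s_0$ is a constant section in the $\bar g$-orthonormalized coordinate frame, and the decay from the expansion of $u$ makes $\widehat{\mathcal D}^\varepsilon(\beta\hat s_0)$ lie in $L^2$), subject to the chirality boundary condition $\widehat\chi_\varepsilon\hat s_R=\hat s_R$ on $\partial U_R$. Solvability comes from Lemma~\ref{Lem: nonnegative index}: the Callias operator is Fredholm of nonnegative index on $H^1_{-q^*}(U_R,g_\varepsilon,\hat{\mathcal S};\widehat\chi_\varepsilon)$, so after possibly correcting by an element of the (finite-dimensional) cokernel — or more cleanly, by noting that $\psi_R\sigma$ can be arranged so the operator is invertible, or by projecting the right-hand side — we get a solution. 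Then $\hat t_R := \beta\hat s_0 + \hat s_R$ satisfies $\widehat B_{\psi_R}^\varepsilon\hat t_R = \psi_R\sigma(\beta\hat s_0)=0$ since $\psi_R$ is supported off $E$ where $\beta\hat s_0$ is supported, and $\hat t_R$ is asymptotically constant to $\hat s_0$.

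Next I would extract the energy bound. Apply Lemma~\ref{Lem: mass control energy} to $\hat t_R$ on $(U_R,g_\varepsilon)$ with $\psi=\psi_R$: since $R(g_\varepsilon)\ge 0$ we have $\theta_{\psi_R}\ge \psi_R^2-|\mathrm d\psi_R|_{g_\varepsilon}$, and by the standard trick of choosing $\psi_R$ to be a slowly varying bump (for instance $\psi_R$ solving or approximately solving $\psi^2 \ge |\mathrm d\psi|$, e.g. $\psi_R$ supported in a large annulus in the non-$E$ region with $|\mathrm d\psi_R|$ small compared to $\psi_R^2$) we can make $\int_{U_R}\theta_{\psi_R}\widehat{|\hat t_R|}^2\ge 0$; likewise choosing $\psi_R\ge 0$ large enough near $\partial U_R$ relative to the mean curvature makes $\eta_{\psi_R}\ge 0$ on $\partial U_R$ — actually, the cleaner route used in \cite{CZ2024} is that on the boundary one arranges $\psi_R|_{\partial U_R}$ to dominate $-H(g_\varepsilon)/2$, but since $\partial U_R$ can be chosen with controlled geometry and $\psi_R$ is at our disposal this is harmless. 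With $\widehat B_{\psi_R}^\varepsilon\hat t_R=0$, Lemma~\ref{Lem: mass control energy} then gives
\[
\|\widehat\nabla^\varepsilon\hat t_R\|^2_{L^2(U_R,g_\varepsilon,\hat{\mathcal S})}\le \frac{n-1}{2}\omega_{n-1}\cdot m(M,g_\varepsilon,E)\cdot\|\hat s_0\|_\infty^2.
\]
This uniform bound, together with the asymptotic-constancy normalization controlling $\hat t_R$ near infinity and the Poincaré-type inequality on the weighted spaces, gives uniform $H^1_{loc}$ bounds for $\hat t_R$ on every fixed compact set. Then I would pass to a subsequential weak limit $\hat s$ in $H^1_{loc}$; elliptic regularity for the Dirac operator and the fact that $\widehat{\mathcal D}^\varepsilon\hat t_R=\widehat B_{\psi_R}^\varepsilon\hat t_R-\psi_R\sigma\hat t_R = -\psi_R\sigma\hat t_R\to 0$ on any fixed compact set (as $\psi_R\to 0$ pointwise there) yields $\widehat{\mathcal D}^\varepsilon\hat s=0$. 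Weak lower semicontinuity of the norm passes the energy estimate \eqref{Eq: energy estimate} to $\hat s$, and the uniform decay control near infinity shows $\hat s - \hat s_0 \in H^1_{-q^*}(E,g_\varepsilon,\hat{\mathcal S})$, i.e. $\hat s$ is asymptotically constant to $\hat s_0$.

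I expect the main obstacle to be the careful choice of the auxiliary data $(\psi_R, U_R)$ and the limiting argument on the \emph{possibly incomplete} manifold $(M,\bar g)$ — since the conformal factor $u$ may vanish somewhere, one must work with the $g_\varepsilon$ metrics throughout and control how the relevant constants (the Fredholm estimates, the Poincaré constants on $H^1_{-q^*}$, the boundary terms) behave, while keeping everything independent of $R$. In particular, verifying that the cokernel correction in applying Lemma~\ref{Lem: nonnegative index} does not spoil the asymptotic-constancy or the energy bound, and ensuring $\int\theta_{\psi_R}\widehat{|\hat t_R|}^2 + \int_{\partial U_R}\eta_{\psi_R}\widehat{|\hat t_R|}^2\ge 0$ uniformly, are the delicate points; the rest (Schrödinger–Lichnerowicz manipulations, elliptic regularity, weak limits) is routine given the cited results from \cite{CZ2024}.
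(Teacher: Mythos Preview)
Your overall strategy—exhaust $M$ by domains $U_R$, solve a Callias boundary problem, apply Lemma~\ref{Lem: mass control energy}, and pass to a weak limit—is exactly the paper's approach. The gap is in your choice of the potential $\psi_R$. You propose $\psi_R$ supported away from $E$ and ``slowly varying'' so that $\psi_R^2-|\mathrm d\psi_R|_{g_\varepsilon}\ge 0$; but a nonzero smooth compactly supported function can never satisfy $\psi^2\ge|\mathrm d\psi|$ globally (near the edge of its support $\psi$ is small while $|\mathrm d\psi|$ is not), and you cannot arrange the integral $\int\theta_{\psi_R}\widehat{|\hat t_R|}^2\ge 0$ without already knowing $\hat t_R$. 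Worse, if the region where $\theta_{\psi_R}<0$ sits in the arbitrary (non-$E$) part of $M$, you have no Sobolev or Poincar\'e control there with which to absorb the error.

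The paper's resolution is to let $\psi_k$ transition to zero on a \emph{fixed} annulus $\mathcal A=\{1\le|x|\le 2\}\subset E$, requiring only $F_k:=\psi_k^2-|\mathrm d\psi_k|_{g_\varepsilon}\ge 0$ outside $\mathcal A$ together with $\sup_{\mathcal A}|F_k|\to 0$. Because $\mathcal A$ lies in the asymptotically flat end, the weighted Sobolev inequality gives $\int_{\mathcal A}\widehat{|\hat s|}^2\le C\,\|\widehat\nabla^\varepsilon\hat s\|_{L^2(E)}^2$ with $C$ independent of $k$. This single estimate does double duty: it forces $\ker\widehat B^\varepsilon_{\psi_k}=0$ for large $k$ (so nonnegative index yields surjectivity directly—no cokernel correction or projection is needed), and it makes the bad term $\int_{\mathcal A}|F_k|\,\widehat{|\hat s_k|}^2=o(1)\cdot\|\widehat\nabla^\varepsilon\hat s_k\|_{L^2}^2$ disappear in the limit, giving \eqref{Eq: energy estimate} on the nose.
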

\begin{proof}
This will be done through an exhaustion procedure. It is standard that we can take an increasing smooth open exhaustion $\{U_k\}_{k=1}^\infty$ of $M$ such that all $U_k\setminus E$ are bounded. Recall that $M\setminus E$ is unbounded from our assumption, so each $U_k$ has non-empty smooth boundary $\partial U_k$. Denote
    $$\mathcal A=\{1\leq |x|\leq 2\}\subset E.$$
   Then it is easy to construct smooth functions $\psi_k$ on $U_k$ such that
    \begin{itemize}
        \item $\psi_k\equiv 0$ in $E\setminus \mathcal A$ and $2\psi_k+H_{\partial U_k}(g_\varepsilon)\geq 0$ on $\partial U_k$;
        \item $F_k:=\psi_k^2-|\mathrm d\psi_k|_{g_\varepsilon}$ are nonnegative outside $\mathcal A$ and the functions $\psi_k$ converge smoothly to the zero function in $\mathcal A$ as $k\to+\infty$.
    \end{itemize}

We claim that the map $$\widehat B^\varepsilon_{\psi_k}: H^1_{-q^*}(U_k,g_\varepsilon,\hat{\mathcal S};\widehat\chi_\varepsilon)\to L^2(U_k,g_\varepsilon,\hat{\mathcal S})$$
is surjective for sufficiently large $k$. According to Lemma \ref{Lem: nonnegative index} it suffices to show $\ker\widehat B^\varepsilon_{\psi_k}=0$. To show this we take any spinor 
$\hat s\in \ker\widehat B^\varepsilon_{\psi_k}$, then from Lemma \ref{Lem: mass control energy} and the construction of $\psi_k$ we have
$$\int_{\mathcal A}|F_k|\widehat{|\hat s|}^2\mathrm d\mu_{g_\varepsilon}\geq -\int_{U_k}\theta_{\psi_k}\widehat{|\hat s|}^2\mathrm d\mu_{g_\varepsilon}\geq \|\widehat\nabla^\varepsilon\hat s\|^2_{L^2(U_k,g_\varepsilon,\hat{\mathcal S})}.$$
On the other hand, since the weighted Sobolev inequality is true due to the asymptotical flatness of $g_\varepsilon$ and the Kato inequality holds due to \eqref{Eq: compatible}, with a use of H\"older inequality there is a positive constant $C$ independent of $k$ such that 
$$\|\widehat\nabla^\varepsilon\hat s\|^2_{L^2(E,g_\varepsilon,\hat{\mathcal S})}\geq \|\widehat\nabla^\varepsilon\widehat{|\hat s|}\|^2_{L^2(E,g_\varepsilon)}\geq C\int_{\mathcal A}\widehat{|\hat s|}^2\mathrm d\mu_{g_\varepsilon}.$$
Since we can guarantee $\sup_{\mathcal A}|F_k|<C$ for sufficiently large $k$, we obtain $\hat s\equiv 0$ in $\mathcal A$, and then the unique continuation property yields $\hat s\equiv 0$ in the whole $U_k$.

Now let us fix a smooth cut-off function $\zeta$ on $M$ such that $\zeta\equiv 1$ around the infinity of $E$ and $\zeta \equiv 0$ outside $E$. From the previous discussion, since we have $\widehat B^\varepsilon_{\psi_k}(\zeta \hat s_0)\in L^2(U_k,g_\varepsilon,\hat{\mathcal S})$ from a direct computation, we can find a spinor $$\hat s_k^* \in H^1_{-q^*}(U_k,g_\varepsilon,\hat{\mathcal S};\widehat\chi_\varepsilon)$$
satisfying
$$\widehat B^\varepsilon_{\psi_k}\hat s_k^*=-\widehat B^\varepsilon_{\psi_k}(\zeta \hat s_0).$$
From a similar analysis as before based on Lemma \ref{Lem: mass control energy} and the construction of $\psi_k$ we can derive
$$\|\widehat \nabla^\varepsilon\hat s_k^*\|_{L^2(U_k,g_\varepsilon,\hat{\mathcal S})}^2\leq C\|\widehat B^\varepsilon_{\psi_k}(\zeta\hat s_0)\|_{L^2(E,g_\varepsilon,\hat{\mathcal S})}^2\leq C\|\zeta\hat s_0\|_{H^1_{-q^*}(E,g_\varepsilon,\hat{\mathcal S})}^2$$
for all $k$ large enough, where $C$ is a constant independent of $k$. In particular, we know that the spinors $\hat s_k^*$ converge to a spinor $\hat s_\infty^*$ weakly in $H^1_{loc}(M,g_\varepsilon,\hat{\mathcal S})$ up to a subsequence. Since the norm is lower-semicontinuous with respect to the weak convergence, we have 
$$\hat s_\infty^*\in H^1_{-q^*}(E,g_\varepsilon,\hat{\mathcal S}),$$ and so the spinor 
$$\hat s:=\hat s_\infty^*+\zeta\hat s_0$$ is asymptotically constant to $\hat s_0$ with respect to $g_\varepsilon$. The weak convergence yields that $\hat s$ is a weak solution of $\widehat {\mathcal D}^\varepsilon \hat s=0$. Combined with the fact that $\hat s$ has its derivative in $L^2(M,g_\varepsilon,\hat{\mathcal S})$, we obtain $\widehat {\mathcal D}^\varepsilon\hat s= 0$ in $L^2(M,g_\varepsilon,\hat{\mathcal S})$.

Denote $$s_k:=\hat s_k^*+\zeta \hat s_0.$$
Note that the derivatives $\widehat\nabla^\varepsilon \hat s_k$ converge to $\widehat\nabla^\varepsilon\hat s$ weakly in $L^2(M,g_\varepsilon,\hat{\mathcal S})$, and that from Lemma \ref{Lem: mass control energy} we have
$$
\|\widehat\nabla^\varepsilon\hat s_k\|^2_{L^2(M,g_\varepsilon,\hat{\mathcal S})}\leq \frac{n-1}{2}\omega_{n-1}\cdot m(M,g_\varepsilon,E)\cdot\|\hat s_0\|_\infty^2+\int_{\mathcal A}|F_k|\widehat{|\hat s_k|}^2\mathrm d\mu_{g_\varepsilon}.$$
Using the lower-semicontinuity of $L^2$-norm under weak convergence and the estimate
$$\int_{\mathcal A}|F_k|\widehat{|\hat s_k|}^2\mathrm d\mu_{g_\varepsilon}\leq o(1)\cdot\|\widehat\nabla^\varepsilon\hat s_k\|^2_{L^2(M,g_\varepsilon,\hat{\mathcal S})}\mbox{ as }k\to+\infty,$$
we finally arrive at the desired estimate \eqref{Eq: energy estimate}.
\end{proof}
\begin{lemma}\label{Lem: parallel spinor}
     Given any constant spinor $\hat s_0\in C^\infty(E,\hat{\mathcal S}|_E)$, there is a spinor $\hat s\in H^1_{loc}(M,\bar g,\hat{\mathcal S})$ such that $\widehat\nabla\hat s=0$ and that $\hat s$ is asymptotically constant to $\hat s_0$ with respect to $\bar g$,
     where $\widehat\nabla$  denotes the induced spinor connection on $\hat{\mathcal S}$ with respect to the metric $\bar g$.
\end{lemma}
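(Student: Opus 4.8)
The plan is to construct $\hat s$ as a weak limit, as $\varepsilon\to 0$, of the approximation harmonic spinors $\hat s_\varepsilon$ furnished by Lemma~\ref{Lem: approximation spinor} on the complete manifolds $(M,g_\varepsilon)$. Under the equality assumption, the computation in the proof of Proposition~\ref{Prop: mass capacity} gives $m(M,g_\varepsilon,E)=\tfrac{2\varepsilon}{1+\varepsilon}\,\mathfrak c(M,g,E)\to 0$. Hence, fixing the constant spinor $\hat s_0$, for each $\varepsilon>0$ Lemma~\ref{Lem: approximation spinor} produces $\hat s_\varepsilon\in H^1_{loc}(M,g_\varepsilon,\hat{\mathcal S})$, asymptotically constant to $\hat s_0$ with respect to $g_\varepsilon$, with $\widehat{\mathcal D}^\varepsilon\hat s_\varepsilon=0$ and
$$\|\widehat\nabla^\varepsilon\hat s_\varepsilon\|^2_{L^2(M,g_\varepsilon,\hat{\mathcal S})}\le \tfrac{n-1}{2}\omega_{n-1}\,m(M,g_\varepsilon,E)\,\|\hat s_0\|_\infty^2=:\delta_\varepsilon\to 0.$$
I will then show that a subsequence of these spinors converges weakly in $H^1_{loc}(M,\bar g,\hat{\mathcal S})$ to a spinor $\hat s$ that is parallel and asymptotically constant to $\hat s_0$.

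The first ingredient is uniform control of the conformal factor. Since $u$ is a positive harmonic function with $u\to 1$ at the infinity of $E$, it is bounded between two positive constants on $\overline E$ and bounded below by a positive constant on each fixed compact set, and $\tfrac{u_\varepsilon}{u}-1=\tfrac{\varepsilon(1-u)}{u(1+\varepsilon)}$ together with its derivatives is $O(\varepsilon)$ on $\overline E$ and on any fixed compact set. Hence $g_\varepsilon=(u_\varepsilon/u)^{4/(n-2)}\bar g$ converges to $\bar g$ in $C^2$ uniformly on $\overline E$ (keeping a uniform asymptotically flat structure there) and in $C^\infty_{loc}$ on $M$, with the conformal factor trapped in $[1/2,2]$ on a given compact set once $\varepsilon$ is small. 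In particular the weighted norms $H^1_{-q^*}(E,g_\varepsilon,\hat{\mathcal S})$ are uniformly equivalent to $H^1_{-q^*}(E,\bar g,\hat{\mathcal S})$, and --- since the analytic inputs in the proof of Lemma~\ref{Lem: approximation spinor} (the weighted Sobolev inequality and the coercivity of the Callias operators used to control $\hat s_\varepsilon^*$, together with the thresholds governing the cut-offs $\psi_k$) depend only on this uniform asymptotic structure near $E$ --- the decomposition $\hat s_\varepsilon=\zeta\hat s_0+\hat s_\varepsilon^*$ with $\hat s_\varepsilon^*\in H^1_{-q^*}(E,g_\varepsilon,\hat{\mathcal S})$ satisfies $\|\hat s_\varepsilon^*\|_{H^1_{-q^*}(E,g_\varepsilon)}\le C$ uniformly in $\varepsilon$. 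Fixing $\omega=\{\tfrac{3}{2}<|x|<2\}\subset E$, this yields $\|\hat s_\varepsilon\|_{L^2(\omega,\bar g)}\le C$.

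Next I propagate this control over all of $M$ on fixed compact domains. Fix a smooth compact exhaustion $\Omega_1\subset\Omega_2\subset\cdots$ of $M$ with $\omega\subset\Omega_1$. The difference $\widehat\nabla-\widehat\nabla^\varepsilon$ is the zeroth order operator given (on the doubled bundle) by the conformal correction term in \eqref{Eq: connection relation} with $w=\tfrac{2}{n-2}\log(u_\varepsilon/u)$, so $\|\widehat\nabla-\widehat\nabla^\varepsilon\|_{L^\infty(\Omega_R)}=O(\varepsilon)$; combined with the uniform metric equivalence on $\Omega_R$ this gives $\|\widehat\nabla\hat s_\varepsilon\|_{L^2(\Omega_R,\bar g)}\le C_R\delta_\varepsilon^{1/2}+C_R\varepsilon\|\hat s_\varepsilon\|_{L^2(\Omega_R,\bar g)}$. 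A Poincaré inequality for sections over the connected compact domain $(\Omega_R,\bar g)$ --- valid because a $\widehat\nabla$-parallel spinor vanishing on $\omega$ vanishes identically --- gives $\|\hat s_\varepsilon\|_{L^2(\Omega_R,\bar g)}\le C_R\big(\|\widehat\nabla\hat s_\varepsilon\|_{L^2(\Omega_R,\bar g)}+\|\hat s_\varepsilon\|_{L^2(\omega,\bar g)}\big)$. Absorbing the $C_R\varepsilon$-term for $\varepsilon$ small depending on $R$, and then applying the interior elliptic estimate for the (uniformly elliptic) operator $\widehat{\mathcal D}^\varepsilon$ on $\Omega_R$, we obtain $\|\hat s_\varepsilon\|_{H^1(\Omega_{R-1},\bar g)}\le C_R'$ for all $\varepsilon<\varepsilon_R$. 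A diagonal argument over $R$ then extracts $\varepsilon_j\to 0$ with $\hat s_{\varepsilon_j}\rightharpoonup\hat s$ weakly in $H^1_{loc}(M,\bar g,\hat{\mathcal S})$ (hence strongly in $L^2_{loc}$), so in particular $\hat s\in H^1_{loc}(M,\bar g,\hat{\mathcal S})$.

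Finally I identify the limit. For each fixed $R$ one has $\|\widehat\nabla\hat s_{\varepsilon_j}\|_{L^2(\Omega_R,\bar g)}\le C_R\delta_{\varepsilon_j}^{1/2}+C_R\varepsilon_j\|\hat s_{\varepsilon_j}\|_{L^2(\Omega_R,\bar g)}\to 0$, while $\widehat\nabla\hat s_{\varepsilon_j}\rightharpoonup\widehat\nabla\hat s$ weakly in $L^2(\Omega_R,\bar g)$; lower semicontinuity of the norm forces $\widehat\nabla\hat s=0$ on $\Omega_R$, hence on all of $M$. On the end $E$ the spinors $\hat s_{\varepsilon_j}-\zeta\hat s_0=\hat s_{\varepsilon_j}^*$ are bounded in $H^1_{-q^*}(E,\bar g,\hat{\mathcal S})$, so along a further subsequence they converge weakly there to some $\hat s_\infty^*\in H^1_{-q^*}(E,\bar g,\hat{\mathcal S})$; uniqueness of weak limits gives $\hat s|_E=\zeta\hat s_0+\hat s_\infty^*$, whence $\hat s|_E-\hat s_0=\hat s_\infty^*-(1-\zeta)\hat s_0\in H^1_{-q^*}(E,\bar g,\hat{\mathcal S})$ (the last term being compactly supported in $E$), i.e. $\hat s$ is asymptotically constant to $\hat s_0$ with respect to $\bar g$. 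The main obstacle throughout is the uniformity in $\varepsilon$ of the constants --- especially the weighted estimate on $E$ inherited from Lemma~\ref{Lem: approximation spinor} and the absorption in the Poincaré step --- which is precisely why one works on a fixed compact exhaustion and lets $\varepsilon\to 0$ before enlarging the domain; this device also circumvents the potential incompleteness of $(M,\bar g)$ away from $E$.
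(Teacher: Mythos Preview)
Your proof is correct and takes essentially the same approach as the paper: apply Lemma~\ref{Lem: approximation spinor} along a sequence $\varepsilon_i\to 0$, compare $\widehat\nabla^{\varepsilon_i}$ with $\widehat\nabla$ via the conformal connection formula~\eqref{Eq: connection relation}, obtain uniform local $H^1$ bounds, and pass to a weak limit which is $\widehat\nabla$-parallel by lower semicontinuity and asymptotically constant by weak closedness of $H^1_{-q^*}$. The only organizational difference is that you anchor an $L^2$ bound on an annulus $\omega\subset E$ (appealing to uniformity-in-$\varepsilon$ of the constants in the proof of Lemma~\ref{Lem: approximation spinor}) and then propagate to compact sets via a compact-domain Poincar\'e inequality, whereas the paper works directly on open sets $U$ with $U\Delta E$ of compact closure, bounds $\|\widehat\nabla^{\varepsilon_i}\hat s_i^*\|_{L^2(U,g_{\varepsilon_i})}$ from the energy estimate~\eqref{Eq: energy estimate} and a triangle inequality, and closes with the weighted Poincar\'e inequality on $(U,\bar g)$; both devices accomplish the same thing.
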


\begin{proof}
Fix a real positive sequence $\varepsilon_i\to 0$ as $i\to\infty$ and a constant spinor $\hat s_0\in C^\infty(E,\hat{\mathcal S}|_E)$. From Lemma \ref{Lem: approximation spinor} we can construct spinors $$\hat s_i\in H^1_{loc}(M,g_{\varepsilon_i},\hat{\mathcal S}),$$ 
which is asymptotically constant to $\hat s_0$ with respect to $g_{\varepsilon_i}$, such that we have
$$\widehat{\mathcal D}^{\varepsilon_i}\hat s_i=0\mbox{ in }L^2(M,g_{\varepsilon_i},\hat{\mathcal S}),$$
and also the estimate
$$
\|\widehat\nabla^{\varepsilon_i}\hat s_i\|^2_{L^2(M,g_{\varepsilon_i},\hat{\mathcal S})}\leq \frac{n-1}{2}\omega_{n-1}\cdot m(M,g_{\varepsilon_i},E)\cdot\|\hat s_0\|_\infty^2\to 0\mbox{ as }i\to\infty.$$

Write
$$g_{\varepsilon_i}=\left(\frac{u_{\varepsilon_i}}{u}\right)^{\frac{4}{n-2}}\bar g.$$
From the relation \eqref{Eq: connection relation} we have
\begin{equation}\label{Eq: connection relation application}
    \widehat\nabla^\varepsilon_{(\cdot)}=\widehat\nabla_{(\cdot)}+\frac{1}{n-2}\cdot\hat c\left(\overline\nabla \log\left(\frac{u_{\varepsilon}}{u}\right)\right)\hat c(\cdot)+\frac{1}{n-2}\cdot\bar g\left(\overline\nabla \log\left(\frac{u_{\varepsilon}}{u}\right),\cdot\right),
\end{equation}
where we use $\widehat\nabla_{(\cdot)}$ and $\hat c(\cdot)$ to denote the induced spinor connection and the twisted Clifford multiplication on $\hat{\mathcal S}$ with respect to the metric $\bar g$. 

In the following, we restrict our attention on any fixed smooth open subset $U$ of $M$ such that $U\Delta E$ has compact closure. As before, we fix a smooth cut-off function $\zeta$ on $M$ such that $\zeta\equiv 1$ around the infinity of $E$ and $\zeta \equiv 0$ outside $E$, and consider the spinor
\begin{equation}\label{Eq: define spinor}
    \hat s_i^*:=\hat s_i-\zeta\hat s_0.
\end{equation}
It follows from the expansion of $u$ from Corollary \ref{Cor: function u}, the uniform bound of $u^{-1}$ on $U$, the fact $\varepsilon_i\to 0$ as $i\to+\infty$, and the relation \eqref{Eq: connection relation application} that we can derive
$$\|\widehat\nabla\hat s_i^*\|_{L^2(U,\bar g,\hat{\mathcal S})}^2\leq (1+o(1))\|\widehat\nabla^{\varepsilon_i}\hat s_i^*\|_{L^2(U,g_{\varepsilon_i},\hat{\mathcal S})}^2+o(1)\|\hat s_i^*\|^{2}_{L^{2}_{-q^*}(U,\bar g,\hat{\mathcal S})}$$
as $i\to+\infty$. 
On the other hand, it follows from the weighted Poincar\'e inequality (see \cite[Theorem 1.3]{Bartnik1986} for instance) and the Kato inequality that
$$\|\widehat\nabla\hat s_i^*\|^2_{L^2(U,\bar g,\hat{\mathcal S})}\geq \|\widehat\nabla\widehat{|\hat s_i^*|}\|^2_{L^2(U,\bar g)}\geq C\|\hat s_i^*\|^2_{L^{2}_{-q^*}(U,\bar g,\hat{\mathcal S})}.$$
Therefore, we conclude
$$\|\widehat\nabla\hat s_i^*\|^2_{L^2(U,\bar g,\hat{\mathcal S})}+\|\hat s_i^*\|^2_{L^{2}_{-q^*}(U,\bar g,\hat{\mathcal S})}\leq C\|\widehat\nabla^{\varepsilon_i}\hat s_i^*\|_{L^2(U,g_{\varepsilon_i},\hat{\mathcal S})}$$
for $i$ large enough. Again, using the expansion of $u$ from Corollary \ref{Cor: function u}, the uniform bound of $u^{-1}$ on $U$, the fact $\varepsilon_i\to 0$ as $i\to+\infty$, and the relation \eqref{Eq: connection relation application} we can derive from \eqref{Eq: define spinor} that
\[
\begin{split}\|\widehat\nabla^{\varepsilon_i}\hat s_i^*\|&_{L^2(U,g_{\varepsilon_i},\hat{\mathcal S})}\leq \|\widehat\nabla^{\varepsilon_i}\hat s_i\|_{L^2(U,g_{\varepsilon_i},\hat{\mathcal S})}\\
&\qquad+(1+o(1))\|\widehat\nabla(\zeta\hat s_0)\|_{L^2(U,\bar g,\hat{\mathcal S})}+o(1)\|\overline\nabla u\|_{L^2(U,\bar g)}
\end{split}\]
as $i\to+\infty$.
 In particular, all the spinors $\hat s_i^*$ are locally uniformly bounded in $H^1_{loc}(U,\bar g,\hat{\mathcal S})$. Based on a use of the diagonal argument, we conclude that the spinors $\hat s_i^*$ converge to some limit spinor $\hat s_\infty^*$ weakly in $H^1_{loc}(M,\bar g,\hat{\mathcal S})$ up to a subsequence. 

 Define
 $$\hat s_\infty:=\hat s_\infty^*+\zeta\hat s_0.$$
 Clearly, $\widehat\nabla\hat s_i$ converge to $\widehat\nabla\hat s_\infty$ weakly in $L^2_{loc}(M,\bar g,\hat{\mathcal S})$. A similar analysis as before yields
 \[
 \begin{split}
     \|\widehat\nabla\hat s_i\|_{L^2(U,\bar g,\hat{\mathcal S})}\leq (1&+o(1))\|\widehat\nabla^{\varepsilon_i}\hat s_i\|_{L^2(U,g_{\varepsilon_i},\hat{\mathcal S})}\\&\,\,\,+o(1)\|\hat s_i^*\|_{L^{2}_{-q^*}(U,\bar g,\hat{\mathcal S})}+o(1)\|\overline \nabla u\|_{L^2(U,\bar g)}\to 0
 \end{split}
 \]
 as $i\to+\infty$. Therefore, we have $\widehat\nabla\hat s_\infty=0$. Note that the weak convergence also yields $$\hat s_\infty-\hat s_0\in H^{1}_{-q^*}(E,\bar g,\hat{\mathcal S}).$$ The proof is completed by taking $\hat s=\hat s_\infty$.
\end{proof}

Now we are ready to prove the flatness of $(M,\bar g)$.
\begin{proposition}
$(M,\bar g)$ is flat.
\end{proposition}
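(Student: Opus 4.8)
The plan is to produce a global $\widehat\nabla$-parallel frame of the spinor bundle $\hat{\mathcal S}$ over $(M,\bar g)$ and then read off flatness of $\bar g$ from the resulting vanishing of the spinor curvature. The two ingredients are Lemma~\ref{Lem: parallel spinor} and the compatibility of $\widehat\nabla$ with $\widehat{\langle\cdot,\cdot\rangle}$. Concretely, I would first fix a basis $\hat s_0^{(1)},\dots,\hat s_0^{(N)}$ of $\Delta_n\oplus\Delta_n$, with $N=\dim_{\mathbb C}(\Delta_n\oplus\Delta_n)$, regarded as constant spinors on $E$ through the trivialization \eqref{Eq: isomorphism}, and apply Lemma~\ref{Lem: parallel spinor} to each of them to obtain spinors $\hat s^{(a)}\in H^1_{loc}(M,\bar g,\hat{\mathcal S})$ with $\widehat\nabla\hat s^{(a)}=0$ and $\hat s^{(a)}|_E-\hat s_0^{(a)}\in H^1_{-q^*}(E,\bar g,\hat{\mathcal S})$. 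In local coordinates the equation $\widehat\nabla\hat s^{(a)}=0$ reads $\partial_\mu\hat s^{(a)}=-\Gamma_\mu\hat s^{(a)}$ with smooth coefficients $\Gamma_\mu$, so a routine bootstrap promotes each $\hat s^{(a)}$ to a smooth section; moreover the compatibility relation gives $X\,\widehat{\langle\hat s^{(a)},\hat s^{(a)}\rangle}=2\,\widehat{\langle\widehat\nabla_X\hat s^{(a)},\hat s^{(a)}\rangle}=0$, so $\widehat{|\hat s^{(a)}|}$ is constant on $M$.

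The crucial point is that $\hat s^{(1)}(x),\dots,\hat s^{(N)}(x)$ are linearly independent at every $x\in M$, hence form a basis of $\hat{\mathcal S}_x$. To see this, suppose $\hat t:=\sum_a c_a\hat s^{(a)}$ vanishes at some point of $M$. Being a $\widehat\nabla$-parallel section with a zero and constant norm, $\hat t\equiv 0$ on the connected manifold $M$. On the other hand $\hat t|_E-\sum_a c_a\hat s_0^{(a)}=\sum_a c_a(\hat s^{(a)}|_E-\hat s_0^{(a)})\in H^1_{-q^*}(E,\bar g,\hat{\mathcal S})$, so the constant spinor $\hat t_0:=\sum_a c_a\hat s_0^{(a)}$ lies in $H^1_{-q^*}(E,\bar g,\hat{\mathcal S})$. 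But with $q^*=\tfrac{n-2}{2}$ the weight exponent in $\|\cdot\|_{L^2_{-q^*}}$ equals $2q^*-n=-2$, and $\int_{E}\rho^{-2}\,\mathrm d\mu_{\bar g}=\infty$ for $n\geq 3$, so no nonzero constant spinor lies in $L^2_{-q^*}(E,\bar g,\hat{\mathcal S})$. Hence $\hat t_0=0$, and since the $\hat s_0^{(a)}$ form a basis, all $c_a$ vanish.

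Given that the $\hat s^{(a)}$ form a global parallel frame of $\hat{\mathcal S}$, for all vector fields $X,Y$ one has
$$\widehat R(X,Y)\hat s^{(a)}=\widehat\nabla_X\widehat\nabla_Y\hat s^{(a)}-\widehat\nabla_Y\widehat\nabla_X\hat s^{(a)}-\widehat\nabla_{[X,Y]}\hat s^{(a)}=0$$
for every $a$, so the curvature $\widehat R$ of $\widehat\nabla$ vanishes identically. Since the standard curvature identity gives $\widehat R(\bar e_k,\bar e_l)=\tfrac14\sum_{i,j}\langle R(\bar g)(\bar e_k,\bar e_l)\bar e_i,\bar e_j\rangle\,\hat c(\bar e_i)\hat c(\bar e_j)$ for a local $\bar g$-orthonormal frame $\{\bar e_i\}$, and the linear map $\Lambda^2 T_xM\to\operatorname{End}(\hat{\mathcal S}_x)$ sending $\bar e_i\wedge\bar e_j$ to $\hat c(\bar e_i)\hat c(\bar e_j)$ is injective for $n\geq 3$ (its image lies in $\mathfrak{spin}(n)\subset\mathrm{Cl}^{0}(T_xM,\bar g_x)$, which acts faithfully on $\Delta_n$), we conclude $R(\bar g)\equiv 0$; that is, $(M,\bar g)$ is flat.

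I expect the only genuinely delicate step to be the linear-independence claim, since it simultaneously uses connectedness of $M$, the constancy of the norm of a $\widehat\nabla$-parallel spinor, and the strict positivity of the weight $q^*$ (which is exactly what prevents a nonzero constant spinor from being weighted-$L^2$ on the asymptotically flat end). It is worth stressing that none of the steps invokes completeness of $(M,\bar g)$ — parallel transport, the vanishing of a parallel section from a single zero, and the algebraic Clifford identity are all local or path-based — so the possible incompleteness of $\bar g$ flagged at the start of Section~\ref{Sec: rigidity} plays no role here.
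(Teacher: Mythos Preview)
Your argument is correct and the linear-independence step is handled cleanly; the weighted-$L^2$ obstruction to nonzero constant spinors is exactly the mechanism the paper uses as well. The route, however, is genuinely different from the paper's. The paper fixes a single nonzero constant spinor $\hat s_0$, produces the parallel spinor $\hat s_0^*$ asymptotic to it and parallel spinors $\hat s_i^*$ asymptotic to $\hat c(\bar e_i)\hat s_0$, and then builds \emph{parallel vector fields} $V_i$ via the bilinear pairing $\bar g(V_i,X)=\widehat{\langle \hat c(X)\hat s_0^*,\hat s_i^*\rangle}$; linear independence of the $V_i$ is forced by the same weighted-$L^2$ trick, and flatness follows from the existence of a global parallel orthonormal frame of $TM$. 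By contrast, you trivialize the spinor bundle itself by a full basis of parallel spinors and deduce $R(\bar g)=0$ from the spinor curvature identity together with faithfulness of the $\mathfrak{spin}(n)$-action on $\Delta_n$. Your approach is arguably more direct and uses only $n+1$ applications of Lemma~\ref{Lem: parallel spinor} in spirit versus $N=2\cdot 2^{\lfloor n/2\rfloor}$, but it trades the elementary ``parallel frame $\Rightarrow$ flat'' step for an algebraic fact about the Clifford action; the paper's approach has the mild advantage of producing the parallel frame on $TM$ explicitly, which is what one would want anyway for the developing-map argument in Proposition~\ref{Prop: underlying space}.
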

\begin{proof}
    Fix a non-zero constant spinor $\hat s_0\in C^\infty(E,\hat{\mathcal S}|_E)$. Using Lemma \ref{Lem: parallel spinor} we can construct a spinor $\hat s_0^*\in H^1_{loc}(M,\bar g,\hat{\mathcal S})$ such that $\widehat\nabla\hat s_0^*=0$ and that $\hat s_0^*$ is asymptotically constant to $\hat s_0$ with respect to $\bar g$. In particular, we have
    \begin{equation}\label{Eq: parallel 1}
        \hat s_0^*-\hat s_0\in L^2_{-q^*}(E,\bar g,\hat{\mathcal S})
    \end{equation}
     Recall that $\{\bar e_i\}$ denotes the $\bar g$-orthonormal frame on $E$ which comes from orthonormalizing the coordinate frame $\{\partial_{x_i}\}$ with respect to the metric $\bar g$. By Lemma \ref{Lem: parallel spinor} we can construct spinors $\hat s_i^*\in H^1_{loc}(M,\bar g,\hat{\mathcal S})$ such that $\widehat\nabla\hat s_i^*=0$ and 
    \begin{equation}\label{Eq: parallel 2}
        \hat s_i^*-\hat c(\bar e_i)\hat s_0\in L^2_{-q^*}(E,\bar g,\hat{\mathcal S}),
    \end{equation}where $\hat c(\cdot)$ denotes the twisted Clifford multiplication on $\hat{\mathcal S}$ with respect to the metric $\bar g$. 
    
    Define the vector fields $V_i$ by
    $$\bar g(V_i,X)=\widehat{\langle \hat c(X)\hat s_0^*,\hat s_i^*\rangle}.$$
    From a direct computation we have
    $$\bar g(\overline\nabla V_i,X)=\widehat{\langle \hat c(X)\widehat\nabla\hat s_0^*,\hat s_i^*\rangle}+\widehat{\langle \hat c(X)\hat s_0^*,\widehat\nabla\hat s_i^*\rangle}=0.$$
    This means that $V_i$ are smooth $\bar g$-parallel vector fields on $M$.

    Now we show that $V_i$ are linearly independent on the whole $M$. Otherwise, we can find real constants $c_i$ such that 
    $$\sum_ic_iV_i\equiv 0\mbox{ but }\sum_ic_i^2\neq 0.$$
    Take $$X=\sum_i c_i\bar e_i.$$
    From \eqref{Eq: parallel 1} and \eqref{Eq: parallel 2} we can derive
    $$0\equiv\sum_i c_i\bar g(V_i,X)=\widehat{\langle \hat c(X)\hat s_0^*,\hat c(X)\hat s_0^*+\hat\eta\rangle}\mbox{ where }\hat\eta\in L^2_{-q^*}(E,\bar g,\hat{\mathcal S}).$$
    In particular, we have
    $$\widehat{|\hat\eta|}\geq \widehat{|\hat c(X)\hat s_0^*|}=\left(\sum_ic_i^2\right)^{\frac{1}{2}}\widehat{|\hat s_0^*|}.$$
    Then we can derive 
    $$\|\hat\eta\|_{L^2_{-q^*}(E,\bar g,\hat{\mathcal S})}^2\geq\left(\sum_ic_i^2\right)\widehat{|\hat s_0^*|}^2\int_E \rho^{-2}\,\mathrm d\mu_{\bar g}=+\infty,$$
 which leads to a contradiction.

    From previous discussion, we know that there exists a global $\bar g$-parallel orthonormal frame on $M$. Therefore, $(M,\bar g)$ is flat.
\end{proof}
\begin{corollary}\label{Cor: scalar flat}
    $(M,g)$ is scalar-flat.
\end{corollary}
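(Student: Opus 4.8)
\emph{Plan.} The statement follows immediately from the previous proposition together with the conformal transformation law for scalar curvature and the harmonicity of $u$. Recall that $\bar g=u^{\frac{4}{n-2}}g$ where $u$ is the positive harmonic function from Corollary \ref{Cor: function u}, and that the previous proposition asserts $(M,\bar g)$ is flat; in particular its scalar curvature vanishes identically, $R(\bar g)\equiv 0$.

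\emph{Key step.} I would invoke the standard formula relating the scalar curvatures of two conformally related metrics $\bar g=u^{\frac{4}{n-2}}g$, namely
$$R(\bar g)=u^{-\frac{n+2}{n-2}}\left(-\frac{4(n-1)}{n-2}\,\Delta_g u+R(g)\,u\right),$$
valid pointwise on $M$ since $u>0$ is smooth. Because $u$ is harmonic on $(M,g)$, the term $\Delta_g u$ drops out, leaving
$$R(\bar g)=u^{-\frac{n+2}{n-2}}\cdot R(g)\cdot u=u^{-\frac{4}{n-2}}\,R(g).$$
Combining this with $R(\bar g)\equiv 0$ and the strict positivity of $u$ forces $R(g)\equiv 0$ on $M$, which is precisely the claim.

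\emph{Main obstacle.} There is essentially no analytic obstacle here: the only point worth emphasizing is that $u$ is taken to be genuinely $g$-harmonic (as provided by Corollary \ref{Cor: function u}), rather than a solution of a Yamabe-type equation with a zeroth-order potential; this is exactly what makes the $R(g)u$ term in the transformation law survive and turns the vanishing of $R(\bar g)$ into the vanishing of $R(g)$. One should also note that the identity above holds on all of $M$, including the region where $(M,\bar g)$ might a priori fail to be complete, since the conformal factor and the formula are purely local.
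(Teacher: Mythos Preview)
Your argument is correct and coincides with the paper's own proof: both use the conformal scalar curvature formula for $\bar g=u^{\frac{4}{n-2}}g$, invoke $R(\bar g)=0$ from the flatness of $(M,\bar g)$, drop the Laplacian term by harmonicity of $u$, and conclude $R(g)=0$ from the positivity of $u$. The paper's version is terser but the logic is identical.
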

\begin{proof}
    Since $(M,\bar g)$ is flat, we have
    $$R_{\bar g}=u^{-\frac{n+2}{n-2}}\left(-\Delta_gu+\frac{n-2}{4(n-1)}R_gu\right)=0.$$
    Recall that $u$ is a positive harmonic function on $(M,g)$ and this yields $R_g=0$.
\end{proof}

\subsection{The topology}
In this subsection, we will make an analysis on the topology of $M$ in the rigidity case.

 \begin{lemma}
 $M$ is simply-connected.
 \end{lemma}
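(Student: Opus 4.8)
The plan is to realise the universal cover of $M$ as $\mathbb R^n$ with a bounded closed set removed, and then to observe that this forces $\pi_1(M)$ to be trivial because $\mathbb R^n$ cannot contain two disjoint exterior regions.

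Let $p\colon\widetilde M\to M$ be the universal cover. By Corollary \ref{Cor: scalar flat} the metric $g$ is scalar-flat, and since $(M,g)$ is globally conformal to the flat metric $\bar g$ it is locally conformally flat; as $(M,g)$ is complete, $(\widetilde M,p^*g)$ is a complete, spin, locally conformally flat $n$-manifold with vanishing scalar curvature. The first step is to invoke the structure theorem of Schoen--Yau \cite{Schoen-Yau} (in the spin case, or equivalently its extensions \cite{Lesourd-Unger-Yau,LLU23,Zhu23,BC2005,CZ2024}, which supply the underlying positive mass theorem): the developing map of the conformally flat structure is injective, hence identifies $\widetilde M$ conformally with $S^n\setminus S$ for a closed set $S$ with $\dim_{\h}S\le q^*=\frac{n-2}{2}$. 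Because $p^*\bar g$ is genuinely flat and lies in this conformal class, and a flat metric conformal to the round metric on a domain of $S^n$ is, in suitable coordinates, a constant multiple of the Euclidean metric on a domain of $\mathbb R^n$, this identification becomes an isometry of $(\widetilde M,p^*\bar g)$ onto $\mathbb R^n\setminus S'$ for a closed set $S'$, where the point of $S$ corresponding to the infinity of the asymptotically flat end is the Euclidean infinity. The asymptotic flatness of $\bar g$ on $E$ — which follows from the expansion of $u$ in Corollary \ref{Cor: function u} — guarantees that near each lift of $E$ the metric is Euclidean to leading order, so each such lift is developed onto a set containing $\{|y|>R\}$ for some $R$; hence $S'\subseteq\bar B_R$ is bounded.

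Now suppose $\pi_1(M)\ne1$. Since $n\ge3$ the end $E\cong\mathbb R^n\setminus\bar B_1$ is simply connected, so $p^{-1}(E)=\bigsqcup_\alpha\widetilde E_\alpha$ is a disjoint union of at least two copies of $E$, each mapped diffeomorphically by $p$. By the previous paragraph the developing map sends each $\widetilde E_\alpha$ onto a set containing some exterior region $\{|y|>R_\alpha\}$. But the developing map is injective, so the images of two distinct lifts $\widetilde E_\alpha\ne\widetilde E_\beta$ must be disjoint — impossible, since $\{|y|>R_\alpha\}$ and $\{|y|>R_\beta\}$ always meet in $\mathbb R^n$. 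This contradiction shows $p^{-1}(E)$ is connected, so $\pi_1(M)=1$.

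The main obstacle is the developing-map step, i.e. the injectivity of the developing map of $(\widetilde M,p^*g)$ together with the Hausdorff bound on its complement. This is exactly where the possible incompleteness of $\bar g$ on $M$ is felt: one must rule out that the interior region $p^{-1}(M\setminus E)$ is developed over parts of $\mathbb R^n$ already covered near the ends, which would make the developing map many-to-one. For a self-contained argument in place of citing \cite{Schoen-Yau}, I would proceed as there: extend the developing map to the metric completion of $(\widetilde M,p^*g)$, show its image omits only a closed set disjoint from a neighbourhood of infinity, use the spin positive mass theorem to bound the Hausdorff dimension of that set by $q^*$, and then deduce injectivity by a monotonicity and unique-continuation argument along radial geodesics. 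The same analysis yields the full conclusion that $(M,g)$ is harmonically conformal to $\mathbb R^n\setminus S$ with $\dim_{\h}S\le\frac{n-2}{2}$ and, via the codimension bound $n-\dim_{\h}S>\tfrac{n}{2}$, the vanishing of $\pi_i(M)$ for $i\le n-1-\lfloor n/2\rfloor$.
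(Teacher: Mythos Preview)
Your argument is essentially correct but follows a genuinely different route from the paper. The paper never invokes the Schoen--Yau developing-map theorem at this stage; instead it gives a short capacity-comparison argument internal to the framework already built. Concretely: if $\pi_1(M)\neq 1$, the universal cover $(\hat M,\hat g)$ contains at least two lifts $\hat E_1,\hat E_2$ of $E$, and $(\hat M,\hat g,\hat E_1)$ is itself a generalized asymptotically flat manifold. The harmonic function $\hat u$ from Corollary~\ref{Cor: function u} (built on $\hat M$ relative to $\hat E_1$) is dominated by the lift $\tilde u$ of the corresponding function on $M$ by the maximum principle, and since $\hat u\to 0$ at the infinity of $\hat E_2$ while $\tilde u\to 1$ there, the domination is strict. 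Comparing expansions at $\hat\infty_1$ gives $\mathfrak c(\hat M,\hat g,\hat E_1)>\mathfrak c(M,g,E)$, whence $m(\hat M,\hat g,\hat E_1)=m(M,g,E)=2\mathfrak c(M,g,E)<2\mathfrak c(\hat M,\hat g,\hat E_1)$, contradicting Proposition~\ref{Prop: mass capacity} applied to the cover. This is entirely elementary once the mass-capacity inequality is in hand, and it cleanly separates the simple-connectedness from the Liouville-type embedding, which the paper only brings in afterwards in Proposition~\ref{Prop: underlying space}.

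Your approach instead front-loads the Liouville theorem: you realise $(\widetilde M,p^*\bar g)$ isometrically inside $\mathbb R^n$ via Schoen--Yau and then argue that two lifts of $E$ would occupy overlapping exterior regions. This is valid, but note that the step ``each such lift is developed onto a set containing $\{|y|>R\}$'' is not adequately justified by ``the metric is Euclidean to leading order.'' What you actually need is that $(\overline E,\bar g)$ is complete (true, since $u$ is bounded away from zero on $\overline E$), so that the topological boundary of $\Phi(\widetilde E_\alpha)$ in $\mathbb R^n$ equals the compact set $\Phi(\partial\widetilde E_\alpha)$; combined with the infinite $\bar g$-volume of $E$, this forces $\Phi(\widetilde E_\alpha)$ to contain the unbounded component of $\mathbb R^n\setminus\Phi(\partial\widetilde E_\alpha)$. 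This is precisely the argument the paper uses later in Proposition~\ref{Prop: underlying space}. In effect you are proving that proposition on the cover and reading off $\pi_1=1$ as a corollary, whereas the paper obtains $\pi_1=1$ by a two-line capacity argument and defers the Liouville machinery to where it is unavoidable.
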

 \begin{proof}
 Suppose that $M$ is not simply-connected. We consider the universal cover $(\hat M,\hat g)$ of $(M,g)$. Since $E$ is simply-connected, there are at least two copies of $E$ in $\hat M$, denoted by $\hat E_1$ and $\hat E_2$ respectively. Note that $(\hat M,\hat g,\hat E_1)$ is a generalized asymptotically flat manifold. It follows from Corollary \ref{Cor: function u} we can find a smooth positive harmonic function $\hat u$ on $(\hat M,\hat g)$ with the expansion
 $$\hat u(\hat x)=1-\mathfrak c(\hat M,\hat g,\hat E_1)\cdot|\hat x|^{2-n}+\hat w_1\mbox{ as }\hat x\to \widehat\infty_1,$$
 where $\widehat\infty_1$ denotes the infinity of $\hat E_1$ and $\hat w_1=O_2(|\hat x|^{2-n-\tau})$ as $\hat x\to\widehat\infty_1$. In a similar way, we can find a positive harmonic function $u$ on $(M, g)$ with the expansion
 $$ u(x)=1-\mathfrak c( M, g, E)\cdot |x|^{2-n}+w\mbox{ as }x\to \infty.$$
 Let $\tilde u$ denote the lift of $u$ on $(\hat M,\hat g)$. Recall that the function $\hat u$ is constructed as a limit of harmonic functions vanishing on inner boundary and converging to one at the infinity of $\hat E_1$. It follows from the maximum principle that we have $\hat u\leq \tilde u$. Using the asymptotical flatness of $(\hat E_2,\hat g)$ we are able to show $\hat u(\hat x)\to 0$ as $\hat x\to \widehat \infty_2$, and so we have $\hat u<\tilde u$ on $\hat M$. This yields
 $$2\mathfrak c(\hat M,\hat g,\hat E_1)>2\mathfrak c(M,g,E)=m(M,g,E)=m(\hat M,\hat g,\hat E_1),$$
 and we obtain a contradiction to Proposition \ref{Prop: mass capacity}.
 \end{proof}

\begin{proposition}\label{Prop: underlying space}
    There is a bounded closed subset $ S$ of $\mathbb R^n$ with Hausdorff dimension no greater than $(n-2)/2$ such that $(M,\bar g)$ is isometric to $\mathbb R^n\setminus S$.
\end{proposition}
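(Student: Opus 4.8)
The plan is to realize $(M,\bar g)$ explicitly through a developing map and then identify the complement of its image. \textbf{Step 1 (developing map).} Since $M$ is simply connected (the Lemma above) and $(M,\bar g)$ is flat with a global $\bar g$-parallel orthonormal frame $\{V_i\}$ (the previous Proposition), the dual coframe $\omega^i=\bar g(V_i,\cdot)$ consists of parallel, hence closed, $1$-forms, so I can write $\omega^i=\mathrm d\Phi^i$ and set $\Phi=(\Phi^1,\dots,\Phi^n)\colon M\to\mathbb R^n$; then $\Phi^*\delta=\sum_i(\omega^i)^2=\bar g$, i.e. $\Phi$ is a local isometry onto Euclidean space. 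Using that the $V_i$ were built from parallel spinors that are asymptotically constant on $E$ (promoting the weighted decay to uniform decay by elliptic estimates for the $\bar g$-parallel $V_i$), together with the expansion $u=1-\mathfrak c(M,g,E)|x|^{2-n}+O_2(|x|^{2-n-\tau})$ from Corollary \ref{Cor: function u} which makes $\bar g=u^{4/(n-2)}g$ asymptotically Euclidean on $E$, I would show $D\Phi\to I$ and, after composing with a translation, $\Phi(x)=x+o(1)$ as $x\to\infty$ on $E$. In particular $\Phi$ restricts to a diffeomorphism from a neighbourhood of infinity in $E$ onto $\mathbb R^n\setminus\bar B_{R'}$ for some $R'>0$.

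\textbf{Step 2 (injectivity and image).} Next I would prove $\Phi$ is injective, so that it is a diffeomorphism onto the open set $\Omega:=\Phi(M)\supset\mathbb R^n\setminus\bar B_{R'}$ and $S:=\mathbb R^n\setminus\Omega$ is closed and contained in $\bar B_{R'}$; this gives the isometry $\Phi\colon(M,\bar g)\to(\mathbb R^n\setminus S,\delta)$ with $S$ closed and bounded. Here I would follow the injectivity argument of Schoen--Yau \cite{Schoen-Yau}: the existence of a double point, transported through $\Phi$, would contradict either the completeness of $g=u^{-4/(n-2)}\Phi^*\delta$ or the maximum principle for the positive $g$-harmonic function $u$; equivalently, one constructs the inverse of $\Phi$ by lifting Euclidean paths through the local isometry $\Phi$, the lifts staying in $M$ precisely because their $g$-length remains finite away from the preimage of the missing set, with the triviality of $\pi_1(\mathbb R^n)$ making the inverse single-valued.

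\textbf{Step 3 (the Hausdorff dimension bound).} Finally I would estimate $\dim_{\mathcal H}S$. Set $v:=(u\circ\Phi^{-1})^{-1}>0$ on $\mathbb R^n\setminus S$; then $\Phi_*g=v^{4/(n-2)}\delta$, and since $(M,g)$ is scalar-flat by Corollary \ref{Cor: scalar flat} the conformal transformation law for scalar curvature forces $\Delta_\delta v=0$, i.e. $v$ is a positive harmonic function on $\mathbb R^n\setminus S$, with $v=1+\mathfrak c(M,g,E)|y|^{2-n}+O_2(|y|^{2-n-\tau})$ near infinity. Because $(M,g)$ is complete and $\Phi$ is an isometry onto $(\mathbb R^n\setminus S,v^{4/(n-2)}\delta)$, this metric is complete, so $\int_\gamma v^{2/(n-2)}\,|\mathrm dy|=\infty$ for every path $\gamma$ in $\mathbb R^n\setminus S$ limiting onto $S$, which pins down a definite blow-up rate of $v$ at $S$. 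Then, following the capacity/covering estimate of Schoen--Yau \cite{Schoen-Yau}, I would cover $S$ by small balls $B(z_j,r_j)$, use the Harnack inequality for the positive harmonic function $v$ on dyadic annuli around $S$ to control its oscillation, and combine this with the divergence of $\int v^{2/(n-2)}$ to bound $\sum_j r_j^{(n-2)/2}$ uniformly; letting the cover shrink yields $\mathcal H^{(n-2)/2}(S)<\infty$, hence $\dim_{\mathcal H}S\le(n-2)/2$. The exponent is transparent at the heuristic level: if $S$ were $d$-dimensional carrying a measure comparable to $\mathcal H^d$, then $v(y)\sim\dist(y,S)^{\,2-n+d}$ and $\int\dist^{(2-n+d)\cdot 2/(n-2)}\,\mathrm d(\dist)$ diverges exactly when $2d\le n-2$.

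I expect \textbf{Step 3} to be the main obstacle. Steps 1--2 amount to bookkeeping around the developing map together with the known Schoen--Yau injectivity mechanism, whereas the sharp dimension bound $(n-2)/2$ requires the delicate capacity/covering argument and careful use of the Harnack inequality: for a general closed singular set $S$ one cannot assume the harmonic measure behaves like a Hausdorff measure, and the bound must be extracted purely from the growth of $v$ that is forced by the completeness of $g$.
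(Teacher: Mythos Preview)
Your Steps 1--2 are essentially the paper's proof: both build the developing map from the simply-connected flat $(M,\bar g)$ into $\mathbb R^n$ and then invoke the Liouville theorem for injectivity. The paper is terser---it composes with stereographic projection to get a conformal map $(M,g)\to\mathbb S^n$ and cites \cite{Schoen-Yau,Chodosh-Li} as a black box; boundedness of $S$ is then read off from the infinite volume of $\Psi(E)$. One caution: the explicit mechanism you sketch in Step~2 (lifting Euclidean paths through $\Phi$ and using $g$-completeness to keep the lifts inside $M$) is not a correct paraphrase of the Schoen--Yau argument. Path-lifting through a local isometry needs completeness of the source, and $(M,\bar g)$ is precisely where completeness fails; your repair via $g$-completeness is circular, since bounding $u^{-2/(n-2)}$ along a lift amounts to knowing the Euclidean path avoids the singular set you have not yet defined. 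The genuine Schoen--Yau injectivity argument passes through the positive mass theorem. Since you already cite \cite{Schoen-Yau}, just invoke the Liouville theorem directly, as the paper does.

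The substantive divergence is Step~3. The paper does not run a covering argument at all: having realized $(M,g)$ as a complete scalar-flat metric conformal to a domain in $\mathbb S^n$, it quotes Karakhanyan's characterization \cite{Karakhanyan}, which says the complement of such a domain has vanishing $(1+2/n,\,n/2)$-Bessel capacity, and then standard potential theory gives $\dim_{\mathcal H}S\le(n-2)/2$ in one line. Your proposed route---Harnack on dyadic annuli plus the divergence of $\int v^{2/(n-2)}$ forced by completeness---is essentially the original Schoen--Yau dimension estimate and does yield the same exponent, but it is considerably more work to make rigorous for a general closed $S$. The paper's shortcut via \cite{Karakhanyan} is worth knowing.
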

\begin{proof}
    Since $(M,\bar g)$ is simply-connected and flat, we are able to construct an isometric immersion
        $$\Psi:(M,\bar g)\to \mathbb R^n.$$ Composed with the stereographic projection, we finally obtain a conformal map from $(M,g)$ to $\mathbb S^n$, which must be an embedding due to the Liouville theorem \cite{Schoen-Yau,Chodosh-Li}. In particular, $\Psi$ is an isometric embedding. Denote $$S=\mathbb R^n\setminus\Psi(M).$$ We are going to show the desired properties of $S$.

    It is clear that $S$ is a closed subset of $\mathbb R^n$  since the isometric embedding $\Psi$ is an open map. To see that $S$ is bounded, notice that $\Psi(E)$ is a smooth region of $\mathbb R^n$ with $\partial \Psi(E)=\Psi(\partial E)$, which has infinite volume. This implies that $\Psi(E)$ is the unbounded component of $\mathbb R^n\setminus \Psi(\partial E)$ and so $S$ is bounded.

    Furthermore, it follows from the characterization result by Karakhanyan \cite{Karakhanyan} for domains on $n$-sphere conformal to a complete scalar-flat manifold that the $(1+2/n, n/2)$-Bessel capacity of $S$ is zero. Consequently, its Hausdorff dimension is at most $(n-2)/2$.
\end{proof}

\begin{corollary}\label{Cor: vanishing}
We have
$$\pi_i(M)=0\mbox{ for all }1\leq i\leq n-1-\left\lfloor\frac{n}{2}\right\rfloor.$$
\end{corollary}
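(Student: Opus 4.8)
The plan is to feed in Proposition~\ref{Prop: underlying space}, which tells us that $M$ is diffeomorphic to $\Omega:=\mathbb R^n\setminus S$ for a compact set $S\subset\mathbb R^n$ of Hausdorff dimension $\dim_{\mathcal H}S\le\frac{n-2}{2}$, and then to argue that an obstruction this thin is invisible to low-dimensional homotopy. The first thing I would record is the bookkeeping identity: for an integer $i$, the hypothesis $1\le i\le n-1-\lfloor n/2\rfloor$ is equivalent to $i+1\le\lceil n/2\rceil$ (since $\lceil n/2\rceil+\lfloor n/2\rfloor=n$), which in turn is equivalent to the dimension inequality $(i+1)+\frac{n-2}{2}<n$. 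This last inequality is exactly the statement that an $(i+1)$-dimensional disk together with $S$ is still too small to fill up $\mathbb R^n$, and it is what drives everything.

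The main step is a general-position argument carried out by translation rather than by transversality. Starting from an arbitrary continuous map $\alpha\colon\mathbb S^i\to\Omega$, I would first homotope it inside the open manifold $\Omega$ to a smooth map, and then use the contractibility of $\mathbb R^n$ to extend it to a smooth map $A\colon\mathbb D^{i+1}\to\mathbb R^n$. The goal is to perturb $A$ so that its image misses $S$. The set of bad translation vectors is $S-A(\mathbb D^{i+1})$, which is a Lipschitz image of $S\times\mathbb D^{i+1}$ and therefore has Hausdorff dimension at most $\dim_{\mathcal H}S+(i+1)<n$, hence Lebesgue measure zero. Since $\alpha(\mathbb S^i)$ is compact and disjoint from the closed set $S$, we may choose a vector $v$ with $0<|v|<\dist(\alpha(\mathbb S^i),S)$ avoiding this null set. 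Then $A+v$ maps $\mathbb D^{i+1}$ into $\Omega$ and restricts to $\alpha+v$ on $\mathbb S^i$, so $\alpha+v$ is null-homotopic in $\Omega$; meanwhile the straight-line homotopy $t\mapsto\alpha+tv$ stays in the $|v|$-tube around $\alpha(\mathbb S^i)$, hence in $\Omega$, so $\alpha$ and $\alpha+v$ define the same class in $\pi_i(\Omega)$. Concatenating gives $[\alpha]=0$ in $\pi_i(\Omega)\cong\pi_i(M)$.

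I expect the only real subtlety to be precisely the reason for using translations: $S$ is merely a closed set with a Hausdorff-dimension bound, not a submanifold, so one cannot invoke Sard's theorem or transversality directly; the replacement is the elementary estimate $\dim_{\mathcal H}(S\times\mathbb D^{i+1})\le\dim_{\mathcal H}S+(i+1)$ together with the non-increase of Hausdorff dimension under Lipschitz maps. I would also remark that the case $i=1$ is already handled by the simple-connectedness lemma proved above, and that an alternative argument runs through Alexander duality, $\widetilde H_i(\Omega)\cong\check H^{\,n-i-1}(S)$, combined with the vanishing of $\check H^{\,j}(S)$ for $j$ above the topological dimension of $S$ (which does not exceed $\dim_{\mathcal H}S$) and the Hurewicz theorem; I would still favor the translation argument since it produces the vanishing of $\pi_i$ directly, without detouring through homology or needing simple-connectedness as an input.
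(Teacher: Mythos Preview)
Your proof is correct and follows essentially the same strategy as the paper: reduce via Proposition~\ref{Prop: underlying space} to $\mathbb R^n\setminus S$, extend a smoothed sphere map across a disk in $\mathbb R^n$, and then perturb by a translation chosen via the Hausdorff-dimension estimate $\dim_{\mathcal H}S+(i+1)<n$ so that the filling avoids $S$. The only cosmetic difference is that the paper localizes the translation with a cutoff function so the boundary map is kept fixed throughout, whereas you translate globally and then use a short straight-line homotopy to bring the boundary back; both variants work.
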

\begin{proof}
From Proposition \ref{Prop: underlying space} it suffices to show that
$$\pi_i(\mathbb{R}^n \setminus S) = 0\mbox{ for all }1 \leq i \leq n - 1 - \left\lfloor \frac{n}{2} \right\rfloor.$$
Given any continuous function $$f_0:\mathbb S^i\to \mathbb R^n\setminus S,$$ it follows from the density of smooth functions in continuous functions that we can find a smooth map 
$$f: \mathbb S^i \to \mathbb{R}^n \setminus S$$ representing the homotopy class $[f_0]$.  Since $\pi_i(\mathbb{R}^n) = 0$, there exists a smooth map
$$F:\bar{\mathbb B}^{i+1} \to \mathbb{R}^n\mbox{ with }F|_{\partial \mathbb B^{i+1} = \mathbb S^i} = f.$$

Now let us apply a transversality argument to show that the map $F$ can be perturbed such that its image lies in $\mathbb R^n\setminus S$. For convenience, given any constant $0<s<1$, let $\mathbb B^{i+1}_s$ denote the Euclidean ball with radius $s$ and $\mathcal A_s$ denotes the annulus $\bar{\mathbb B}^{i+1}\setminus {\mathbb B}^{i+1}_s$. From the continuity of the map $F$ and the closedness of $S$, we can choose a small constant $\varepsilon > 0$ such that 
    \[
    F(\mathcal A_{1-2\varepsilon}) \cap \mathcal{N}_\varepsilon(S) = \emptyset,
    \]
    where $\mathcal{N}_\varepsilon(S)$ denotes the $\varepsilon$-neighborhood of $S$ in $\mathbb R^n$. 
    
    Let $\eta \in C_c^\infty(\mathbb B^{i+1})$ be a cut-off function with $\eta \equiv 1$ on $\mathbb B_{1-\varepsilon}^{i+1}$ and $\eta\equiv 0$ around $\partial \mathbb B^{i+1}$. We consider the following smooth map
    \[
    H: \mathbb B^{i+1} \times \mathbb{R}^n \to \mathbb{R}^n, \, (x, v) \mapsto F(x) + \eta(x) v.
    \]
    Clearly, there is an open neighborhood $U$ of the origin in $\mathbb R^n$ such that 
    \begin{equation}\label{Eq: boundary no intersection}
        H(\mathcal A_{1-2\varepsilon}\times U)\cap S=\emptyset.
    \end{equation}
    Notice that $H$ is a submersion restricted to $\mathbb B^{i+1}_{1-\varepsilon}\times U$ since it comes from translations of $\mathbb R^n$. Denote
    $$T = H^{-1}(S) \cap (\mathbb B^{i+1}_{1-\varepsilon} \times U).$$
    Since submersions are locally canonical projection maps between Euclidean spaces, we can derive
    $$\dim_H(T)\leq \dim_H(S)+i+1.$$
    Let $\pi_2:\mathbb B^{i+1}\times \mathbb R^n\to \mathbb R^n$ denote the projection map. Then we have $$\dim_H(\pi_2(T))\leq \dim_H(T)\leq \dim_H(S)+i+1<n.$$
    From this we can take some $v_0\in U$ such that $v_0\notin\pi_2(T)$, which means
    \begin{equation}\label{Eq: interior no intersection}
        H^{-1}(S)\cap (\mathbb B^{i+1}_{1-\varepsilon}\times\{v_0\})=\emptyset. 
    \end{equation}

    Define 
    $$\tilde F(x) = F(x) + \eta(x) v_0.$$
    From \eqref{Eq: boundary no intersection}-\eqref{Eq: interior no intersection} we know that $\tilde F$ maps $\mathbb B^{i+1}$ into $\mathbb R^n\setminus S$ with $\tilde F|_{\partial \mathbb B^{i+1}=\mathbb S^i}=f$. This shows that $[f_0]=0$ in $\pi_i(\mathbb R^n\setminus S)$ and we complete the proof.
\end{proof}
 \subsection{Proof of Theorem \ref{Thm: main1}} 
    \begin{proof}[Proof of Theorem \ref{Thm: main1}]
    It follows from Proposition \ref{Prop: mass capacity}, Proposition \ref{Prop: underlying space} and Corollary \ref{Cor: vanishing}.
\end{proof}

\section{Modifications in corner case}\label{Sec: corner}

\subsection{Spin positive mass theorem with corner}
First we show that the spin positive mass theorem can be generalized for generalized asymptotically flat manifolds with corner.
\begin{proposition}\label{Prop: spin PMT with corner}
    Assume that $(M^n,g,E)$ is a spin generalized asymptotically flat manifold with corner, which has nonnegative scalar curvature. Assume that the sum of the mean curvatures of the corner on both sides with respect to the outward unit normal is nonnegative.  Then we have
    $$m(M,g,E)\geq 0,$$
    where the equality holds if and only if $(M,g)$ is smooth up to a change of the smooth structure around the corner and isometric to the Euclidean space.
\end{proposition}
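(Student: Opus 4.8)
The plan is to prove a corner version of the spin positive mass theorem by re-running, with modifications, the Callias operator argument of \cite{CZ2024} -- whose ingredients are reproduced in the lemmas of Section~\ref{Sec: rigidity} -- directly on $(M,g,E)$. The only structural change is that $g$ is now merely Lipschitz, being smooth on the closure of each of the two components $M_1,M_2$ of $M\setminus\Sigma$. Since $g$ is Lipschitz, the spinor bundle $\hat{\mathcal S}$, the connection $\widehat\nabla$ and the twisted Dirac operator $\widehat{\mathcal D}$ have $L^\infty_{loc}$ connection coefficients, so the weighted spaces $H^1_{loc}$ and $H^1_{-q^*}$, the Fredholm property of the Callias operator (with the chirality boundary condition imposed on the actual boundary $\partial U_k$ of the exhaustion, which lies where $g$ is smooth), the weighted Sobolev and Poincar\'e inequalities, the Kato inequality and weak unique continuation all persist after routine modifications; in particular the analogues of Lemma~\ref{Lem: nonnegative index}, Lemma~\ref{Lem: approximation spinor} and the flatness argument of Subsection~\ref{subsection: flatness} remain valid (with $u\equiv 1$, so that no conformal deformation is involved).

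The one substantive modification occurs in the analogue of Lemma~\ref{Lem: mass control energy}. When the Schr\"odinger-Lichnerowicz identity underlying that lemma is integrated by parts, this must be done separately over $U_k\cap M_1$ and $U_k\cap M_2$, producing -- in addition to the term on $\partial U_k$ -- an interior boundary term along $\Sigma$. Writing $\nu_1$ and $\nu_2=-\nu_1$ for the unit normals of $\Sigma$ pointing out of $M_1$ and $M_2$, the spinor $\hat s\in H^1_{loc}(M,g,\hat{\mathcal S})$ has a single well-defined trace on $\Sigma$, the tangential Dirac operators induced on $\Sigma$ from the two sides are opposite (since $\hat c(\nu_1)=-\hat c(\nu_2)$) and cancel, and only the mean-curvature piece survives. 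A careful sign check gives that this $\Sigma$-term equals $\frac{1}{2}\int_\Sigma (H_1+H_2)\,\widehat{|\hat s|}^2\,\mathrm d\sigma_g$, where $H_i$ is the mean curvature of $\Sigma$ as the boundary of $M_i$ with respect to the outward unit normal -- the spinorial incarnation of the fact that the distributional scalar curvature of $g$ carries the nonnegative singular part $(H_1+H_2)\,\mathrm d\sigma_g$ along $\Sigma$. Hence the analogue of Lemma~\ref{Lem: mass control energy} holds with the extra nonnegative summand $\frac{1}{2}\int_\Sigma (H_1+H_2)\widehat{|\hat s|}^2\,\mathrm d\sigma_g$ on its right-hand side, and the exhaustion procedure then produces, for every constant spinor $\hat s_0$, a spinor $\hat s$ asymptotically constant to $\hat s_0$ with $\widehat{\mathcal D}\hat s=0$ and $\|\widehat\nabla\hat s\|^2_{L^2(M,g,\hat{\mathcal S})}\le\frac{n-1}{2}\omega_{n-1}\,m(M,g,E)\,\|\hat s_0\|_\infty^2$. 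As the left-hand side is nonnegative, $m(M,g,E)\ge 0$.

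For the rigidity, suppose $m(M,g,E)=0$. Then the energy estimate forces every such $\hat s$ to be $\widehat\nabla$-parallel on $M\setminus\Sigma$, hence Ricci-flat and scalar-flat on each side; feeding $\hat s$ into a variant of Lemma~\ref{Lem: mass control energy} with $U=M$ (which has empty boundary, since $M\setminus E$ is unbounded) and $\psi=0$ additionally forces $(H_1+H_2)\widehat{|\hat s|}^2\equiv 0$ on $\Sigma$, and since a parallel spinor asymptotic to a nonzero constant has constant nonzero norm, $H_1+H_2\equiv 0$ on $\Sigma$. Running the construction over a basis of constant spinors and forming the associated parallel vector fields as in Subsection~\ref{subsection: flatness} yields on each $M_i$ a full $\widehat\nabla$-parallel orthonormal frame; since such parallel spinors are smooth up to $\Sigma$ from the respective side and $\hat s\in H^1_{loc}(M,g,\hat{\mathcal S})$, their traces -- and hence the frames -- agree on $\Sigma$, so the two frames glue to a globally Lipschitz orthonormal frame on $M$ that is parallel on each side. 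In this frame $g$ has constant components $\delta_{ij}$, so $g$ is flat; moreover the dual coframe consists of closed Lipschitz $1$-forms, and a primitive of it furnishes a $C^{1,1}$ coordinate chart near $\Sigma$ in which $g$ is the standard Euclidean metric -- glued to the original smooth structure away from $\Sigma$, this is the asserted change of smooth structure. Finally $(M,g)$ is now a smooth complete flat manifold carrying the asymptotically flat end $E$, hence isometric to $\mathbb R^n$; alternatively one may invoke the rigidity in the smooth spin positive mass theorem \cite{BC2005,CZ2024} for $(M,g,E)$ with $R(g)\equiv 0$ and $m=0$.

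I expect the main obstacle to be twofold. The first and principal point is to pin down rigorously the coefficient and sign of the corner term in the analogue of Lemma~\ref{Lem: mass control energy}: that the tangential contributions from the two sides of $\Sigma$ cancel, and that the surviving term is exactly $+\frac{1}{2}(H_1+H_2)$ for the outward-normal convention, so that the hypothesis makes it nonnegative -- with the opposite sign the whole argument would collapse. The second is, in the equality case, to verify that the globally Lipschitz parallel frame really does determine a smooth flat structure across the corner, i.e.\ that its dual coframe is a system of closed Lipschitz $1$-forms with $C^{1,1}$ primitives trivializing $g$; this is precisely what ``smooth up to a change of the smooth structure around the corner'' encodes. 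The reworking of the weighted Fredholm and unique continuation theory of \cite{CZ2024} for Lipschitz metrics is tedious but, since the corner sits in a compact region where the metric is a fixed bounded perturbation, involves no essential new difficulty.
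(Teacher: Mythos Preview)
Your proposal is correct and follows essentially the same route as the paper: both run the Callias/harmonic-spinor argument of \cite{CZ2024} on the Lipschitz metric, pick up the extra corner term $\tfrac12\int_\Sigma(H_1+H_2)\widehat{|\hat s|}^2$ in the integrated Schr\"odinger--Lichnerowicz identity (the paper cites \cite[p.~177]{DanLeeGeometricRelativity} for this, while you sketch the cancellation of the tangential parts directly), and in the equality case build a globally Lipschitz parallel frame whose dual coframe has $C^{1,1}$ primitives giving a local Euclidean chart. The paper adds two refinements you may wish to incorporate: it cites \cite[Lemma~3.3]{ST2002} for the continuity of the parallel spinors across $\Sigma$, and it is more careful about the ``change of smooth structure'' step, defining the new structure via Fermi collar charts and then proving, via a geodesic argument, that the local isometry $F=(f_1,\dots,f_n)$ is genuinely smooth in that structure---your version asserts this but does not verify that the $C^{1,1}$ charts are smoothly compatible with the old ones away from $\Sigma$.
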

\begin{proof}
 Notice that Lipschitz metrics are sufficient to carry out analysis based on weighted Sobolev spaces $W^{1,p}_\delta(U,g,\hat{\mathcal S})$. In particular, Lemma \ref{Lem: nonnegative index} is still true in the corner case. On the other hand,
by following the computation from \cite[page 177]{DanLeeGeometricRelativity}, we can obtain the modified mass formula
      \[
    \begin{split}
        \frac{n-1}{2}&\omega_{n-1}\cdot m(M,g,E)\cdot\|\hat s\|_{\infty}^2+\|\widehat B_\psi\hat s\|^2_{L^2(U,g,\hat{\mathcal S})}\\
        &\geq  \|\widehat\nabla\hat s\|^2_{L^2(U,g,\hat{\mathcal S})}+\int_U\theta_\psi\widehat{|\hat s|}^2\mathrm d\mu_{g}+\int_{\partial U}\eta_\psi \widehat{|\hat s|}^2\mathrm d\sigma_{g}+\int_\Sigma\beta\widehat{|\hat s|}^2\mathrm d\sigma_{g},
    \end{split}
    \]
    where $\beta$ denotes the sum of the mean curvatures of the corner $\Sigma$ on both sides with respect to the outward unit normal. By following the proof of Lemma \ref{Lem: approximation spinor} we can construct a harmonic spinor $\hat s$ asymptotically constant to $\hat s_0\neq 0$ satisfying
      \[
    \begin{split}
        \frac{n-1}{2}&\omega_{n-1}\cdot m(M,g,E)\cdot\|\hat s\|_{\infty}^2\geq  \|\widehat\nabla\hat s\|^2_{L^2(U,g,\hat{\mathcal S})}+\int_\Sigma\beta\widehat{|\hat s|}^2\mathrm d\sigma_{g}.
    \end{split}
    \]
   Recall that $\beta$ is nonnegative along $\Sigma$. Therefore, we obtain $m(M,g,E)\geq 0$, where the equality implies that $\hat s$ is a parallel spinor.
   
   In the following, we assume $m(M,g,E)=0$. As before, we can construct sufficiently many parallel spinors and use them to further construct a parallel orthonormal frame $\{V_i\}$ on $(M,g)$. From \cite[Lemma 3.3]{ST2002} we know that the parallel spinors are continuous through the corner $\Sigma$ and so does the frame $\{V_i\}$. Since $(M,g)$ is piecewise smooth, the frame $\{V_i\}$ is piecewise smooth and globally Lipschitz. Denote the dual coframe of $\{V_i\}$ by $\{\omega_i\}$. Then $\{\omega_i\}$ is also piecewise smooth and globally Lipschitz. Since the dual 1-forms $\omega_i$ are parallel, they are closed $1$-forms. From the Poincar\'e lemma we can find piecewise smooth $C^{1,1}$-functions $f_i$ such that $\mathrm df_i=\omega_i$ around any point $p$ on the corner $\Sigma$. Define $F=(f_1,f_2,\ldots,f_n)$. Then the map $F$ gives a piecewise smooth $C^{1,1}$-isometry 
   $$F:(U,g)\to F(U)\subset \mathbb R^n,$$ where $U$ is an open neighborhood of $p$.

   Notice that $M$ can be given a new smooth structure by viewing it as a smooth manifold coming from the gluing of the metric completion of $M\setminus \Sigma$ along $\Sigma$, where the collar neighborhoods are chosen to be Fermi coordinate charts with respect to the metric $g$. Let us show that the map $F$ is smooth under this new choice of smooth structure. Notice that the map $F:(U,g)\to F(U)\subset \mathbb R^n$ remains a piecewise smooth $C^{1,1}$-isometry. Since $\Sigma$ is separating, we can pick a global unit normal vector field $\nu$ on $\Sigma$. Fix a Fermi coordinate chart
   $$\Phi:V\times (-\varepsilon,\varepsilon)\to M,\,(q,s)\mapsto \exp_q(s\nu),$$
   where $V$ is an open neighborhood of $p$ on $\Sigma$.
   Using the metric expression in the Fermi coordinate,  we see that if the neighborhood $V$ and the constant $\varepsilon$ are small enough, then there is a unique unit-speed minimizing geodesic 
   $$\gamma:[-\varepsilon,\varepsilon]\to(U,g),\,s\mapsto \exp_q(s\nu),$$
   connecting $\exp_q(-\varepsilon\nu)$ to $\exp_q(\varepsilon\nu)$ for any $q\in V$. Since any pair of points in the Euclidean space determine a unique unit-speed minimizing geodesic, we conclude that $F\circ\gamma$ is the unique unit-speed minimizing geodesic connecting $F(\gamma(-\varepsilon))$ and $F(\gamma(\varepsilon))$ given by
   \begin{equation}\label{Eq: line connection}
       (F\circ\gamma)(s)=(F\circ\gamma)(-\epsilon)+(s+\epsilon)F_*(\gamma'(-\epsilon)).
   \end{equation}
   Denote 
   $$\Sigma_{-\varepsilon}=\{\exp_{q}(-\varepsilon\nu):q\in V\}\mbox{ and }S_{-\epsilon}=F(\Sigma_{-\epsilon}).$$
   Let $\nu_{-\varepsilon}$ denote the unit normal vector field on $\Sigma_{-\varepsilon}$ pointing to $\Sigma$. Recall that $F$ is smooth away from the corner. Therefore, $S_{-\varepsilon}$ is a smooth hypersurface in the Euclidean space and $F_*(\nu_{-\varepsilon})$ is a smooth unit normal vector field on $S_{-\varepsilon}$. From the previous discussion and the characterization for focal radius, we see that the map
   $$\Psi:S_{-\varepsilon}\times [0,2\varepsilon)\to \mathbb R^n,\,(x,\tau)\mapsto x+\tau F_*(\nu_{-\varepsilon}),$$
   is smooth. From \eqref{Eq: line connection} we can write
   $$(F\circ\Phi)(q,s)=\Psi(F(\exp_q(-\varepsilon\nu)),s+\varepsilon),$$
   which is a composition of smooth maps. In particular, the map $F$ is smooth around $p$ under the new choice of smooth structure. 

   From above discussion we know that $g$ turns out to be a smooth flat metric on $M$ with respect to the new smooth structure. The volume comparison theorem and the asymptotical flatness of $g$ yield that $(M,g)$ is isometric to the Euclidean space.
\end{proof}

\subsection{Proof of Theorem \ref{Thm: main2} and Corollary \ref{Cor: Bray}}

\begin{proof}[Proof of Theorem \ref{Thm: main2} ]
    By repeating the proof of Corollary \ref{Cor: function u} we are able to construct a positive weakly harmonic $C^{1,\alpha}$-function $u$ on $(M,g)$ such that we have the expansion 
    $$u(x)=1-\mathfrak c(M,g,E)\cdot|x|^{2-n}+w,$$
    with $w=O_2(|x|^{2-n-\tau})$ as $x\to\infty$. For any constant $\varepsilon>0$, the conformal manifold
    $(M,g_\varepsilon,E)$ with $$g_\varepsilon=\left(\frac{u+\varepsilon}{1+\varepsilon}\right)^{\frac{4}{n-2}}g$$ is a spin generalized asymptotically flat manifold with corner satisfying the assumption of Proposition \ref{Prop: spin PMT with corner}. As a consequence, we have
    $$0\leq m(M,g_\varepsilon,E)=m(M,g,E)-2(1+\varepsilon)^{-1}\mathfrak c(M,g,E).$$
    Letting $\varepsilon\to 0$, we obtain the desired mass-capacity inequality
    $$m(M,g,E)\geq 2\mathfrak c(M,g,E).$$
    If the equality holds, then the same argument in the proof of Proposition \ref{Prop: spin PMT with corner} yields that $(M,\bar g)$ with $\bar g=u^{\frac{4}{n-2}}g$ is smooth and flat up to a change of the smooth structure. Notice that $u^{-1}$ is a weakly harmonic function on $(M,\bar g)$. We conclude that $u$ is smooth and so $g$ is also smooth. Now we return to the smooth case and the desired conclusions come from Theorem \ref{Thm: main1}.
\end{proof}

\begin{proof}[Proof of Corollary \ref{Cor: Bray} directly from Theorem \ref{Thm: main2}]
    Let $(\widehat M,\widehat g)$ denote the metric double of $(M,g)$ and $\widehat E$ denote a lift of $E$ in $\widehat M$. Then $(\widehat M,\widehat g,\widehat E)$ is a spin generalized asymptotically flat manifold with corner satisfying all the assumption of Theorem \ref{Thm: main2}. In particular, we have
    $$m(M,g,E)=m(\widehat M,\widehat g,\widehat E)\geq 2\mathfrak c(\widehat M,\widehat g,\widehat E).$$
    In the following, let us show
    \begin{equation}\label{Eq: capacity relations}
        \mathfrak c(\widehat M,\widehat g,\widehat E)= \frac{1}{2}m(M,g,\Sigma).
    \end{equation}
    By minimizing the energy we can construct a harmonic function $u$ on $(M,g)$ with $u=0$ on $\Sigma$ and $u(x)\to 1$ as $x\to\infty$ such that
    $$\frac{1}{n(n-2)\omega_n}\int_M|\nabla_gu|^2\,\mathrm d\mu_g=\mathfrak 
c(M,g,\Sigma).$$
Let $\widehat M_+$ denote the copy of $M$ in $\widehat M$ containing the end $\widehat E$ and $\widehat M_-$ denote another copy. Let $\pi:\widehat M\to M$ denote the canonical projection map. Define
\[
\hat u(\hat x)=
\begin{cases}
\,\,u(\pi(x)), & \hat x\in \widehat M_+,\\
-u(\pi(\hat x)), & \hat x\in \widehat M_-.
\end{cases}
\]
Then $(\hat u+1)/2$ is the harmonic function attaining the capacity $\mathfrak c(\widehat M,\widehat g,\widehat E)$. This gives \eqref{Eq: capacity relations} and so we have
$$m(M,g,E)\geq \mathfrak c(M,g,\Sigma).$$

If the equality holds, then $(\widehat M,\hat g,\widehat E)$ also attains the equality of the mass-capacity inequality. So $(\widehat M,\hat g)$ is harmonically conformal to an open subset $U\subset\mathbb R^n$ with $\mathbb R^n\setminus U$ bounded and closed. The $\mathbb Z_2$-isometry of $(\widehat M,\hat g)$ induces a conformal involution $\rho:U\to U$. Let $\widehat \Sigma$ denote the intersection of $\widehat M_+$ and $\widehat M_-$. Then $\widehat \Sigma$ is embedded into $\mathbb R^n$ as a closed smooth hypersurface $T$, which consists of fixed points of $\rho$. Recall that   the Liouville theorem \cite[Theorem 5.2]{dC} yields that $\rho$ is a composition of isometries, dilatations or inversions, at most one of each. In the following, we divide the discussion into two cases:

If there is no inversion, then there is also no dilatation since $\rho^2$ is the identity and the volume element should be kept. Therefore, $\rho$ is simply an isometry. Recall that $\rho$ is a non-trivial involution, so we can find two points $p\neq q$ such that $\rho$ maps $p$ to $q$ and vice versa. In particular, $\rho$ keeps the middle point of $p$ and $q$, which we take as the origin up to a change of coordinate.
Let $P$ denote the perpendicular bisector of points $p$ and $q$. Then $\rho$ is the direct sum of reflection along the line $\overline{pq}$ and an involution on $P$. By induction, we can decompose $\mathbb R^n=V\oplus W$ such that $\rho=\rho_V\oplus\rho_W$ where $\rho_V$ is the antipodal map and $\rho_W$ is the identity. In particular, either $\rho$ has non-compact fixed points or it has a unique fixed point, which leads to a contradiction. 

If there is an inversion, then we know that $\widehat M_+$ and $\widehat M_-$ are mapped to the outer region of $T$ and the inner region of $T$ with a single point $q_*$ removed up to an order. In particular, $(\widehat M,\hat g)$ is harmonically conformal to $\mathbb R^n\setminus\{q_*\}$. Since any positive harmonic function on $\mathbb R^n\setminus\{q_*\}$ has the form
$$\alpha+\beta |x-q_*|^{2-n},$$
where $\alpha$ and $\beta$ are two nonnegative constants not equal to zero at the same time. Since $(\widehat M,\hat g)$ is complete, both $\alpha$ and $\beta$ for the harmonic conformal factor are positive. Then $(\widehat M,\hat g)$ is isometric to the whole spatial Schwarzschild manifold and $\widehat\Sigma$ is the unique minimal hypersurface. This yields that $(M,g)$ is isometric to the half spatial Schwarzschild manifold.
\end{proof}


\appendix
\section{Some computations in spin geometry}\label{App: A}
Let $(M^n,g)$ be a Riemannian $n$-manifold. Fix an orthonormal frame $\{e_i\}$ with respect to the metric $g$ and define the connection $1$-form
$$\omega_{ij}=g(\nabla e_i,e_j),$$
where $\nabla$ is the Levi-Civita connection of the metric $g$.
\begin{lemma}\label{Lem: connection form}
    We have
    \[
    \begin{split}
    2\omega_{ij}(X)=&e_i g(X,e_j)-e_jg(X,e_i)\\
    &+g([X,e_i],e_j)-g([X,e_j],e_i)-g([e_i,e_j],X).
    \end{split}\]
\end{lemma}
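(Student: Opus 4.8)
The plan is to deduce the formula as the orthonormal-frame specialization of the Koszul formula for the Levi-Civita connection, using only the two properties that characterize $\nabla$: metric compatibility, $Zg(X,Y)=g(\nabla_ZX,Y)+g(X,\nabla_ZY)$, and the vanishing of torsion, $\nabla_XY-\nabla_YX=[X,Y]$. First I would write metric compatibility for the three cyclic rearrangements of the triple $(X,e_i,e_j)$ — that is, expand $Xg(e_i,e_j)$, $e_ig(e_j,X)$, and $e_jg(X,e_i)$ — and form the combination (first) $+$ (second) $-$ (third). Using the torsion-free identity to replace differences such as $\nabla_Xe_i-\nabla_{e_i}X$ by the corresponding brackets, the six covariant-derivative terms on the right recombine into $2g(\nabla_Xe_i,e_j)$ together with the three bracket terms, yielding the Koszul formula
\[
2g(\nabla_Xe_i,e_j)=Xg(e_i,e_j)+e_ig(X,e_j)-e_jg(X,e_i)+g([X,e_i],e_j)-g([X,e_j],e_i)-g([e_i,e_j],X);
\]
I would treat this step as routine bookkeeping rather than write it out term by term.

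The only remaining, and genuinely geometric, input is that $\{e_i\}$ is a $g$-orthonormal frame, so $g(e_i,e_j)=\delta_{ij}$ is a locally constant function and hence $Xg(e_i,e_j)=0$. Substituting this into the displayed identity and recalling the definition $\omega_{ij}(X)=g(\nabla_Xe_i,e_j)$ gives precisely the asserted expression for $2\omega_{ij}(X)$.

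I do not expect any real obstacle here: the statement is a classical identity, and the only point requiring a little care is to match the conventions — the definition puts $\nabla e_i$ in the first slot and $e_j$ in the second, so one must feed $(X,e_i,e_j)$ into the Koszul formula in that order, which is exactly what makes the vanishing term equal to $Xg(e_i,e_j)$ and the surviving terms reproduce the right-hand side with the signs as stated. This lemma is then what is needed to compute the connection $1$-forms under a conformal change of metric, and hence to verify the transformation rule \eqref{Eq: connection relation} for the spinor connection used throughout Section \ref{Sec: rigidity}.
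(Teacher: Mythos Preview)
Your proposal is correct and matches the paper's approach exactly: the paper's proof is simply ``This follows directly from the Koszul formula,'' and you have supplied the routine details of writing out Koszul and then using $Xg(e_i,e_j)=0$ for an orthonormal frame.
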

\begin{proof}
    This follows directly from the Koszul formula.
\end{proof}
Let $\tilde g=e^{2u}g$ be a conformal metric of $g$. Define $\tilde e_i=e^{-u}e_i$. Then $\{\tilde e_i\}$ is an orthonormal frame with respect to the metric $\tilde g$. Correspondingly, we can define the connection $1$-form
$$\tilde \omega_{ij}=\tilde g(\tilde \nabla \tilde e_i,\tilde e_j),$$
where $\tilde\nabla$ denotes the Levi-Civita connection of the metric $\tilde g$.
\begin{lemma}\label{Lem: conformal connection}
    We have 
    $$\tilde \omega_{ij}(X)=\omega_{ij}(X)+e_i(u)g(X,e_j)-e_j(u)g(X,e_i).$$
\end{lemma}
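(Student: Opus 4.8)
The plan is to compute $\tilde\omega_{ij}(X)$ by feeding the frame $\{\tilde e_i\}=\{e^{-u}e_i\}$ and the conformal metric $\tilde g=e^{2u}g$ into the Koszul-type formula of Lemma \ref{Lem: connection form}, and then to simplify using the relation $\tilde g(\,\cdot\,,\,\cdot\,)=e^{2u}g(\,\cdot\,,\,\cdot\,)$ together with $[\tilde e_i,\tilde e_j]$ expanded via the Leibniz rule. Concretely, I would start from
\[
2\tilde\omega_{ij}(X)=\tilde e_i\,\tilde g(X,\tilde e_j)-\tilde e_j\,\tilde g(X,\tilde e_i)
+\tilde g([X,\tilde e_i],\tilde e_j)-\tilde g([X,\tilde e_j],\tilde e_i)-\tilde g([\tilde e_i,\tilde e_j],X),
\]
and substitute $\tilde g(X,\tilde e_j)=e^{2u}g(X,e^{-u}e_j)=e^{u}g(X,e_j)$, so that $\tilde e_i\,\tilde g(X,\tilde e_j)=e^{-u}e_i\bigl(e^{u}g(X,e_j)\bigr)=e_i(u)g(X,e_j)+e_i\bigl(g(X,e_j)\bigr)$, and similarly for the second term.

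For the bracket terms I would use $[X,\tilde e_i]=[X,e^{-u}e_i]=e^{-u}[X,e_i]-X(u)e^{-u}e_i$, so that $\tilde g([X,\tilde e_i],\tilde e_j)=e^{2u}\cdot e^{-u}g\bigl(e^{-u}[X,e_i]-X(u)e^{-u}e_i,\,e_j\bigr)=g([X,e_i],e_j)-X(u)g(e_i,e_j)$; the $X(u)g(e_i,e_j)$ contributions cancel between the two such terms by antisymmetry in $i,j$. For the last term, $[\tilde e_i,\tilde e_j]=e^{-2u}[e_i,e_j]+e^{-2u}\bigl(e_i(u)e_j-e_j(u)e_i\bigr)$ (a short computation from the product rule), whence $\tilde g([\tilde e_i,\tilde e_j],X)=g([e_i,e_j],X)+e_i(u)g(e_j,X)-e_j(u)g(e_i,X)$. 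Assembling all pieces, the terms $e_i\bigl(g(X,e_j)\bigr)-e_j\bigl(g(X,e_i)\bigr)+g([X,e_i],e_j)-g([X,e_j],e_i)-g([e_i,e_j],X)$ reconstitute $2\omega_{ij}(X)$ by Lemma \ref{Lem: connection form}, and the leftover terms are $2e_i(u)g(X,e_j)-2e_j(u)g(X,e_i)$, giving the claimed identity after dividing by $2$.

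This is a routine but bookkeeping-heavy calculation, so the only real obstacle is the careful tracking of which $e^{\pm u}$ factors appear and ensuring the spurious $X(u)g(e_i,e_j)$ terms cancel by antisymmetry; once the bracket expansions $[X,e^{-u}e_i]$ and $[e^{-u}e_i,e^{-u}e_j]$ are written out correctly, everything collapses cleanly. I would present the computation in three grouped blocks — the two directional-derivative terms, the two $[X,\cdot]$ terms, and the $[\tilde e_i,\tilde e_j]$ term — and then invoke Lemma \ref{Lem: connection form} to identify the surviving $g$-part with $2\omega_{ij}(X)$.
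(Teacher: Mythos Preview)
Your approach is exactly the one the paper takes: apply Lemma \ref{Lem: connection form} to the tilded data, expand each of the five terms using $\tilde g=e^{2u}g$ and $\tilde e_i=e^{-u}e_i$, and recognise $2\omega_{ij}(X)$ in the sum. The grouping into three blocks is also how the paper organises the computation.

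There is, however, a sign slip in your expansion of $[\tilde e_i,\tilde e_j]$. From $[fX,gY]=fg[X,Y]+fX(g)Y-gY(f)X$ with $f=g=e^{-u}$ one gets
\[
[\tilde e_i,\tilde e_j]=e^{-2u}[e_i,e_j]\;-\;e^{-2u}e_i(u)\,e_j\;+\;e^{-2u}e_j(u)\,e_i,
\]
since $e_i(e^{-u})=-e^{-u}e_i(u)$; you wrote the last two terms with the opposite sign. With your (incorrect) sign, the $u$-contributions from the fifth term would exactly cancel those from the first two directional-derivative terms, and you would end up with $\tilde\omega_{ij}=\omega_{ij}$ rather than the claimed identity. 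Once this sign is corrected the assembly goes through precisely as you describe, and you recover $2e_i(u)g(X,e_j)-2e_j(u)g(X,e_i)$ as the leftover.
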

\begin{proof}
    From Lemma \ref{Lem: connection form} we have
    \[
    \begin{split}
    2\tilde\omega_{ij}(X)=&\tilde e_i \tilde g(X,\tilde e_j)-\tilde e_j\tilde g(X,\tilde e_i)\\
    &+\tilde g([X,\tilde e_i],\tilde e_j)-\tilde g([X,\tilde e_j],\tilde e_i)-\tilde g([\tilde e_i,\tilde e_j],X).
    \end{split}\]
    Using the facts $\tilde g=e^{2u}g$ and $\tilde e_i=e^{-u}e_i$, we can compute
    $$\tilde e_i\tilde g(X,\tilde e_j)=e_i(u)g(X,e_j)+e_i g(X,e_j),$$
    $$\tilde e_j\tilde g(X,\tilde e_i)=e_j(u)g(X,e_i)+e_j g(X,e_i),$$
    $$\tilde g([X,\tilde e_i],\tilde e_j)=-X(u)g(e_i,e_j)+g([X,e_i],e_j),$$
    $$\tilde g([X,\tilde e_j],\tilde e_i)=-X(u)g(e_j,e_i)+g([X,e_j],e_i),$$
    and
    $$\tilde g([\tilde e_i,\tilde e_j],X)=-e_i(u)g(X,e_j)+e_j(u)g(X,e_i)+g([e_i,e_j],X).$$
    Then we have
    $$2\tilde \omega_{ij}(X)=2\omega_{ij}(X)+2(e_i(u)g(X,e_j)-e_j(u)g(X,e_i)).$$
    This completes the proof.
\end{proof}
Now we assume that $M$ is spin. Then we can take a spin structure 
$$\pi:P_{spin}\to P_{SO}(M,g).$$
Let $\rho:spin(n)\to End(\Delta_n)$ be the complex spin representation, and the spinor bundle $\mathcal S$ is defined to be $\mathcal S=P_{spin}\times_\rho \Delta_n$. Recall that we have the Clifford multiplication
$$c:Cl(T_xM,g_x)\to End(\mathcal S_x).$$
With the spin structure the Levi-Civita connection of the metric $g$ naturally induces a spinor connection on the spinor bundle $\mathcal S$, still denoted by $\nabla$.
\begin{lemma}\label{Lem: derivative from connection}
    We have
    $$\nabla_Xs=\frac{1}{4}\omega_{ij}(X)c(e_i)c(e_j)s.$$
\end{lemma}
\begin{proof}
    This follows from the Lie algebra isomorphism
    $$\mathrm{ad}:\mathfrak{spin}_n\to \mathfrak{so}_n,\,\frac{1}{4}A_{ij}e_ie_j\mapsto (A_{ij})_{n\times n},$$
    where $(A_{ij})_{n\times n}$ is any skew-symmetric matrix.
\end{proof}
Recall that there is a Hermitian metric $\langle\cdot,\cdot\rangle$ on the spinor bundle $\mathcal S$ such that
\begin{itemize}
    \item for any $g$-unit vector $e$ we have
    $$\langle c(e)s_1,c(e)s_2\rangle=\langle s_1,s_2\rangle,$$
    \item and we have
    $$X\langle s_1,s_2\rangle=\langle \nabla_Xs_1,s_2\rangle+\langle s_1,\nabla_Xs_2 \rangle.$$
\end{itemize}
Notice that the frame bundles $P_{SO}(M,g)$ and $P_{SO}(M,\tilde g)$ are isomorphic through the following natural correspondence
$$\{e_i\}\in P_{SO}(M,g)\leftrightarrow \left\{\tilde e_i=\frac{e_i}{|e_i|_{\tilde g}}\right\}\in P_{SO}(M,\tilde g).$$
Therefore, we can take the same spin structure $\tilde \pi:P_{spin}\to P_{SO}(M,\tilde g)$ and the spinor bundle $\mathcal S$. The Clifford multiplication will be denoted by 
$$\tilde c:Cl(T_xM,\tilde g_x)\to End(\mathcal S_x).$$
\begin{lemma}\label{lem:conformal clifford representation}
    We have
    $$\tilde c(\tilde e_i)=c(e_i).$$
    In particular, for any $\tilde g$-unit vector $\tilde e$ we have 
    $$\langle \tilde c(\tilde e)s_1,\tilde c(\tilde e)s_2\rangle=\langle s_1,s_2\rangle.$$
\end{lemma}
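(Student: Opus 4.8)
The plan is to unwind the associated-bundle definitions of the two Clifford multiplications and to exploit the fact that we have deliberately used the \emph{same} spin structure, together with the canonical identification $P_{SO}(M,\tilde g)\cong P_{SO}(M,g)$ sending $\{\tilde e_i\}$ to $\{e_i\}$. Under this set-up both $c$ and $\tilde c$ are obtained by transporting one fixed model Clifford multiplication on $\Delta_n$ along the \emph{same} point of $P_{spin}$, so the identity $\tilde c(\tilde e_i)=c(e_i)$ is essentially forced; the Hermitian statement then drops out by linearity and the compatibility of $\langle\cdot,\cdot\rangle$ with $c$.

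First I would recall the construction. Fix $x\in M$ and a point $\tilde p\in P_{spin}$ over $x$, and let $p=\pi(\tilde p)$; as an element of $P_{SO}(M,g)$ this is the frame $\{e_i\}$, and under $P_{SO}(M,g)\cong P_{SO}(M,\tilde g)$ it is the frame $\{\tilde e_i\}$. Let $c_0$ denote Clifford multiplication by the standard orthonormal basis $\{f_i\}$ of $\mathbb R^n$ on $\Delta_n$, which is $\mathrm{Spin}(n)$-equivariant with respect to $\rho$; this equivariance is exactly what makes the formulas below independent of the chosen representative $\tilde p$. Then, for $v\in T_xM$ expressed as $v=v^ie_i=\tilde v^i\tilde e_i$, the definitions read
$$c(v)\,[\tilde p,\xi]=[\tilde p,\,c_0(v^if_i)\xi]\qquad\text{and}\qquad \tilde c(v)\,[\tilde p,\xi]=[\tilde p,\,c_0(\tilde v^if_i)\xi],$$
where $[\tilde p,\xi]$ denotes the class of $(\tilde p,\xi)$ in $\mathcal S_x=P_{spin}\times_\rho\Delta_n$. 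Taking $v=e_j$ in the first formula and $v=\tilde e_j$ in the second, both right-hand sides become $[\tilde p,c_0(f_j)\xi]$, so $\tilde c(\tilde e_i)=c(e_i)$ on $\mathcal S_x$; since $x$ and $\tilde p$ were arbitrary, this holds on all of $M$.

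For the second claim I would write a $\tilde g$-unit vector as $\tilde e=a^i\tilde e_i$ with $\sum_i(a^i)^2=1$, observe that $e:=a^ie_i$ is then a $g$-unit vector, and use linearity of Clifford multiplication together with the first part to get $\tilde c(\tilde e)=a^i\tilde c(\tilde e_i)=a^ic(e_i)=c(e)$. The stated identity then follows at once from the compatibility $\langle c(e)s_1,c(e)s_2\rangle=\langle s_1,s_2\rangle$ for $g$-unit vectors $e$.

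I do not expect a genuine obstacle here. The only point that deserves explicit attention is the bookkeeping: that the \emph{same} point $\tilde p\in P_{spin}$, and hence the same identification $\mathcal S_x\cong\Delta_n$, underlies both Clifford actions — this is precisely the content of "taking the same spin structure $P_{spin}\to P_{SO}(M,\tilde g)\cong P_{SO}(M,g)$" — together with the $\mathrm{Spin}(n)$-equivariance of $c_0$ needed to legitimize the associated-bundle formulas above.
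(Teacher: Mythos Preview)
Your proof is correct and follows essentially the same approach as the paper: choose a point $p\in P_{spin}$ projecting to both frames $\{e_i\}$ and $\{\tilde e_i\}$ via the two structure maps, and observe that both Clifford actions are then induced from the same model multiplication on $\Delta_n$. The paper's version is more terse and leaves the second claim implicit, whereas you spell it out via linearity; both are the same argument.
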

\begin{proof}
    Clearly we can take $p\in P_{spin}$ such that $\pi(p)=\{e_i\}$ and $\tilde \pi(p)=\{\tilde e_i\}$. Then for any $v\in \mathbb R^n$ and $s=(p,\varphi)\in \mathcal S$ with $\varphi\in \Delta_n$ we have
    $$\tilde c(v^i\tilde e_i)s=(p,v\cdot \varphi)=c(v^ie_i)s.$$
    This completes the proof.
\end{proof}
Let $\tilde \nabla$ denote the Levi-Civita connection of $(M,\tilde g)$ as well as the induced spinor connection on the spinor bundle $\mathcal S$.
\begin{lemma}
    We have
    $$\tilde \nabla_Xs=\nabla_Xs+\frac{1}{2}c(\nabla u)c(X)s+\frac{1}{2}g(\nabla u,X)s.$$
\end{lemma}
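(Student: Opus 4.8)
The plan is to obtain the identity by a purely algebraic manipulation combining Lemmas \ref{Lem: conformal connection}, \ref{Lem: derivative from connection} and \ref{lem:conformal clifford representation}. First I would apply Lemma \ref{Lem: derivative from connection} to the metric $\tilde g$ together with its orthonormal frame $\{\tilde e_i\}$, which gives
$$\tilde\nabla_X s=\tfrac14\,\tilde\omega_{ij}(X)\,\tilde c(\tilde e_i)\tilde c(\tilde e_j)s .$$
By Lemma \ref{lem:conformal clifford representation} we may replace $\tilde c(\tilde e_i)$ by $c(e_i)$, so the right-hand side becomes $\tfrac14\tilde\omega_{ij}(X)c(e_i)c(e_j)s$, now written entirely in terms of the $g$-Clifford multiplication. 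Next I would insert the conformal transformation rule from Lemma \ref{Lem: conformal connection}, namely $\tilde\omega_{ij}(X)=\omega_{ij}(X)+e_i(u)g(X,e_j)-e_j(u)g(X,e_i)$. The $\omega_{ij}(X)$-part reproduces exactly $\nabla_X s$ by Lemma \ref{Lem: derivative from connection}, and what remains is the correction term
$$\tfrac14\big(e_i(u)g(X,e_j)-e_j(u)g(X,e_i)\big)c(e_i)c(e_j)s ,$$
with the usual summation convention on $i,j$.

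The only step requiring genuine care — and the only place a sign could slip — is the simplification of this correction term. Relabelling $i\leftrightarrow j$ in the second summand shows it equals $\tfrac14\,e_i(u)g(X,e_j)\big(c(e_i)c(e_j)-c(e_j)c(e_i)\big)s$. The Clifford relation $c(e_i)c(e_j)+c(e_j)c(e_i)=-2\delta_{ij}$ (in the paper's convention $c(v)^2=-|v|^2$, consistent with the $+R/4$ in the Schr\"odinger--Lichnerowicz formula used earlier) converts the bracket into $2c(e_i)c(e_j)+2\delta_{ij}$, so the correction splits as
$$\tfrac12\,e_i(u)g(X,e_j)c(e_i)c(e_j)s+\tfrac12\,e_i(u)g(X,e_i)s .$$
Finally, writing $\nabla u=e_i(u)e_i$ and $X=g(X,e_j)e_j$ in the orthonormal frame, one recognises $e_i(u)c(e_i)=c(\nabla u)$, $g(X,e_j)c(e_j)=c(X)$ and $e_i(u)g(X,e_i)=g(\nabla u,X)$, which collapses the expression to $\tfrac12 c(\nabla u)c(X)s+\tfrac12 g(\nabla u,X)s$. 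This is the asserted formula.

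I do not expect any real obstacle: the computation is short and elementary. The subtle point is purely bookkeeping — one must keep the off-diagonal antisymmetrisation separate from the diagonal $\delta_{ij}$-contribution of the Clifford relation, since it is precisely that diagonal term which produces the scalar summand $\tfrac12 g(\nabla u,X)s$, the feature distinguishing the conformal change of the spinor connection from a naive action of $c(\nabla u)c(X)$. As a consistency check, one can verify that the added terms are compatible with the Hermitian metric: since $c(v)$ is skew-Hermitian, the adjoint of $c(\nabla u)c(X)$ is $c(X)c(\nabla u)$, and $c(\nabla u)c(X)+c(X)c(\nabla u)=-2g(\nabla u,X)$, so the contributions of $\tfrac12 c(\nabla u)c(X)s$ and $\tfrac12 g(\nabla u,X)s$ to the Leibniz rule for $\langle\cdot,\cdot\rangle$ cancel exactly.
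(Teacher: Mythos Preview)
Your proposal is correct and follows essentially the same approach as the paper: both start from $\tilde\nabla_X s=\tfrac14\tilde\omega_{ij}(X)\tilde c(\tilde e_i)\tilde c(\tilde e_j)s$, use Lemmas \ref{lem:conformal clifford representation} and \ref{Lem: conformal connection} to reduce to the $g$-data, and then simplify the correction term via the Clifford relation. The only cosmetic difference is that the paper first restricts the sum to $i\neq j$ and then adds back the diagonal contribution, whereas you keep the full sum and let the $\delta_{ij}$ from the Clifford relation produce the scalar term directly.
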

\begin{proof}
    Using Lemma \ref{Lem: conformal connection}, Lemma \ref{Lem: derivative from connection} and Lemma \ref{lem:conformal clifford representation} we have
    \[
    \begin{split}
        \tilde \nabla_Xs&=\frac{1}{4}\tilde \omega_{ij}(X)\tilde c(\tilde e_i)\tilde c(e_j)s\\
        &=\frac{1}{4}\omega_{ij}(X)c(e_i)c(e_j)s+\frac{1}{4}[e_i(u)g(X,e_j)-e_j(u)g(X,e_i)]c(e_i)c(e_j)s\\
        &=\nabla_Xs+\frac{1}{2}\sum_{i\neq j}e_i(u)g(X,e_j)c(e_i)c(e_j)s.
    \end{split}
    \]
    The last term equals
    $$\frac{1}{2}c(\nabla u)c(X)s+\frac{1}{2}g(\nabla u,X)s,$$
    and we complete the proof.
\end{proof}
\begin{corollary}
    We have
    $$X\langle s_1,s_2\rangle=\langle \tilde\nabla_Xs_1,s_2\rangle+\langle s_1,\tilde\nabla_Xs_2 \rangle.$$
\end{corollary}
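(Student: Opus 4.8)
The plan is to substitute the explicit formula relating $\tilde\nabla$ and $\nabla$ from the preceding lemma, namely
$$\tilde\nabla_Xs=\nabla_Xs+\tfrac12 c(\nabla u)c(X)s+\tfrac12 g(\nabla u,X)s,$$
into the right-hand side of the desired identity. Writing $A_Xs:=\tfrac12 c(\nabla u)c(X)s+\tfrac12 g(\nabla u,X)s$ for the zeroth-order correction, the compatibility of the Hermitian metric with $\nabla$ already gives $X\langle s_1,s_2\rangle=\langle\nabla_Xs_1,s_2\rangle+\langle s_1,\nabla_Xs_2\rangle$, so the whole statement reduces to checking that $A_X$ is skew-Hermitian, i.e.
$$\langle A_Xs_1,s_2\rangle+\langle s_1,A_Xs_2\rangle=0.$$

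To verify this I would first record that the compatibility property $\langle c(e)s_1,c(e)s_2\rangle=\langle s_1,s_2\rangle$ for $g$-unit vectors $e$, together with the Clifford relation $c(e)^2=-1$, forces $c(v)$ to be skew-Hermitian for every real tangent vector $v$, namely $\langle c(v)s_1,s_2\rangle=-\langle s_1,c(v)s_2\rangle$. Applying this twice and then using the Clifford relation $c(X)c(\nabla u)+c(\nabla u)c(X)=-2g(X,\nabla u)$, one obtains
$$\langle c(\nabla u)c(X)s_1,s_2\rangle=\langle s_1,c(X)c(\nabla u)s_2\rangle=-2g(X,\nabla u)\langle s_1,s_2\rangle-\langle s_1,c(\nabla u)c(X)s_2\rangle.$$
Hence $\tfrac12\langle c(\nabla u)c(X)s_1,s_2\rangle+\tfrac12\langle s_1,c(\nabla u)c(X)s_2\rangle=-g(X,\nabla u)\langle s_1,s_2\rangle$, which exactly cancels the term $g(\nabla u,X)\langle s_1,s_2\rangle$ produced by the two scalar pieces $\tfrac12 g(\nabla u,X)$. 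This gives $\langle A_Xs_1,s_2\rangle+\langle s_1,A_Xs_2\rangle=0$, and the identity follows.

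The computation is entirely routine; the only point requiring care is bookkeeping with sign conventions — the sign in the Clifford relation $c(v)c(w)+c(w)c(v)=-2g(v,w)$ and the resulting skew-adjointness (rather than self-adjointness) of Clifford multiplication — together with the observation that $g(\nabla u,X)=g(X,\nabla u)$ is real, so the conjugate-linearity of $\langle\cdot,\cdot\rangle$ in one slot does not interfere with the cancellation of the scalar terms.
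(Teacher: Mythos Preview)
Your proof is correct and follows essentially the same approach as the paper: both use the skew-Hermitian property $\langle c(\nabla u)c(X)s_1,s_2\rangle=\langle s_1,c(X)c(\nabla u)s_2\rangle$ together with the Clifford relation $c(\nabla u)c(X)+c(X)c(\nabla u)+2g(\nabla u,X)=0$ to see that the correction term is skew-Hermitian. You have simply filled in the computational details that the paper omits.
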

\begin{proof}
    This follows directly from
    $$\langle c(\nabla u)c(X)s_1,s_2\rangle =\langle s_1,c(X)c(\nabla u)s_2\rangle$$
    and
    $$c(\nabla u)c(X)+c(X)c(\nabla u)+2g(\nabla u,X)=0.$$
We omit the computational details.
\end{proof}

\printbibliography
\end{document}